\documentclass[hidelinks,onefignum,onetabnum]{siamart220329}


\usepackage{bm}
\usepackage{bbm}
\usepackage{algorithm}  
\usepackage{algorithmic}
\usepackage{hyperref}
\usepackage{subfigure}
\usepackage{graphicx}

\numberwithin{equation}{section}

\usepackage{cite}
\usepackage{booktabs}
\usepackage{makecell}
\usepackage{multirow}
\numberwithin{figure}{section}
\numberwithin{table}{section}
\usepackage{float}

\newtheorem{assumption}{Assumption}[section]

\usepackage{lipsum}
\usepackage{amsfonts}
\usepackage{graphicx}
\usepackage{epstopdf}
\usepackage{algorithmic}

\ifpdf
  \DeclareGraphicsExtensions{.eps,.pdf,.png,.jpg}
\else
  \DeclareGraphicsExtensions{.eps}
\fi


\newsiamremark{remark}{Remark}
\newsiamremark{hypothesis}{Hypothesis}
\crefname{hypothesis}{Hypothesis}{Hypotheses}
\newsiamthm{claim}{Claim}

\headers{Spectral analysis of a mixed method for linear elasticity}{Xiang Zhong and Weifeng Qiu}

\title{Spectral analysis of a mixed method for linear elasticity\thanks{Submitted to the editors DATE.
\funding{Xiang Zhong and Weifeng Qiu's research was partially supported by the Research
	Grants Council of the Hong Kong Special Administrative Region, China. (Project No. CityU 11302219).}}}

\author{Xiang Zhong\thanks{Department of Mathematics, City University of Hong Kong, Kowloon Tong, Hong Kong SAR, China
  (\email{xianzhong5-c@my.cityu.edu.hk},\email{weifeqiu@cityu.edu.hk}). Weifeng Qiu is the corresponding author.}
\and Weifeng Qiu\footnotemark[2]}

\usepackage{amsopn}

\ifpdf
\hypersetup{
  pdftitle={Spectral analysis of a mixed method for linear elasticity},
  pdfauthor={Xaing Zhong and Weifeng Qiu}
}
\fi




\begin{document}
	
\maketitle
\begin{abstract}
The purpose of this paper is to analyze a mixed method for linear elasticity eigenvalue problem, which approximates 
numerically the stress, displacement, and rotation, by piecewise $(k+1)$, $k$ and $(k+1)$-th degree polynomial 
functions ($k\geq 1$), respectively. The numerical eigenfunction of stress is symmetric. 
By the discrete $H^1$-stability of numerical displacement, we prove an $O(h^{k+2})$ approximation to 
the $L^{2}$-orthogonal projection of the eigenspace of exact displacement for the eigenvalue problem, 
with proper regularity assumption. Thus via postprocessing, we obtain a better approximation to the eigenspace of exact 
displacement for the eigenproblem than conventional methods.  We also prove that numerical approximation to 
the eigenfunction of stress is locking free with respect to Poisson ratio. 
We introduce a hybridization to reduce the mixed method to a condensed eigenproblem and 
prove an $O(h^2)$ initial approximation (independent of the inverse of  the elasticity operator)  of the eigenvalue 
for the nonlinear eigenproblem by using the discrete $H^1$-stability of numerical displacement, 
while only an $O(h)$ approximation can be obtained if we use the traditional inf-sup condition. 
Finally, we report some numerical experiments.
\end{abstract}

\begin{keywords}
linear elasticity, eigenvalue problem, mixed methods, error estimates
\end{keywords}

\begin{MSCcodes}
	65N12, 65N15, 65N30, 74B05
\end{MSCcodes}

\section{Introduction}
In this paper, we consider a mixed finite element approximation of an eigenvalue problem in linear elasticity for two and 
three dimensional domains.
The mixed method we use is introduced in \cite[Section 6]{JJ2012} on special grids presented in \cite[Assumption 6.2]{JJ2012}. It's also well known that the mixed method 
allows us to deal safely with nearly incompressible materials.

The numerical approximation of eigenvalue problems has attracted extensive research interest, for instance, \cite{S2022,SDR2015,ADR2007,SMe2014,FG2021,DS2016,DF2020,SM2022,SDR2013,SDR2019}. Particular interest in the mixed finite element analysis of elasticity eigenproblem can be found in \cite{S2022,ADR2007,SMe2014,DS2016,SM2022,SDR2013,SDR2019}. In these references, they show that the proposed scheme provides a correct approximation of the spectrum. However, the underlying solution operators corresponding to their mixed formulations are not compact. So the analysis of the resulting discrete eigenproblems doesn't fit in the standard spectral framework(see \cite{PG2013,Osborn1975}). 
We recalled the eigenvalue problem of \cite{SDR2013,SDR2019,DS2016,SM2022}: Find the stress $\underline{\bm{\sigma}}:\Omega\to\mathbb{R}^{n\times n}$ symmetric, $\underline{\bm{\rho}}:\Omega\to\mathbb{R}^{n\times n}$ skew symmetric and the corresponding natural frequencies $\omega\in\mathbb{R}$ such that
\begin{subequations}
	\label{previous pde}
	\begin{align}		
		-\nabla(\rho_S^{-1}\textbf{div}\underline{\bm{\sigma}})=\omega^2(\mathcal{A}\underline{\bm{\sigma}}+\underline{\bm{\rho}})\quad &\rm in\quad\Omega,\label{previous pde_a}\\
		\textbf{div}\underline{\bm{\sigma}}=\mathbf{0}\quad  &\rm on\quad\Gamma_0,\label{previous pde_b}\\
		\underline{\bm{\sigma}}\mathbf{n}=\mathbf{0}\quad &\rm on\quad\Gamma_1, \label{previous pde_c}
	\end{align}	
\end{subequations}		
where $\underline{\bm{\rho}}:=\nabla\mathbf{u}-\underline{\bm{\epsilon}}(\mathbf{u})$ is the rotation and $\mathbf{u}$ denotes the displacement, $\underline{\bm{\epsilon}}(\mathbf{u})=\frac{1}{2}(\nabla\mathbf{u}+(\nabla\mathbf{u})^t)$ the linearized strain tensor. $\Omega\subset \mathbb{R}^n$ 
($n=2,3$) is a bounded and connected Lipschitz polyhedral domain occupied by an isotropic and linearly elastic
solid. $\partial\Omega$ is the boundary of the domain $\Omega$, which admits
a disjoint partition $\partial\Omega=\Gamma_0\cup\Gamma_1$. For  simplicity, we also assume that $\Gamma_0$ has a positive measure. $\mathbf{n}$ is the outward unit normal vector to $\partial\Omega$. $\rho_S$ is the density of the material, which we assume a strictly positive constant. $\mathcal{A}$ is the inverse of the elasticity operator, which is given by
\begin{equation}
	\label{1220a}
	\mathcal{A}\underline{\bm{\tau}}:=\frac{1}{2\mu_S}\underline{\bm{\tau}}^D+\frac{1}{n(n\lambda_S+2\mu_S)}(tr\underline{\bm{\tau}})\bm{\mathit{I}},
\end{equation}
where $\lambda_S$ and $\mu_S$ are the Lam$\rm \acute{e}$ constants. $\bm{\mathit{I}}$ is the identity matrix of $\mathbb{R}^{n\times n}$. And we denote the deviatoric tensor $\underline{\bm{\tau}}^D:=\underline{\bm{\tau}}-\frac{1}{n}(tr\underline{\bm{\tau}})\bm{\mathit{I}}$. For nearly incompressible materials, $\lambda_S$ is large in comparison with $\mu_S$.

Observing (\ref{previous pde}), the underlying solution operator is clearly non-compact since it acts on the stress $\underline{\bm{\sigma}}$, which is defined in $\bm{\mathit{H}}(div;\Omega)$. We recall that \cite{DS2016} utilizes the lowest order mixed finite element of the PEERS; \cite{SDR2013,SM2022} use the Arnold-Falk-Winther element; \cite{S2022,SDR2019} analyze the elasticity eigenproblem by mixed DG methods. In these references, the displacement can be recovered and post-processed at the discrete level by using identity ``$\omega^2\mathbf{u}=-\rho_S^{-1}\textbf{div}\underline{\bm{\sigma}}$''. By utilizing the techniques 
in \cite{SDR2013,SDR2019,S2022,DS2016,SM2022}, it is easy to show 
{\begin{equation}
		\label{previous estimate}
		\widetilde{\delta}(R(\bm{\mathit{E}}),  R(\bm{\mathit{E}}_h))\leq Ch^{\rm min\{t,k+1\}},
	\end{equation}
	where $R(\bm{\mathit{E}})$, $R(\bm{\mathit{E}}_h)$ are the eigenspaces of the exact and numerical displacements. $\widetilde{\delta}(R(\bm{\mathit{E}}),  R(\bm{\mathit{E}}_h))$ is the gap between $R(\bm{\mathit{E}})$ and $R(\bm{\mathit{E}}_h)$ under $L^2$-norm.} $k$ is the polynomial degree of numerical displacement and $t$ is a constant associated with the regularity. Moreover, under discrete $H^1$-norm(see (\ref{0112b})), we can only obtain($\hat{\delta}(\cdot,\cdot)$ is the gap under discrete $H^1$-norm) 
\begin{equation}
	\label{previous estimate_discrete H1}
	\hat{\delta}(\bm{\mathit{P}}(R(\bm{\mathit{E}})),  R(\bm{\mathit{E}}_h))\leq Ch^{\rm min\{t,k\}},
\end{equation}
where $\bm{\mathit{P}}$ is the $\bm{\mathit{L}}^2$-orthogonal projection operator onto the finite element space for $\mathbf{u}_h$(i.e. the numerical eigenfunction of displacement).

Unlike \cite{SDR2013,SDR2019}, our formulation of the eigenvalue problem with natural frequencies $\omega>0$ is given by
\begin{subequations}
	\label{our pde}
	\begin{align}		
		\mathcal{A}\underline{\bm{\sigma}}-\nabla\mathbf{u}+\underline{\bm{\rho}}=\mathbf{0}\quad &\rm in\quad\Omega ,\label{our pde_a}\\
		-\rho_S^{-1}\textbf{div}\underline{\bm{\sigma}}=\omega^2\mathbf{u}\quad &\rm in\quad\Omega,\label{oue pde_b}\\
		\underline{\bm{\sigma}}\mathbf{n}=\mathbf{0}\quad &\rm on\quad\Gamma_1,\label{our pde_c}\\
		\mathbf{u}=\mathbf{0}\quad &\rm on\quad\Gamma_0,\label{our pde_d}
	\end{align}	
\end{subequations}
It's clear that (\ref{our pde}) is equivalent to (\ref{previous pde}) and we can obtain the corresponding source problem by replacing $\omega^2\mathbf{u}$ with $\bm{\mathit{f}}\in\bm{\mathit{L}}^2(\Omega)$. The underlying solution operator $\bm{\mathit{T}}$ is compact from $\bm{\mathit{L}}^2(\Omega)$ into itself: $\mathbf{u}=\bm{\mathit{T}}\bm{\mathit{f}}$, which is analogous to the analyses in different or similar contexts (see, for instance, \cite[Chapter 11.3]{BBF2013} and \cite{BBD2021,FDR2021,JA2018}). Our mixed method approximates numerically the stress, displacement, and rotation, by piecewise 
$(k+1)$, $k$, and $(k+1)$-th degree polynomial functions ($k\geq 1$) on special grids(see section \ref{sec:The discrete problem} for more details). Actually, we select this special grid sequence because the method yields exactly symmetric stress approximations.

More importantly, we utilize discrete $H^1$-stability of numerical displacement to analyze the linear elasticity eigenvalue problem, which doesn't exist before. 
By this new way of analysis, we provide a better estimate for the eigenvalue problem:
{\begin{equation}
		\label{our estimate}
		\hat{\delta}(\bm{\mathit{P}}(R(\bm{\mathit{E}})),  R(\bm{\mathit{E}}_h))\leq Ch^{\rm min\{t,k+2\}},
	\end{equation}
	where the gap $\hat{\delta}(\cdot,\cdot)$ is defined under discrete $H^1$ norm(i.e.$||\cdot||_{1,h}$).}  $||\mathbf{u}_h||_{1,h}$ is denoted as
\begin{equation}
	\label{0112b}
	||\mathbf{u}_h||_{1,h}^2:=\sum_{K\in\mathcal{T}_h}\int_K|\nabla\mathbf{u}_h|^2dx+\sum_{F\in\varepsilon^{I}\cup\varepsilon^{D}}\frac{1}{h_F}\int_F|[\![\mathbf{u}_h]\!]|^2dx, 	
\end{equation}
where $h_F$ denotes the size of the face $F$. For two adjacent elements $K$ and $K'$ sharing the same face $F$, the jump of $\mathbf{u}_h$ across $F$ is defined by 
$[\![\mathbf{u}_h]\!]:=\mathbf{u}_h|_{\partial K\cap F}-\mathbf{u}_h|_{\partial K'\cap F}.$
In the case of $F\in\partial K$ lying on $\partial\Omega$, we define $[\![\mathbf{u}_h]\!]:=\mathbf{u}_h|_{\partial K\cap F}$. $\varepsilon^{I}$ is the set of all interior edges,  $\varepsilon^{D}:=\{F: F\in\Gamma_0\}$, $\varepsilon^{N}:=\{F: F\in\Gamma_1\}$ and $\varepsilon_h=\varepsilon^{I}\cup\varepsilon^{D}\cup\varepsilon^{N}$.
Then by postprocessing,
\begin{equation}
	\label{0110c}
	\hat{\delta}_1(R(\bm{\mathit{E}}),R(\hat{\bm{\mathit{E}}}_h))\leq Ch^{k+2},\hspace{1em}\widetilde{\delta}(R(\bm{\mathit{E}}),R(\hat{\bm{\mathit{E}}}_h))\leq Ch^{k+3} 
\end{equation}
with proper regularity assumption. Here $\hat{\delta}_1(\cdot,\cdot)$ is the gap under the norm $(\sum_{K\in\mathcal{T}_h}$ $||\nabla(\cdot)||_{0,K}^2)^{\frac{1}{2}}$ and $R(\hat{\bm{\mathit{E}}}_h)$ is the postprocessed eigenspace(see section \ref{sec: Postprocessing} for more details). Comparing (\ref{previous estimate}), (\ref{previous estimate_discrete H1}) and (\ref{our estimate}), (\ref{0110c}), our analysis based on discrete $H^{1}$-stability of numerical displacement provides a better convergence for the eigenfunction of displacement. 

We also point out that our mixed formulation is equivalent to that of \cite{SDR2013,SDR2019}(see Remark \ref{added_a} for more details). Therefore, utilizing our analysis to the system of \cite{SDR2013,SDR2019}, we can obtain all results of \cite{SDR2013,SDR2019} and get some better estimates than \cite{SDR2013,SDR2019}: discrete $H^1$ stability properties and error estimates as (\ref{our estimate}), (\ref{0110c}).

Another contribution of this paper is that we introduce a dimension-reduced
implementation by hybridization and give a good initial approximation of the eigenvalue for the nonlinear eigenvalue problem. The mixed approximation in \cite{SDR2013,SDR2019} is formulated by the stress tensor and the rotation. Our mixed method approximates numerically the stress, displacement, and rotation, by piecewise 
$(k+1)$, $k$, and $(k+1)$-th degree polynomial functions ($k\geq 1$), respectively. Though the system looks more complicated,  
hybridization reduces the approximation to a condensed eigenproblem. The condensed eigenproblem is nonlinear, but much smaller than the original mixed approximation. A related hybridization was introduced in \cite{CJL2010} for Poisson equations.  We take advantage of the
discrete $H^1$-stability of numerical displacement to obtain an $O(h^2)$ initial approximation of the  eigenvalue for the nonlinear eigenproblem. All analyses here don't depend on the inverse of $\mathcal{A}$. 
We point out that if we use the analysis in \cite{CJL2010}, we can not obtain the approximation of the rotation because the rotation will be eliminated during the energy argument; Then by the traditional inf-sup condition, we can only obtain an $O(h)$ estimate. By contrast, ``discrete $H^1$-stability of numerical displacement'' helps to give a more detailed analysis(see Remarks \ref{1222l} and \ref{novelty remark} for more details).

The paper is organized as follows. In section \ref{sec: The spectral problem}, we introduce a mixed formulation with reduced symmetry of the eigenvalue elasticity problem and the corresponding solution
operator. Section \ref{sec: Spectral characterization} is devoted to giving the spectrum of the compact solution operator. In section \ref{sec:The discrete problem}, we introduce the discrete eigenvalue problem, describe 
the spectrum of the discrete solution operator, and prove that the discrete solution operators converge to the compact solution operator in discrete $H^1$-norm by using the discrete $H^1$-stability of numerical displacement. In section \ref{sec: Spectral approximation}, we approximate the $L^2$-orthogonal projection of the
eigenspace of exact displacement in discrete $H^1$-norm. Asymptotic error estimates for the eigenvalues and the stress of the eigenvalue elasticity problem will be also established. In section \ref{sec: Hybridization}, we present a hybridization for the eigenvalue problem and give an $O(h^2)$ initial approximation of the eigenvalue for the nonlinear eigenproblem. In section \ref{sec: Postprocessing},
we provide a post-processing that gives an improved approximation to the eigenspace of exact displacement. Finally, we present
in section \ref{sec: Numerical results} numerical experiments to confirm the
theoretical results proved above.

We end this section with some notations. Given any Hilbert space $\mathit{V}$, let $\bm{\mathit{V}}$ and $\underline{\bm{\mathit{V}}}$ denote, respectively, the space of vector-valued and matrix-valued functions with entries in $\mathit{V}$.   
$\mathbb{C}$ represents the set of all complex numbers. In particular, $\underline{\bm{\mathit{H}}}(\textbf{div};\Omega)$ is the space of matrix-valued functions such that each row belongs to $\bm{\mathit{H}}(\textbf{div};\Omega)$. Given $\underline{\bm{\tau}}:=(\tau_{ij})$, we define as usual the transpose matrix $\underline{\bm{\tau}}^t:=(\tau_{ji})$, the trace $tr\underline{\bm{\tau}}:=\sum_{i=1}^n\tau_{ii}$. $(\bm{\zeta},\bm{\theta})_D,(\underline{\bm{\zeta}}$, $\underline{\bm{\theta}})_D$ denote the integral over $D$ of $\bm{\zeta}\cdot\bm{\theta}$, $\underline{\bm{\zeta}}:\underline{\bm{\theta}}$ ( ``$\cdot$'' and ``:'' denote the dot product and the Frobenius inner-product respectively). For $s>0$, $||\cdot||_{s,\Omega}$ stands for the norm of the Hilbertian Sobolev spaces $\mathit{H}^s(\Omega)$, $\bm{\mathit{H}}^s(\Omega)$, $\underline{\bm{\mathit{H}}}^s(\Omega)$, with the convention $\mathit{H}^0(\Omega):=\mathit{L}^2(\Omega)$. We also define for $s\geq0$ the Hilbert space $\underline{\bm{\mathit{H}}}^s(\textbf{div};\Omega):=\{\underline{\bm{\tau}}\in\underline{\bm{\mathit{H}}}^s(\Omega): \textbf{div}\underline{\bm{\tau}}\in\bm{\mathit{H}}^s(\Omega)\}$, whose norm is given by $||\underline{\bm{\tau}}||_{\underline{\bm{\mathit{H}}}^s(\textbf{div};\Omega)}^2:=||\underline{\bm{\tau}}||_{s,\Omega}^2+||\textbf{div}\underline{\bm{\tau}}||_{s,\Omega}^2$ and let $\underline{\bm{\mathit{H}}}^0(\textbf{div};\Omega):=\underline{\bm{\mathit{H}}}(\textbf{div};\Omega)$. 
In addition, we denote generic constants $C$, taking different values at different places.



\section{The spectral problem}
\label{sec: The spectral problem}

In this section, we will introduce a mixed formulation
with reduced symmetry of the eigenvalue elasticity problem and define the corresponding solution
operator.
We aim to employ a mixed approach to derive a variational formulation of this problem. The stress tensor $\underline{\bm{\sigma}}$ will be sought in 
$\underline{\bm{\mathit{Y}}}:=\{\underline{\bm{\tau}}\in\underline{\bm{\mathit{H}}}(\textbf{div};\Omega):\underline{\bm{\tau}}\mathbf{n}=\mathbf{0}\hspace{0.5em} {\rm on\hspace{0.5em}\Gamma_1}\},$ a closed subspace of $\underline{\bm{\mathit{H}}}(\textbf{div};\Omega)$.
We also introduce the space of skew symmetric tensors
$\underline{\bm{\mathit{Q}}}:=\{\bm{\mathit{s}}\in\underline{\bm{\mathit{L}}}^2(\Omega): \bm{\mathit{s}}=-\bm{\mathit{s}}^t\}$ and the rotation $\underline{\bm{\rho}}:=\frac{1}{2}(\nabla\mathbf{u}-(\nabla\mathbf{u})^t)$. 
In order to write the variational formulation of the spectral problem, we introduce the following bounded bilinear forms in $\underline{\bm{\mathit{Y}}}\times\bm{\mathit{L}}^2(\Omega)\times\underline{\bm{\mathit{Q}}}$:
$A((\underline{\bm{\sigma}},\mathbf{u},\underline{\bm{\rho}}),(\underline{\bm{\upsilon}},\bm{\omega},\underline{\bm{\eta}}))=(\mathcal{A}\underline{\bm{\sigma}},\underline{\bm{\upsilon}})_{\Omega}+(\mathbf{u},\textbf{div}\underline{\bm{\upsilon}})_{\Omega}+(\underline{\bm{\rho}},\underline{\bm{\upsilon}})_{\Omega}+(\rho_S^{-1}\textbf{div}\underline{\bm{\sigma}},\bm{\omega})_{\Omega}+(\underline{\bm{\sigma}},\underline{\bm{\eta}})_{\Omega},$
$B((\underline{\bm{\sigma}},\mathbf{u},\underline{\bm{\rho}}),(\underline{\bm{\upsilon}},\bm{\omega},\underline{\bm{\eta}}))=-(\mathbf{u},\bm{\omega})_{\Omega}.$
For simplicity, we also write $B((\underline{\bm{\sigma}},\mathbf{u},\underline{\bm{\rho}}),(\underline{\bm{\upsilon}},\bm{\omega},\underline{\bm{\eta}}))$ as $B(\mathbf{u},\bm{\omega})$. we can write the variational eigenvalue
problem in which $\lambda=\omega^2$ as
\begin{equation}
	\label{0112a}
	A((\underline{\bm{\sigma}},\mathbf{u},\underline{\bm{\rho}}),(\underline{\bm{\upsilon}},\bm{\omega},\underline{\bm{\eta}}))=\lambda B(\mathbf{u},\bm{\omega}),\quad \forall(\underline{\bm{\upsilon}},\bm{\omega},\underline{\bm{\eta}})\in\underline{\bm{\mathit{Y}}}\times\bm{\mathit{L}}^2(\Omega)\times\underline{\bm{\mathit{Q}}}
\end{equation} 
For problem (\ref{0112a}), we introduce the corresponding solution operator 
\begin{equation*}
	\begin{aligned}
		\bm{\mathit{T}}: \bm{\mathit{L}}^2(\Omega)& \rightarrow \bm{\mathit{L}}^2(\Omega) \\
		\bm{\mathit{f}}&\mapsto\mathbf{u}^{\mathit{f}}
	\end{aligned}	
\end{equation*}
where $\mathbf{u}^{\mathit{f}}$, also with $(\underline{\bm{\sigma}}^{\mathit{f}},\underline{\bm{\rho}}^{\mathit{f}})\in\underline{\bm{\mathit{Y}}}\times\underline{\bm{\mathit{Q}}}$ is the solution of the following source problem:
\begin{subequations}
	\label{c}
	\begin{align}		
		(\mathcal{A}\underline{\bm{\sigma}}^{\mathit{f}},\underline{\bm{\upsilon}})_{\Omega}+(\mathbf{u}^{\mathit{f}},\textbf{div}\underline{\bm{\upsilon}})_{\Omega}+(\underline{\bm{\rho}}^{\mathit{f}},\underline{\bm{\upsilon}})_{\Omega}&=0 \label{c_a}\\
		-(\rho_S^{-1}\textbf{div}\underline{\bm{\sigma}}^{\mathit{f}},\bm{\omega})_{\Omega}&=(\bm{\mathit{f}},\bm{\omega})_{\Omega} \label{c_b}\\
		(\underline{\bm{\sigma}}^{\mathit{f}},\underline{\bm{\eta}})_{\Omega}&=0  \label{c_c}
	\end{align}	
\end{subequations}
for all $(\underline{\bm{\upsilon}},\bm{\omega},\underline{\bm{\eta}})\in\underline{\bm{\mathit{Y}}}\times\bm{\mathit{L}}^2(\Omega)\times\underline{\bm{\mathit{Q}}}$. 
Notice that $(\lambda,\mathbf{u})\in \mathbb{R}\times\bm{\mathit{L}}^2(\Omega)$ solves ($\ref{0112a}$) if and only if $(\mu,\mathbf{u})$, with $\mu=\lambda^{-1}$ is an eigenpair of $\bm{\mathit{T}}$, i.e. if and only if $\bm{\mathit{T}}\mathbf{u}=\lambda^{-1}\mathbf{u} \quad (\lambda> 0).$
In this case, $(\underline{\bm{\sigma}}^{\mathit{f}}, \underline{\bm{\rho}}^{\mathit{f}})=\lambda^{-1}(\underline{\bm{\sigma}}, \underline{\bm{\rho}})$.
Clearly the linear operator $\bm{\mathit{T}}$ is well defined and bounded.

\section{Spectral characterization}
\label{sec: Spectral characterization}
In this section, the characterizations of the spectrum of $\bm{\mathit{T}}$ will be given by Theorem \ref{theorem3.1} and proper regularity results will also be assumed in Assumption \ref{0128b}.

Given any $\bm{\mathit{f}}\in\bm{\mathit{L}}^2(\Omega)$, we have $\bm{\mathit{T}}\bm{\mathit{f}}=\mathbf{u}^{\mathit{f}}$; By testing (\ref{c_a}) with $\underline{\bm{\upsilon}}\in\underline{\bm{\mathcal{D}}}(\Omega)\cap\underline{\bm{\mathit{Y}}}$, where $\underline{\bm{\mathcal{D}}}(\Omega)=\{\underline{\bm{\phi}}\in\underline{\bm{\mathcal{C}}}^{\infty}(\Omega)$, supp$\underline{\bm{\phi}}$ is  a  compact subset of $\Omega$\}, we have that
$\mathcal{A}\underline{\bm{\sigma}}^{\mathit{f}}-\nabla\mathbf{u}^{\mathit{f}}+\underline{\bm{\rho}}^{\mathit{f}}=\mathbf{0}$.
Then we have $\mathbf{u}^{\mathit{f}}\in\bm{\mathit{H}}^1_{0,\Gamma_0}(\Omega):=\{\mathbf{u}\in\bm{\mathit{H}}^1(\Omega):\mathbf{u}=\mathbf{0}\hspace{0.5em} \rm on\hspace{0.5em}\Gamma_0\}$, which is a closed subspace of $\bm{\mathit{H}}^1(\Omega)$. 
Problem (\ref{c}) is the well-known dual-mixed formulation with weakly imposed symmetry of the classical elasticity source problem (i.e. (\ref{our pde}) with $(\omega^2\mathbf{u},\underline{\bm{\sigma}},\mathbf{u},\underline{\bm{\rho}})$ replaced by $(\bm{\mathit{f}},\underline{\bm{\sigma}}^{\mathit{f}},\mathbf{u}^{\mathit{f}},\underline{\bm{\rho}}^{\mathit{f}})$).
Clearly, $\bm{\mathit{T}}: \bm{\mathit{L}}^2(\Omega)\rightarrow \bm{\mathit{H}}^1_{0,\Gamma_0}(\Omega)\subset\bm{\mathit{L}}^2(\Omega)$ is compact.

\begin{theorem}
	\label{theorem3.1}
	The spectrum of $\bm{\mathit{T}}$ decomposes as follows: sp$(\bm{\mathit{T}})$=$\{0\}\cup\{\mu_k\}_{k=1}^{\infty}$, where
	
	{\rm(i)} $\{\mu_k\}_{k=1}^{\infty}\subset(0,\infty)$ is an infinite sequence of eigenvalues of $\bm{\mathit{T}}$ and there exists an infinite sequence $\{\bm{\mathit{p}}_n\}_{n=1}^{\infty}$ of corresponding eigenvectors in $\bm{\mathit{H}}^1_{0,\Gamma_0}(\Omega)$ that satisfy 
	$\mu_1=||\bm{\mathit{T}}||,\hspace{0.5em} \mu_1\geq\mu_2\geq\cdot\cdot\cdot\geq\mu_n\geq\cdot\cdot\cdot,\hspace{0.5em} \mu_n\neq0\hspace{0.5em} \rm for\hspace{0.5em} \rm all\hspace{0.5em} \rm n\geq 1,\hspace{0.5em} \rm\lim\limits_{n\to\infty}\mu_n=0,$$
	$$\bm{\mathit{T}}\bm{\mathit{p}}_n=\mu_n\bm{\mathit{p}}_n\hspace{0.5em}  for\hspace{0.5em}  all\hspace{0.5em}  n\geq 1\hspace{0.5em}  and \hspace{0.5em} (\bm{\mathit{p}}_{\mathit{k}},\bm{\mathit{p}}_{\mathit{l}})_{\Omega}=\delta_{\mathit{k}\mathit{l}}\hspace{0.5em}  for\hspace{0.5em}  all\hspace{0.5em}  \mathit{k},\mathit{l}\geq 1.$
	Moreover, the ascent of each eigenvalue is 1.
	{\rm(ii)} $\mu=0$ is not an eigenvalue of $\bm{\mathit{T}}$.	
\end{theorem}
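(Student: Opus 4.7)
The plan is to establish that $\bm{\mathit{T}}$ is self-adjoint, positive, and injective on $\bm{\mathit{L}}^2(\Omega)$, and then to combine these properties with the compactness already shown in Section~3.1 via the spectral theorem for compact self-adjoint operators. Self-adjointness will come from testing the source problem against a second solution: for sources $\bm{\mathit{f}},\bm{\mathit{g}}$ with triples $(\underline{\bm{\sigma}}^{\mathit{f}},\mathbf{u}^{\mathit{f}},\underline{\bm{\rho}}^{\mathit{f}})$ and $(\underline{\bm{\sigma}}^{\mathit{g}},\mathbf{u}^{\mathit{g}},\underline{\bm{\rho}}^{\mathit{g}})$, I would test (\ref{c}a) for $\bm{\mathit{f}}$ against $\underline{\bm{\upsilon}}=\underline{\bm{\sigma}}^{\mathit{g}}$. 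The pairing $(\underline{\bm{\rho}}^{\mathit{f}},\underline{\bm{\sigma}}^{\mathit{g}})_\Omega$ vanishes because (\ref{c}c) applied to $\bm{\mathit{g}}$ forces $\underline{\bm{\sigma}}^{\mathit{g}}$ to be symmetric, while $\underline{\bm{\rho}}^{\mathit{f}}$ is skew. Using (\ref{c}b) for $\bm{\mathit{g}}$ with $\bm{\omega}=\mathbf{u}^{\mathit{f}}$ to rewrite $(\mathbf{u}^{\mathit{f}},\textbf{div}\underline{\bm{\sigma}}^{\mathit{g}})_\Omega$, I obtain the key identity $(\mathcal{A}\underline{\bm{\sigma}}^{\mathit{f}},\underline{\bm{\sigma}}^{\mathit{g}})_\Omega=\rho_S(\bm{\mathit{g}},\mathbf{u}^{\mathit{f}})_\Omega=\rho_S(\bm{\mathit{g}},\bm{\mathit{T}}\bm{\mathit{f}})_\Omega$. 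Interchanging $\bm{\mathit{f}}$ and $\bm{\mathit{g}}$ and using the pointwise symmetry of $\mathcal{A}$ visible from (\ref{1220a}) yields $(\bm{\mathit{T}}\bm{\mathit{f}},\bm{\mathit{g}})_\Omega=(\bm{\mathit{f}},\bm{\mathit{T}}\bm{\mathit{g}})_\Omega$, and choosing $\bm{\mathit{g}}=\bm{\mathit{f}}$ together with the coercivity of $\mathcal{A}$ already exploited in Section~3.1 gives $(\bm{\mathit{T}}\bm{\mathit{f}},\bm{\mathit{f}})_\Omega\geq 0$.

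Next, to prove injectivity of $\bm{\mathit{T}}$---which handles part (ii) directly---I would assume $\bm{\mathit{T}}\bm{\mathit{f}}=\mathbf{u}^{\mathit{f}}=0$ and invoke the pointwise identity $\mathcal{A}\underline{\bm{\sigma}}^{\mathit{f}}-\nabla\mathbf{u}^{\mathit{f}}+\underline{\bm{\rho}}^{\mathit{f}}=\mathbf{0}$ derived just before the theorem by testing against compactly supported smooth fields. With $\mathbf{u}^{\mathit{f}}=0$ this collapses to $\mathcal{A}\underline{\bm{\sigma}}^{\mathit{f}}+\underline{\bm{\rho}}^{\mathit{f}}=\mathbf{0}$. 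Equation (\ref{c}c) forces $\underline{\bm{\sigma}}^{\mathit{f}}$ to be symmetric almost everywhere, and formula (\ref{1220a}) shows $\mathcal{A}$ maps symmetric tensors to symmetric tensors. The symmetric and skew parts of the identity must therefore vanish separately, so $\mathcal{A}\underline{\bm{\sigma}}^{\mathit{f}}=\mathbf{0}$, and hence $\underline{\bm{\sigma}}^{\mathit{f}}=\mathbf{0}$ by invertibility of $\mathcal{A}$. Plugging into (\ref{c}b) forces $(\bm{\mathit{f}},\bm{\omega})_\Omega=0$ for every $\bm{\omega}\in\bm{\mathit{L}}^2(\Omega)$, hence $\bm{\mathit{f}}=\mathbf{0}$.

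With compactness, self-adjointness, positivity, and injectivity in hand, the Hilbert--Schmidt spectral theorem delivers part (i): an orthonormal basis of $\bm{\mathit{L}}^2(\Omega)$ of eigenvectors $\{\bm{\mathit{p}}_n\}$ with positive real eigenvalues $\mu_1\geq\mu_2\geq\cdots$ accumulating only at $0$, and $\mu_1=\|\bm{\mathit{T}}\|$. Ascent one on each eigenspace is automatic for self-adjoint operators, where geometric and algebraic multiplicities coincide. The sequence must be infinite: otherwise the spectral decomposition combined with injectivity would present $\bm{\mathit{L}}^2(\Omega)$ as a finite direct sum of finite-dimensional eigenspaces. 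Finally, since $\bm{\mathit{p}}_n=\mu_n^{-1}\bm{\mathit{T}}\bm{\mathit{p}}_n$ and Section~3.1 already showed $\bm{\mathit{T}}:\bm{\mathit{L}}^2(\Omega)\to\bm{\mathit{H}}^1_{0,\Gamma_0}(\Omega)$ is bounded, every $\bm{\mathit{p}}_n$ lies in $\bm{\mathit{H}}^1_{0,\Gamma_0}(\Omega)$.

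The step I expect to be most delicate is the symmetric/skew decomposition underpinning injectivity. A careful reading of (\ref{c}c) is needed to ensure $\underline{\bm{\sigma}}^{\mathit{f}}$ is genuinely symmetric in the pointwise almost-everywhere sense (and not merely orthogonal to a strict subspace of skew test tensors), and one must verify from (\ref{1220a}) that $\mathcal{A}$ does not mix symmetric and skew components. Both facts hold, but they are the only places where the specific structure of the weakly symmetric formulation truly enters the argument, so they deserve explicit justification in the written proof.
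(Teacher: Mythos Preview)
Your proposal is correct and follows essentially the same strategy as the paper: invoke the spectral theorem for compact self-adjoint operators on $\bm{\mathit{L}}^2(\Omega)$ to obtain (i), and show injectivity of $\bm{\mathit{T}}$ for (ii). The paper simply asserts self-adjointness and cites \cite[Theorem 4.11-1]{PG2013}, whereas you supply the explicit identity $(\mathcal{A}\underline{\bm{\sigma}}^{\mathit{f}},\underline{\bm{\sigma}}^{\mathit{g}})_\Omega=\rho_S(\bm{\mathit{g}},\bm{\mathit{T}}\bm{\mathit{f}})_\Omega$ that underlies it; this extra detail is welcome but not a different route.

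The only noteworthy divergence is in the argument for (ii). The paper's proof writes ``$(\underline{\bm{\sigma}}^{\mathit{f}},\underline{\bm{\rho}}^{\mathit{f}})=\mu(\underline{\bm{\sigma}},\underline{\bm{\rho}})=\mathbf{0}$'', apparently invoking the eigenpair scaling relation $(\underline{\bm{\sigma}}^{\mathit{f}},\underline{\bm{\rho}}^{\mathit{f}})=\lambda^{-1}(\underline{\bm{\sigma}},\underline{\bm{\rho}})$ stated just before the theorem---but that relation was derived only for $\lambda>0$, so using it at $\mu=0$ is at best elliptical. Your argument via the pointwise identity $\mathcal{A}\underline{\bm{\sigma}}^{\mathit{f}}-\nabla\mathbf{u}^{\mathit{f}}+\underline{\bm{\rho}}^{\mathit{f}}=\mathbf{0}$ and the symmetric/skew splitting is self-contained and avoids this circularity, so it is actually an improvement over the paper's presentation. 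Your concern about whether (\ref{c}c) forces pointwise symmetry is easily settled: the test space $\underline{\bm{\mathit{Q}}}$ is all of $\{\underline{\bm{\eta}}\in\underline{\bm{\mathit{L}}}^2(\Omega):\underline{\bm{\eta}}^t=-\underline{\bm{\eta}}\}$, so orthogonality to it does give $\underline{\bm{\sigma}}^{\mathit{f}}=(\underline{\bm{\sigma}}^{\mathit{f}})^t$ a.e.
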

\begin{proof}
	Since $\bm{\mathit{T}}:\bm{\mathit{L}}^2(\Omega)\to\bm{\mathit{L}}^2(\Omega)$ is compact and self-adjoint(with respect to $\mathit{L}^2$-norm), then by \cite[Theorem 4.11-1]{PG2013}, we obtain (i). Moreover, the ascent of each eigenvalue is 1 (see, for instance \cite{Osborn1975}). For (ii), since $\bm{\mathit{L}}^2(\Omega)$ is infinite-dimensional, then $\mu=0$ is a spectrum of $\bm{\mathit{T}}$ (If $0\notin sp(\bm{\mathit{T}})$, then 0 belongs to the resolvent set of $\bm{\mathit{T}}.$ By the definition of resolvent set(see \cite[p.90]{App}), $\bm{\mathit{T}}^{-1}$ exists and is bounded from $R(\bm{\mathit{T}})$(i.e. the range of $\bm{\mathit{T}}$) to $\bm{\mathit{L}}^2(\Omega)$. So $\bm{\mathit{T}}\bm{\mathit{T}}^{-1}=\bm{\mathit{I}}$ is compact. It's a contradiction because $\bm{\mathit{L}}^2(\Omega)$ is infinite dimensional). Suppose $\exists\mu=0$ and $\bm{\mathit{f}}\neq\mathbf{0}$ such that $\mathbf{u}^{\mathit{f}}=\bm{\mathit{T}}\bm{\mathit{f}}=\mu\bm{\mathit{f}}=\mathbf{0}$. Then $(\underline{\bm{\sigma}}^{\mathit{f}},\underline{\bm{\rho}}^{\mathit{f}})=\mu(\underline{\bm{\sigma}},\underline{\bm{\rho}})=\mathbf{0}.$ Thus $\bm{\mathit{f}}=-\rho_S^{-1}\rm \textbf{div}\underline{\bm{\sigma}}^{\mathit{f}}=\mathbf{0} $ by (\ref{c_b}), which is a contradiction.
\end{proof}
\begin{assumption}
	\label{0128b}
	(see\cite{Dau1988,Gri1986,SDR2019})$\exists$ $s_0\in(0,1]$ such that for all $s\in(0,s_0]$: There exists a constant $C^{reg}>0$, independent of $\lambda_S$, such that for all $\bm{\mathit{f}}\in\bm{\mathit{L}}^2(\Omega)$, if $(\underline{\bm{\sigma}}^{\mathit{f}},\mathbf{u}^{\mathit{f}},\underline{\bm{\rho}}^{\mathit{f}})\in\underline{\bm{\mathit{Y}}}\times\bm{\mathit{L}}^2(\Omega)\times\underline{\bm{\mathit{Q}}}$	is the solution to (\ref{c}), then
	$||\underline{\bm{\sigma}}^{\mathit{f}}||_{s,\Omega}+||\mathbf{u}^{\mathit{f}}||_{1+s,\Omega}+||\underline{\bm{\rho}}^{\mathit{f}}||_{s,\Omega}\leq C^{reg}||\bm{\mathit{f}}||_{0,\Omega}.$
	Consequently, $(\underline{\bm{\sigma}}^{\mathit{f}},\underline{\bm{\rho}}^{\mathit{f}})\in\underline{\bm{\mathit{H}}}^s(\Omega)\times\underline{\bm{\mathit{H}}}^s(\Omega),$ $\bm{\mathit{T}}(\bm{\mathit{L}}^2(\Omega))\subset{\bm{\mathit{H}}}^{1+s}(\Omega)$.
\end{assumption}

\section{The discrete problem}
\label{sec:The discrete problem}

In this section, we will introduce the discrete eigenvalue problem. Based on Theorem \ref{1220gg}(discrete $H^1$-stability of numerical displacement), 
we provide a better convergence of the discrete solution operators in Theorem \ref{1221a}. The spectrum of the discrete solution operator is presented in Theorem \ref{spectrum of T_h}.
\subsection{The discrete eigenvalue problem}
\label{sec: The discrete eigenvalue problem}
We introduce the discrete counterpart of problem ($\ref{0112a}$): Find $\lambda_h\in\mathbb{R}$ and $\mathbf{0}\neq(\underline{\bm{\sigma}}_h,\mathbf{u}_h,\underline{\bm{\rho}}_h)\in\underline{\bm{\mathit{V}}}^h\times\bm{\mathit{W}}^h\times\underline{\bm{\mathit{A}}}^h$ such that
\begin{subequations}
	\label{aa}
	\begin{align}		
		(\mathcal{A}\underline{\bm{\sigma}}_h,\underline{\bm{\upsilon}}_h)_{\Omega}+(\mathbf{u}_h,\textbf{div}\underline{\bm{\upsilon}}_h)_{\Omega}+(\underline{\bm{\rho}}_h,\underline{\bm{\upsilon}}_h)_{\Omega}&=0 \\
		-(\rho_S^{-1}\textbf{div}\underline{\bm{\sigma}}_h,\bm{\omega}_h)_{\Omega}&=\lambda_h(\mathbf{u}_h,\bm{\omega}_h)_{\Omega} \\
		(\underline{\bm{\sigma}}_h,\underline{\bm{\eta}}_h)_{\Omega}&=0  
	\end{align}	
\end{subequations}
for all $(\underline{\bm{\upsilon}}_h,\bm{\omega}_h,\underline{\bm{\eta}}_h)\in\underline{\bm{\mathit{V}}}^h\times\bm{\mathit{W}}^h\times\underline{\bm{\mathit{A}}}^h$. Here $\underline{\bm{\mathit{V}}}^h:=\{\underline{\bm{\upsilon}}\in\underline{\bm{\mathit{Y}}}:\underline{\bm{\upsilon}}|_K\in\underline{\bm{\mathit{V}}}(K)$ for all $K\in\mathcal{T}_h\}$,
$\bm{\mathit{W}}^h:=\{\bm{\omega}\in\bm{\mathit{L}}^2(\Omega):\bm{\omega}|_K\in\bm{\mathit{W}}(K)\hspace{0.5em} {\rm for \hspace{0.5em} all}\hspace{0.5em} K\in\mathcal{T}_h\},
\underline{\bm{\mathit{A}}}^h:=\{\underline{\bm{\upsilon}}\in\underline{\bm{\mathit{L}}}^2(\Omega):\underline{\bm{\upsilon}}|_K\in\underline{\bm{\mathit{A}}}(K)\hspace{0.5em} {\rm for\hspace{0.5em} all}\hspace{0.5em} K\in\mathcal{T}_h\}.$
The local spaces are
\begin{equation*}
	\begin{aligned}
		\underline{\bm{\mathit{V}}}(K)&:=\underline{\bm{\mathit{V}}}^k(K):=\underline{\bm{\mathit{p}}}^{k+1}(K)\rm\hspace{0.5em}on\hspace{0.5em}special\hspace{0.5em}grids, \\
		\bm{\mathit{W}}(K)&:=\bm{\mathit{W}}^k(K):=\bm{\mathit{p}}^k(K),\quad
		\underline{\bm{\mathit{A}}}(K):=\underline{\bm{\mathit{A}}}^k(K):=\{\underline{\bm{\eta}}\in\underline{\bm{\mathit{p}}}^{k+1}(K):\underline{\bm{\eta}}+\underline{\bm{\eta}}^t=\mathbf{0}\},
	\end{aligned}
\end{equation*}
where the special grids on $\mathcal{T}_h$ are defined by \cite[Assumption 6.2]{JJ2012}: In either the
two-dimensional(triangular) or the three-dimensional(tetrahedral) case, assume the following.

(1) The grid is obtained from a quasi-uniform grid after splitting each of its elements into $n+1$
elements (Here $n$ is the dimension) by connecting the vertices of the element to its barycentre.

(2) In the two-dimensional case assume $k\geq0$; In the three-dimensional case assume $k\geq1$.

Notice that $\underline{\bm{\mathit{V}}}^h\times\bm{\mathit{W}}^h\times\underline{\bm{\mathit{A}}}^h$ is the mixed finite element of the family introduced for linear elasticity by J.Gopalakrishnan and J.Guzm$\rm \acute{a}$n (see \cite[section 6]{JJ2012}).
\begin{remark}
	\label{added_a}
	We point out that we can obtain the same formulations as \cite[Problem 4.1]{SDR2013} by a simple algebraic operation in (\ref{aa}):
	\begin{equation*}
		\label{aa_1}
		\begin{aligned}	
			(\rho_S^{-1}\textbf{div}\underline{\bm{\sigma}}_h,\textbf{div}\underline{\bm{\upsilon}}_h)_{\Omega}&=\lambda_h((\mathcal{A}\underline{\bm{\sigma}}_h,\underline{\bm{\upsilon}}_h)_{\Omega}+(\underline{\bm{\rho}}_h,\underline{\bm{\upsilon}}_h)_{\Omega}) \\
			\lambda_h(\underline{\bm{\sigma}}_h,\underline{\bm{\eta}}_h)_{\Omega}&=0  
		\end{aligned}	
	\end{equation*}
	for all $(\underline{\bm{\upsilon}}_h,\underline{\bm{\eta}}_h)\in\underline{\bm{\mathit{V}}}^h\times\underline{\bm{\mathit{A}}}^h$.	We use the finite element on special grids described as above while \cite{SDR2013} utilizes the lowest order AFW element on arbitrary quasi-uniform grids. However, based on the equivalence of the mixed formulations, the discrete $H^1$ stability properties and error estimates obtained by our mixed method can also be applied to \cite{SDR2013}.
\end{remark}

Let us now recall some well-known approximation properties of the finite element
spaces introduced above. Given $t>0$, the tensorial version of the BDM-interpolation operator (see \cite{BF1991}) $\mathbf{\Pi}_h: \underline{\bm{\mathit{H}}}^t(\Omega)\cap\underline{\bm{\mathit{Y}}}\to \underline{\bm{\mathit{V}}}^h$ is characterized by the following identities:
\begin{equation}
	\label{BDM}
	\begin{aligned} \int_F(\mathbf{\Pi}_h\underline{\bm{\mathit{\tau}}})\mathbf{n}_F\cdot\bm{\mathit{p}}&=\int_F\underline{\bm{\mathit{\tau}}}\mathbf{n}_F\cdot\bm{\mathit{p}},\hspace{0.5em}\forall\bm{\mathit{p}}\in\bm{\mathit{p}}^{k+1}(F),\hspace{0.5em} {\rm for \hspace{0.5em} all\hspace{0.5em} faces}\hspace{0.5em} F \hspace{0.5em} {\rm of} \hspace{0.5em} K,\\
		\int_K \mathbf{\Pi}_h\underline{\bm{\mathit{\tau}}}\cdot\underline{\bm{\upsilon}}&=\int_K\underline{\bm{\mathit{\tau}}}\cdot\underline{\bm{\upsilon}},\quad\forall\underline{\bm{\mathit{\upsilon}}}\in\underline{\bm{\mathit{N}}}^k(K).
	\end{aligned}
\end{equation}
Here $\underline{\bm{\mathit{N}}}^k(K)$ is defined by
$\underline{\bm{\mathit{N}}}^k(K):=\underline{\bm{\mathit{p}}}^{k-1}(K)+\underline{\bm{\mathit{S}}}^{k}(K),$
where $\underline{\bm{\mathit{S}}}^{k}(K):=\{\underline{\bm{\upsilon}}\in \underline{\widetilde{{\bm{\mathit{p}}}}}^k(K):\underline{\bm{\upsilon}}\mathbf{x}=\mathbf{0}\}$
for $k\geq1$. The entries of  $\underline{\widetilde{{\bm{\mathit{p}}}}}^k(K)$ are the homogeneous polynomials of degree $k$.
The following commuting diagram property holds(see \cite{BF1991}):
\begin{equation}
	\label{hh}
	\textbf{div}\mathbf{\Pi}_h\underline{\bm{\tau}}=\bm{\mathit{P}}\textbf{div}\underline{\bm{\tau}},\quad\forall \underline{\bm{\tau}}\in\underline{\bm{\mathit{H}}}^t(\Omega)\cap\underline{\bm{\mathit{H}}}(\textbf{div};\Omega)
\end{equation}
where $\bm{\mathit{P}}: \bm{\mathit{L}}^2(\Omega)\to \bm{\mathit{W}}^h$ is the $\bm{\mathit{L}}^2$-orthogonal projector. In addition, it's well known that there exists $C>0$, independent of $h$, such that for each $\underline{\bm{\tau}}\in\underline{\bm{\mathit{H}}}^t(\Omega)\cap\underline{\bm{\mathit{H}}}(\textbf{div};\Omega)$ (see \cite{BBF2013}): 
\begin{equation}
	\label{ii}
	||\underline{\bm{\tau}}-\mathbf{\Pi}_h\underline{\bm{\tau}}||_{0,\Omega}\leq Ch^{\rm min\{t, k+2\}}||\underline{\bm{\tau}}||_{t,\Omega},\quad \forall t>1/2.
\end{equation}
For less regular tensorial fields, we have the following error estimate (see \cite[Theorem 3.16]{H2002})
\begin{equation}
	\label{jj}
	||\underline{\bm{\tau}}-\mathbf{\Pi}_h\underline{\bm{\tau}}||_{0,\Omega}\leq Ch^t(||\underline{\bm{\tau}}||_{t,\Omega}+||\textbf{div}\underline{\bm{\tau}}||_{0,\Omega}),\quad \forall t\in(0,1/2].
\end{equation}
And we denote $\underline{\bm{\mathit{P}}}: \underline{\bm{\mathit{Q}}}\to \underline{\bm{\mathit{A}}}^h$ the orthogonal projector with respect to the $\underline{\bm{\mathit{L}}}^2(\Omega)$-norm. Then for any $t>0$, we have
\begin{equation}
	\label{kk}
	||\underline{\bm{\rho}}-\underline{\bm{\mathit{P}}}\underline{\bm{\rho}}||_{0,\Omega}\leq Ch^{\rm min\{t,k+2\}}||\underline{\bm{\rho}}||_{t,\Omega} \quad \forall\underline{\bm{\rho}}\in\underline{\bm{\mathit{H}}}^t(\Omega)\cap\underline{\bm{\mathit{Q}}},
\end{equation}
\begin{equation}
	\label{ll}
	||\mathbf{u}-\bm{\mathit{P}}\mathbf{u}||_{0,\Omega}\leq Ch^{\rm min\{t,k+1\}}||\mathbf{u}||_{t,\Omega}\quad\forall \mathbf{u}\in{\bm{\mathit{H}}}^t(\Omega).
\end{equation}

For problem (\ref{aa}), the discrete version of the operator $\bm{\mathit{T}}$ is denoted as 
\begin{equation*}
	\begin{aligned}
		\bm{\mathit{T}}_h: \bm{\mathit{L}}^2(\Omega)&\to \bm{\mathit{W}}^h\\
		\bm{\mathit{f}}&\mapsto\mathbf{u}^{\mathit{f}}_h
	\end{aligned}	
\end{equation*}
where $\mathbf{u}^{\mathit{f}}_h$, also with $(\underline{\bm{\sigma}}^{\mathit{f}}_h,\underline{\bm{\rho}}^{\mathit{f}}_h)$ is the solution of the following discrete source problem:
\begin{subequations}
	\label{mm}
	\begin{align}		
		(\mathcal{A}\underline{\bm{\sigma}}^{\mathit{f}}_h,\underline{\bm{\upsilon}})_{\Omega}+(\mathbf{u}^{\mathit{f}}_h,\textbf{div}\underline{\bm{\upsilon}})_{\Omega}+(\underline{\bm{\rho}}^{\mathit{f}}_h,\underline{\bm{\upsilon}})_{\Omega}&=0 \label{mm_a}\\
		-(\rho_S^{-1}\textbf{div}\underline{\bm{\sigma}}^{\mathit{f}}_h,\bm{\omega})_{\Omega}&=(\bm{\mathit{f}},\bm{\omega})_{\Omega} \label{mm_b} \\
		(\underline{\bm{\sigma}}^{\mathit{f}}_h,\underline{\bm{\eta}})_{\Omega}&=0  
	\end{align}	
\end{subequations}
for all $(\underline{\bm{\upsilon}},\bm{\omega},\underline{\bm{\eta}})\in\underline{\bm{\mathit{V}}}^h\times\bm{\mathit{W}}^h\times\underline{\bm{\mathit{A}}}^h$.

By \cite[Theorem 6.3]{JJ2012}, on the special grids, (Note that \cite{JJ2012} gives the stability with a zero boundary condition, but combining the commuting diagram in \cite{JW2012}, we can also obtain the stability with the mixed boundary conditions in this paper) we have
\begin{equation}
	\label{pp}
	\sup\limits_{\substack{(\underline{\bm{\upsilon}},\bm{\omega},\underline{\bm{\eta}})\in\\\underline{\bm{\mathit{V}}}^h\times\bm{\mathit{W}}^h\times\underline{\bm{\mathit{A}}}^h}}\frac{A((\underline{\bm{\sigma}},\mathbf{u},\underline{\bm{\rho}}),(\underline{\bm{\upsilon}},\bm{\omega},\underline{\bm{\eta}}))}{||(\underline{\bm{\upsilon}},\bm{\omega},\underline{\bm{\eta}})||}\geq\beta||(\underline{\bm{\sigma}},\mathbf{u},\underline{\bm{\rho}})||,\quad\forall (\underline{\bm{\sigma}},\mathbf{u},\underline{\bm{\rho}})\in\underline{\bm{\mathit{V}}}^h\times\bm{\mathit{W}}^h\times\underline{\bm{\mathit{A}}}^h
\end{equation}
with $\beta>0$ independent of $h$ and $\lambda_S$, $||(\underline{\bm{\upsilon}},\bm{\omega},\underline{\bm{\eta}})||^2=||\underline{\bm{\upsilon}}||^2_{\underline{\bm{\mathit{H}}}(\textbf{div};\Omega)}+||\bm{\omega}||_{0,\Omega}^2+||\underline{\bm{\eta}}||_{0,\Omega}^2$. Indeed, 

(i) the discrete inf-sup condition: 
$\exists\beta^{\star}>0$, independent of $h$, such that
\begin{equation}
	\label{nnn}
	\sup\limits_{\underline{\bm{\tau}}\in\underline{\bm{\mathit{V}}}^h}\frac{({\bm{\omega}}_h,\textbf{div}\underline{\bm{\tau}})_{\Omega}+(\underline{\bm{\tau}},\underline{\mathbf{s}}_h)_{\Omega}}{||\underline{\bm{\tau}}||_{\underline{\bm{\mathit{H}}}(\textbf{div};\Omega)}}\geq\beta^{\star}(||\underline{\mathbf{s}}_h||_{0,\Omega}+||\bm{\omega}_h||_{0,\Omega}),\quad\forall (\bm{\omega}_h,\underline{\mathbf{s}}_h)\in\bm{\mathit{W}}^h\times\underline{\bm{\mathit{A}}}^h	
\end{equation}

(ii) the coercivity in the kernel condition
$(\mathcal{A}\underline{\bm{\tau}},\underline{\bm{\tau}})\geq C||\underline{\bm{\tau}}||^2_{\underline{\bm{\mathit{H}}}(\textbf{div};\Omega)}$,
for all $\underline{\bm{\tau}}$ in the kernel given by $\bm{\mathit{K}}$:= $\{\underline{\bm{\tau}}\in\underline{\bm{\mathit{V}}}^h: (\bm{\omega}_h,\textbf{div}\underline{\bm{\tau}})_{\Omega}+(\underline{\bm{\eta}}_h,\underline{\bm{\tau}})_{\Omega}=0\hspace{0.5em} {\rm for\hspace{0.5em} all\hspace{0.5em}} \bm{\omega}_h\in\bm{\mathit{W}}^h\hspace{0.5em} {\rm and\hspace{0.5em} all\hspace{0.5em}} \underline{\bm{\eta}}_h\in\underline{\bm{\mathit{A}}}^h\}.$ Here the constant $C$ doesn't depend on $\lambda_S$. The lemma below explains the independence.
\begin{lemma}
	\label{lemma2.2}
	There exists a constant $C>0$, depending on $\mu_s$ and $\Omega$ (but not on $\lambda_S$), such that 
	$
	(\mathcal{A}\underline{\bm{\tau}},\underline{\bm{\tau}})_{\Omega}\geq C||\underline{\bm{\tau}}||^2_{\underline{\bm{\mathit{H}}}(\textbf{div};\Omega)}$, for all $\underline{\bm{\tau}}\in\bm{\mathit{K}}$.	
\end{lemma}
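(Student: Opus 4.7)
The plan is to extract two pieces of information from membership in $\bm{\mathit{K}}$, namely that $\textbf{div}\underline{\bm{\tau}}=\mathbf{0}$ and that $\underline{\bm{\tau}}$ is symmetric, and then combine a classical ``trace vs.\ deviator'' argument for symmetric tensors to produce the $\lambda_S$-independent coercivity bound.

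First I would exploit the local polynomial structure of $\underline{\bm{\mathit{V}}}^h$. Since $\underline{\bm{\mathit{V}}}^h|_K=\underline{\bm{\mathit{p}}}^{k+1}(K)$, applying $\textbf{div}$ row-wise gives $\textbf{div}\,\underline{\bm{\tau}}\in\bm{\mathit{p}}^k(K)=\bm{\mathit{W}}(K)$, so $\textbf{div}\,\underline{\bm{\tau}}\in\bm{\mathit{W}}^h$. Taking $\bm{\omega}_h=\textbf{div}\,\underline{\bm{\tau}}$ and $\underline{\bm{\eta}}_h=\mathbf{0}$ in the defining relation of $\bm{\mathit{K}}$ yields $\|\textbf{div}\,\underline{\bm{\tau}}\|_{0,\Omega}^2=0$. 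Similarly, the skew-symmetric part $\tfrac12(\underline{\bm{\tau}}-\underline{\bm{\tau}}^t)$ lies in $\underline{\bm{\mathit{p}}}^{k+1}(K)$ and is skew, hence belongs to $\underline{\bm{\mathit{A}}}(K)$. Choosing $\bm{\omega}_h=\mathbf{0}$ and $\underline{\bm{\eta}}_h=\tfrac12(\underline{\bm{\tau}}-\underline{\bm{\tau}}^t)$ forces $\underline{\bm{\tau}}=\underline{\bm{\tau}}^t$. Thus every $\underline{\bm{\tau}}\in\bm{\mathit{K}}$ is a divergence-free symmetric tensor.

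Next I would establish the key inequality $\|\mathrm{tr}\,\underline{\bm{\tau}}\|_{0,\Omega}\le C(\Omega)\|\underline{\bm{\tau}}^D\|_{0,\Omega}$ for such $\underline{\bm{\tau}}$. Writing $\mathrm{tr}\,\underline{\bm{\tau}}=c+\psi$ with $\psi$ having zero mean (if needed), I invoke surjectivity of $\textbf{div}:\bm{\mathit{H}}^1_{0,\Gamma_0}(\Omega)\to\bm{\mathit{L}}^2(\Omega)$ (the boundary-condition compatible Bogovskii-type lifting, valid because $\Gamma_0$ has positive measure) to find $\bm{\mathit{v}}\in\bm{\mathit{H}}^1_{0,\Gamma_0}(\Omega)$ with $\textbf{div}\,\bm{\mathit{v}}=\psi$ and $\|\bm{\mathit{v}}\|_{1,\Omega}\le C\|\psi\|_{0,\Omega}$. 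Using symmetry of $\underline{\bm{\tau}}$ in the integration by parts together with $\underline{\bm{\tau}}\mathbf{n}|_{\Gamma_1}=\mathbf{0}$,
\begin{equation*}
(\mathrm{tr}\,\underline{\bm{\tau}},\textbf{div}\,\bm{\mathit{v}})_\Omega=n(\underline{\bm{\tau}},\underline{\bm{\epsilon}}(\bm{\mathit{v}}))_\Omega-n(\underline{\bm{\tau}}^D,\underline{\bm{\epsilon}}(\bm{\mathit{v}})^D)_\Omega-(\mathrm{tr}\,\underline{\bm{\tau}},\textbf{div}\,\bm{\mathit{v}})_\Omega\cdot 0,
\end{equation*}
and the identity $(\underline{\bm{\tau}},\underline{\bm{\epsilon}}(\bm{\mathit{v}}))_\Omega=-(\textbf{div}\,\underline{\bm{\tau}},\bm{\mathit{v}})_\Omega=0$ reduces the left-hand side to $-n(\underline{\bm{\tau}}^D,\underline{\bm{\epsilon}}(\bm{\mathit{v}})^D)_\Omega$, yielding $\|\psi\|_{0,\Omega}\le C\|\underline{\bm{\tau}}^D\|_{0,\Omega}$. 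The constant $c$ is controlled by the same method after lifting a nonzero-mean test function (possible thanks to $\Gamma_1$'s flux contribution, or via the standard orthogonality when $\Gamma_1=\emptyset$ is ruled out in context).

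Finally, I would close the argument by direct calculation:
\begin{equation*}
(\mathcal{A}\underline{\bm{\tau}},\underline{\bm{\tau}})_\Omega=\frac{1}{2\mu_S}\|\underline{\bm{\tau}}^D\|_{0,\Omega}^2+\frac{1}{n(n\lambda_S+2\mu_S)}\|\mathrm{tr}\,\underline{\bm{\tau}}\|_{0,\Omega}^2\ge\frac{1}{2\mu_S}\|\underline{\bm{\tau}}^D\|_{0,\Omega}^2,
\end{equation*}
and then using the previous step together with $\|\underline{\bm{\tau}}\|_{0,\Omega}^2=\|\underline{\bm{\tau}}^D\|_{0,\Omega}^2+\tfrac{1}{n}\|\mathrm{tr}\,\underline{\bm{\tau}}\|_{0,\Omega}^2$ and $\textbf{div}\,\underline{\bm{\tau}}=\mathbf{0}$ to obtain $(\mathcal{A}\underline{\bm{\tau}},\underline{\bm{\tau}})_\Omega\ge C(\mu_S,\Omega)\|\underline{\bm{\tau}}\|_{\underline{\bm{\mathit{H}}}(\textbf{div};\Omega)}^2$. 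The constant depends on $\mu_S$ and on the $\Omega$-dependent lifting constant for $\textbf{div}$, but the $\lambda_S$-term is dropped as a nonnegative contribution, which is precisely what gives independence of $\lambda_S$. The main obstacle I expect is the trace-bound step: verifying that the divergence right-inverse yields an $\Omega$-dependent constant uniform in $\lambda_S$, and carefully handling the boundary data $\underline{\bm{\tau}}\mathbf{n}|_{\Gamma_1}=\mathbf{0}$ so the integration by parts has no boundary terms.
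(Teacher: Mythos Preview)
Your approach is correct and follows the same overall strategy as the paper: drop the nonnegative $\lambda_S$-term so that $(\mathcal{A}\underline{\bm{\tau}},\underline{\bm{\tau}})_\Omega\ge\frac{1}{2\mu_S}\|\underline{\bm{\tau}}^D\|_{0,\Omega}^2$, and then show that for divergence-free symmetric tensors in $\underline{\bm{\mathit{Y}}}$ the deviator controls the full $\underline{\bm{\mathit{H}}}(\textbf{div};\Omega)$-norm. The difference is in how that last step is obtained. The paper does not build the trace-vs-deviator inequality from scratch: it quotes the estimate $\|\underline{\bm{\tau}}_0\|_{0,\Omega}^2\le C(\|\underline{\bm{\tau}}^D\|_{0,\Omega}^2+\|\textbf{div}\underline{\bm{\tau}}\|_{0,\Omega}^2)$ from \cite[Proposition~IV.3.1]{BF1991} (with $\underline{\bm{\tau}}_0$ the trace-mean-free part) and then the bound $\|\underline{\bm{\tau}}\|_{\underline{\bm{\mathit{H}}}(\textbf{div};\Omega)}\le C\|\underline{\bm{\tau}}_0\|_{\underline{\bm{\mathit{H}}}(\textbf{div};\Omega)}$ from \cite[Lemma~2.2]{Pro}, which is exactly where the boundary condition $\underline{\bm{\tau}}\mathbf{n}|_{\Gamma_1}=\mathbf{0}$ is used to control the mean of $\mathrm{tr}\,\underline{\bm{\tau}}$. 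Your Bogovski\u{\i}-lifting argument is in effect a direct, self-contained proof of the combination of those two cited results in the special case $\textbf{div}\,\underline{\bm{\tau}}=\mathbf{0}$; it is more transparent, and your handling of the constant part $c$ via a nonzero-mean lift through $\Gamma_1$ is precisely the mechanism hidden inside \cite[Lemma~2.2]{Pro}. Either way the constant depends only on $\mu_S$ and the geometry of $(\Omega,\Gamma_1)$, and the $\lambda_S$-independence comes from simply discarding the trace term. (Minor note: your displayed identity has a stray ``$\cdot\,0$'' at the end; the intended relation $(\mathrm{tr}\,\underline{\bm{\tau}},\textbf{div}\,\bm{\mathit{v}})_\Omega=n(\underline{\bm{\tau}},\underline{\bm{\epsilon}}(\bm{\mathit{v}}))_\Omega-n(\underline{\bm{\tau}}^D,\underline{\bm{\epsilon}}(\bm{\mathit{v}})^D)_\Omega$ is correct.)
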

\begin{proof}
	For any $\underline{\bm{\tau}}\in\underline{\bm{\mathit{V}}}^h$, by (\ref{1220a}), we have $\mathcal{A}\underline{\bm{\tau}}:\underline{\bm{\tau}}=\frac{1}{2\mu_S}\underline{\bm{\tau}}^D:\underline{\bm{\tau}}^D+\frac{(tr\underline{\bm{\tau}})^2}{n(n\lambda_S+2\mu_S)}$. Then clearly 
	$
	(\mathcal{A}\underline{\bm{\tau}},\underline{\bm{\tau}})\geq\frac{1}{2\mu_S}||\underline{\bm{\tau}}^D||_{0,\Omega}^2 
	$. Let $\underline{\bm{\tau}}_0:=\underline{\bm{\tau}}-\frac{1}{n|\Omega|}(\int_{\Omega}tr\underline{\bm{\tau}})\bm{\mathit{I}}$. For $n=2$, it is proved in \cite[Proposition IV.3.1]{BF1991} that there exists a constant $C>0$, such that
	\begin{equation}
		\label{xb}
		||\underline{\bm{\tau}}_0||_{0,\Omega}^2\leq C(||\underline{\bm{\tau}}^D||_{0,\Omega}^2+||\textbf{div}\underline{\bm{\tau}}||_{0,\Omega}^2)	 \quad \forall \underline{\bm{\tau}}\in\underline{\bm{\mathit{V}}}^h
	\end{equation}
	The same proof also runs for $n=3$. On the other hand, in terms of \cite[Lemma 2.2]{Pro}, we directly know that for a bounded and simply connected domain in $\mathbb{R}^2$ with Lipschitz-continuous boundary, there exists $C>0$, only depending on $\Omega$ and $\Gamma_1$, such that
	\begin{equation}
		\label{xc}
		||\underline{\bm{\tau}}||_{\underline{\bm{\mathit{H}}}(\textbf{div};\Omega)}^2\leq C||\underline{\bm{\tau}}_0||_{\underline{\bm{\mathit{H}}}(\textbf{div};\Omega)}^2	 \quad \forall \underline{\bm{\tau}}\in\underline{\bm{\mathit{V}}}^h\subset\underline{\bm{\mathit{Y}}}		
	\end{equation}
	The same proof also runs for a bounded and connected Lipschitz-domain in $\mathbb{R}^2$ and $\mathbb{R}^3$.
	Then by (\ref{xb}), (\ref{xc}) and the fact that $\textbf{div}\underline{\bm{\tau}}_0=\textbf{div}\underline{\bm{\tau}}$, we obtain
	$||\underline{\bm{\tau}}||_{\underline{\bm{\mathit{H}}}(\textbf{div};\Omega)}^2\leq C(||\underline{\bm{\tau}}^D||_{0,\Omega}^2+||\textbf{div}\underline{\bm{\tau}}||_{0,\Omega}^2)$.
	Thus, for all $\underline{\bm{\tau}}\in\bm{\mathit{K}}$(i.e. $\textbf{div}\underline{\bm{\tau}}=0$, $\underline{\bm{\tau}}$ is symmetric), we have
	$(\mathcal{A}\underline{\bm{\tau}},\underline{\bm{\tau}})_{\Omega}\geq\frac{1}{2\mu_S}||\underline{\bm{\tau}}^D||_{0,\Omega}^2\geq C||\underline{\bm{\tau}}||_{\underline{\bm{\mathit{H}}}(\textbf{div};\Omega)}^2,$
	where $C$ depends on $\mu_S$ and $\Omega$, but not on $\lambda_S$.
\end{proof} Therefore, (\ref{pp}) holds.
As the continuous case, $(\lambda_h,\underline{\bm{\sigma}}_h,\mathbf{u}_h,\underline{\bm{\rho}}_h)\in \mathbb{R}\times\underline{\bm{\mathit{V}}}^h\times\bm{\mathit{W}}^h\times\underline{\bm{\mathit{A}}}^h$ solves (\ref{aa}) if and only if $(\mu_h,\mathbf{u}_h)$ with $\mu_h=\frac{1}{\lambda_h}$ is an eigenpair of $\bm{\mathit{T}}_h$ and  $(\underline{\bm{\sigma}}_h^{\mathit{f}},\underline{\bm{\rho}}_h^{\mathit{f}})=\frac{1}{\lambda_h}(\underline{\bm{\sigma}}_h, \underline{\bm{\rho}}_h)$. We define the operators $\bm{\mathit{T}}_M$ and $\bm{\mathit{T}}_{M,h}$ corresponding to $\bm{\mathit{T}}$ and $\bm{\mathit{T}}_h$ as follows:
\begin{equation*}
	\begin{aligned}
		\bm{\mathit{T}}_M: \bm{\mathit{L}}^2(\Omega)&\rightarrow \underline{\bm{\mathit{L}}}^2(\Omega)\times\bm{\mathit{L}}^2(\Omega)\times\underline{\bm{\mathit{Q}}}\\
		\bm{\mathit{f}}&\mapsto \begin{Bmatrix}
			&\underline{\bm{\sigma}}^{\mathit{f}}
			&\mathbf{u}^{\mathit{f}} 
			&\underline{\bm{\rho}}^{\mathit{f}}
		\end{Bmatrix}
	\end{aligned}
\end{equation*}
Obviously, under proper regularity assumptions, $\bm{\mathit{T}}_M$ is compact. $\bm{\mathit{T}}_{M,h}$ is defined by $
		\bm{\mathit{T}}_{M,h}: \bm{\mathit{L}}^2(\Omega)\rightarrow \underline{\bm{\mathit{V}}}^h\times\bm{\mathit{W}}^h\times\underline{\bm{\mathit{A}}}^h(i.e. 
		\bm{\mathit{T}}_{M,h}\bm{\mathit{f}}= 
			(\underline{\bm{\sigma}}^{\mathit{f}}_h,
			\mathbf{u}^{\mathit{f}}_h, 
			\underline{\bm{\rho}}^{\mathit{f}}_h)\in\underline{\bm{\mathit{V}}}^h\times\bm{\mathit{W}}^h\times\underline{\bm{\mathit{A}}}^h
		)$.

\subsection{Discrete $H^1$-stability of numerical displacement} \label{sec: Discrete $H^1$-stability of numerical displacement} We recalled the definition of $||\mathbf{u}_h||_{1,h}$ in (\ref{0112b}), which is
different from that in \cite{GQ2018} since we discuss the mixed boundary conditions here.
We will show in Theorem \ref{1220gg} that $||\mathbf{u}_h^{\mathit{f}}||_{1,h}$ can be bounded by $||\mathcal{A}\underline{\bm{\sigma}}^{\mathit{f}}_h||_{0,\Omega}$. Before going further, we provide a lemma for the BDM element, which can be used in the later proof. A similar lemma is \cite[Lemma 3.1]{GQ2018} and hence we omit the proofs.
\begin{lemma}
	\label{1220mm}
	For each element $K\in\mathcal{T}_h$, given $\underline{\bm{\sigma}}\in\underline{\bm{\mathit{L}}}^2(K)$, $\mathbf{q}_i\in\bm{\mathit{L}}^2(F_i)$, where $\{F_i\}\in\partial K$, there exists a unique $\underline{\bm{\upsilon}}\in \underline{\bm{\mathit{V}}}^k(K)$ such that
	\begin{subequations}
		\label{1220r}
		\begin{align}
		\int_{F_i}\underline{\bm{\upsilon}}\mathbf{n}_{F_i}\cdot\bm{\mu}ds&=\int_{F_i}\mathbf{q}_i\cdot\bm{\mu}ds\quad\quad\quad \forall \bm{\mu}\in{\bm{\mathit{p}}}^{k+1}(F_i), F_i\in\partial K\\
		\int_K\underline{\bm{\upsilon}}\cdot\underline{\bm{\omega}}dx&=\int_K\underline{\bm{\sigma}}\cdot\underline{\bm{\omega}}dx\quad\quad\quad \forall \underline{\bm{\omega}}\in\underline{\bm{\mathit{N}}}^{k}(K)
		\end{align}
	\end{subequations}
	Here $\underline{\bm{\mathit{N}}}^{k}(K)$ is defined under (\ref{BDM}). More importantly,
	$||\underline{\bm{\upsilon}}||^2_{0,K}\leq C(||\underline{\bm{\sigma}}||^2_{0,K}+h\sum_{F_i\in\partial K}$ $||\mathbf{q}_i||^2_{L^2(F_i)}),$
	where C is independent of $h$ and $K$.
\end{lemma}
Next, we prove that $||\mathbf{u}_h^{\mathit{f}}||_{1,h}$  can be controlled by $||\mathcal{A}\underline{\bm{\sigma}}^{\mathit{f}}_h||_{0,\Omega}$, which plays a key role in the proof of Theorem \ref{1221a}. Actually, a very simple example similar to this type of control relationship can be found in  \cite[Theorem 3.2]{GQ2018} for the Poisson problem. However, the control relationship is not obvious in our case. 
\begin{theorem}
	\label{1220gg}
	For (\ref{mm}), there exists a constant $C$ independent of $\mathcal{A}^{-1}$ and $h$ such that
	$
		||\mathbf{u}_h^{\mathit{f}}||_{1,h}^2\leq C||\mathcal{A}\underline{\bm{\sigma}}^{\mathit{f}}_h||_{0,\Omega}^2.
	$
\end{theorem}

\begin{proof}
	By using integration by parts for equation (\ref{mm_a}), we have
	\begin{equation}
		\label{1220ii}
		\begin{aligned}
			0&=(\mathcal{A}\underline{\bm{\sigma}}_h^{\mathit{f}},\underline{\bm{\upsilon}})_{\Omega}+(\mathbf{u}_h^{\mathit{f}},\textbf{div}\underline{\bm{\upsilon}})_{\Omega}+(\underline{\bm{\rho}}_h^{\mathit{f}},\underline{\bm{\upsilon}})_{\Omega}\\
			&=(\mathcal{A}\underline{\bm{\sigma}}_h^{\mathit{f}},\underline{\bm{\upsilon}})_{\Omega}-\sum_{K\in\mathcal{T}_h}(\nabla\mathbf{u}_h^{\mathit{f}},\underline{\bm{\upsilon}})_K+\sum_{F\in\varepsilon^{I}\cup\varepsilon^{D}}\left \langle[\![\mathbf{u}_h^{\mathit{f}}]\!],\underline{\bm{\upsilon}}\mathbf{n}\right \rangle_F+(\underline{\bm{\rho}}_h^{\mathit{f}},\underline{\bm{\upsilon}})_{\Omega}
		\end{aligned}
	\end{equation}
	On each element $K$, let $\underline{\bm{\upsilon}}_1\in\underline{\bm{\mathit{V}}}^h$ (Notice that $\underline{\bm{\upsilon}}_1\mathbf{n}|_{\Gamma_1}=\mathbf{0}$) be the projection such that
	\begin{equation}
		\label{1220jj}
		\int_{F_i}\underline{\bm{\upsilon}}_1\mathbf{n}_{F_i}\cdot\bm{\mu}ds=\int_{F_i}(-\frac{1}{h_{F_i}}[\![\mathbf{u}_h^{\mathit{f}}]\!])\cdot\bm{\mu}ds\quad\quad \forall \bm{\mu}\in{\bm{\mathit{p}}}^{k+1}(F_i), F_i\in\partial K\cap(\varepsilon^{I}\cup\varepsilon^{D})
	\end{equation}
	\begin{equation}
		\label{1220kk}
		\int_K\underline{\bm{\upsilon}}_1\cdot\underline{\bm{\omega}}dx=\int_K\underline{\bm{\epsilon}}(\mathbf{u}_h^{\mathit{f}})\cdot\underline{\bm{\omega}}dx\quad\quad\forall \underline{\bm{\omega}}\in\underline{\bm{\mathit{N}}}^{k}(K)\supseteq\underline{\bm{\mathit{p}}}^{k-1}(K)
	\end{equation}
	Setting $\bm{\mu}=[\![\mathbf{u}_h^{\mathit{f}}]\!]\in\bm{\mathit{p}}^{k+1}(F_i)$, $\underline{\bm{\omega}}=\nabla\mathbf{u}_h^{\mathit{f}}\in\underline{\bm{\mathit{p}}}^{k-1}(K)$, and substituting the above two equalities into (\ref{1220ii}), we get
	\begin{equation}
		\label{1220ll}
		(\mathcal{A}\underline{\bm{\sigma}}_h^{\mathit{f}},\underline{\bm{\upsilon}}_1)_{\Omega}=\sum_{K\in\mathcal{T}_h}||\underline{\bm{\epsilon}}(\mathbf{u}_h^{\mathit{f}})||^2_{0,K}+\sum_{F\in\varepsilon^{I}\cup\varepsilon^{D}}\frac{1}{h_F}||[\![\mathbf{u}_h^{\mathit{f}}]\!]||^2_{L^2(F)}-(\underline{\bm{\rho}}_h^{\mathit{f}},\underline{\bm{\upsilon}}_1)_{\Omega}	
	\end{equation}
	Then in terms of Lemma \ref{1220mm}, we have
	\begin{equation}
		\label{nn}
		||\underline{\bm{\upsilon}}_1||^2_{0,K}\leq C(||\underline{\bm{\epsilon}}(\mathbf{u}_h^{\mathit{f}})||^2_{0,K}+\sum_{F\in\partial K\cap(\varepsilon^{I}\cup\varepsilon^{D})}\frac{1}{h_F}||[\![\mathbf{u}_h^{\mathit{f}}]\!]||^2_{L^2(F)}).	
	\end{equation}
	Recall that the linear elasticity problem can be associated with the Stokes problem with the Stokes pair of spaces $\bm{\mathit{S}}^h\times R^h\subseteq\bm{\mathit{H}}^1(\Omega)\times\mathit{L}^2(\Omega)$(see \cite{JJ2012}):
	$R^h=\{r\in\mathit{L}^2(\Omega):r|_K\in\mathit{p}^{k+1}(K),\hspace{0.5em} {\rm for\hspace{0.5em} all\hspace{0.5em} elements}\hspace{0.5em} K\},\bm{\mathit{S}}^h= \{\bm{\mathit{S}}\in\bm{\mathit{H}}^1(\Omega):\bm{\mathit{S}}|_K\in\bm{\mathit{p}}^{k+2}(K),\hspace{0.5em} {\rm for\hspace{0.5em} all\hspace{0.5em} elements}\hspace{0.5em} K\}.$
	In the following we will directly use \cite[(5.1); (5.2)]{JJ2012} and notations in \cite{JJ2012}. We recalled \cite[(5.1)]{JJ2012} which is shown in Figure \ref{diagram commutes} (Here {\rm skw}($\cdot$) denotes the operator mapping matrices to their skew-symmetric parts. $\mathit{P}$ denotes the
	$L^2$-orthogonal projection onto $R^h$. $\underline{\bm{\mathit{P}}}$ denotes the $\underline{\bm{\mathit{L}}}^2$-orthogonal projection onto $\underline{\bm{\mathit{A}}}^h$. $\underline{\bm{\mathit{X}}}_2$ is an isomorphic map between $R^h$ and $\underline{\bm{\mathit{A}}}^h$). Notice that the diagram commutes generally hold for the space $\bm{\mathit{S}}^h$ with mixed boundary conditions (i.e. our case: $\underline{\hat{\bm{\mathit{V}}}}^h=\underline{\bm{\mathit{V}}}^h$). And recall \cite[(5.2)]{JJ2012}: for all $\bm{\mathit{s}}^h\in\bm{\mathit{S}}^h$.
	\begin{equation}
		\label{0110g}
		\frac{1}{2}\underline{\bm{\mathit{X}}}_2\underline{\bm{\mathit{P}}}{\rm div}\bm{\mathit{s}}^h=\underline{\bm{\mathit{P}}}{\rm skw}{\rm curl}\bm{\mathit{s}}^h,
	\end{equation}
	\begin{figure}[tbph] 
		\centering 
		\includegraphics[height=2.5cm,width=8cm]{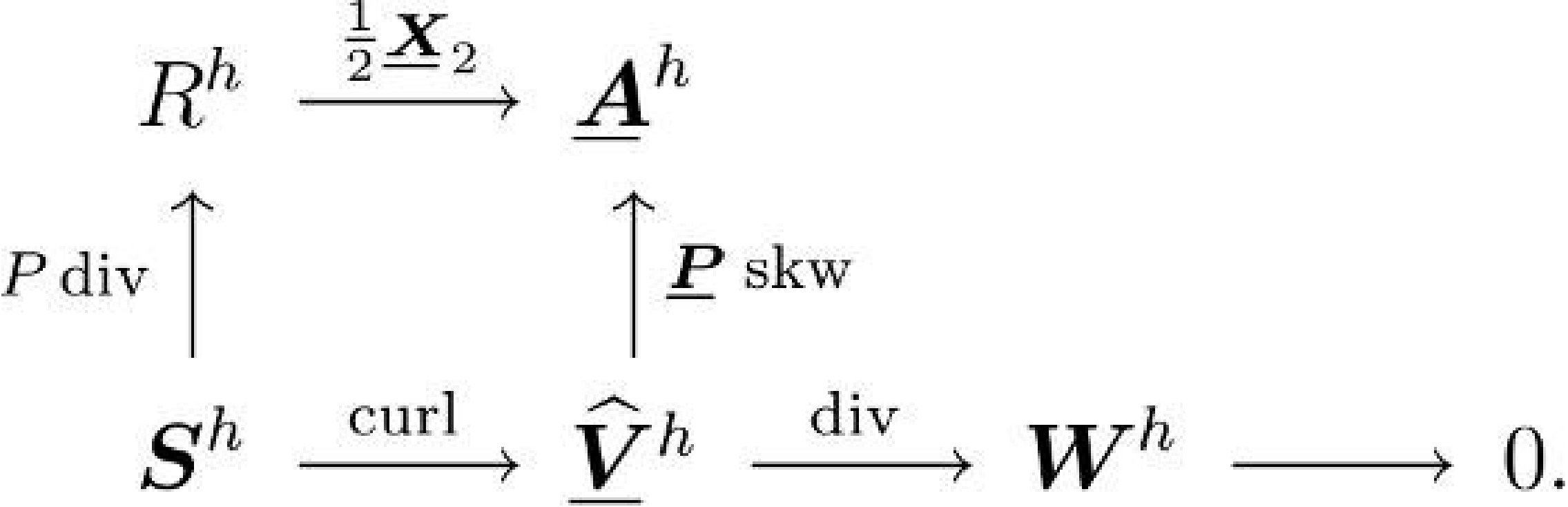} 
		\caption{ diagram commutes}
		\label{diagram commutes}
	\end{figure}
	By \cite{AQ1992}, on the special grids, 
	for $r^h=2\underline{\bm{\mathit{X}}}_2^{-1}\underline{\bm{\mathit{P}}}(-{\rm skw}\underline{\bm{\upsilon}}_1)$, we can find  $\bm{\mathit{s}}^h\in\bm{\mathit{S}}^h$ satisfying
	\begin{equation}
		\label{1220pp}
		\mathit{P}{\rm div}\bm{\mathit{s}}^h=2\underline{\bm{\mathit{X}}}_2^{-1}\underline{\bm{\mathit{P}}}(-{\rm skw}\underline{\bm{\upsilon}}_1), \quad ||\bm{\mathit{s}}^h||_{\mathit{H}^1}\leq C||{\rm skw}\underline{\bm{\upsilon}}_1||_{\mathit{L}^2}.	
	\end{equation}
	Setting $\underline{\bm{\upsilon}}_2={\rm curl}\bm{\mathit{s}}^h$, and by (\ref{0110g}), this implies that
	\begin{equation}
		\label{1220rr}
		\underline{\bm{\mathit{P}}}{\rm skw}\underline{\bm{\upsilon}}_2=\underline{\bm{\mathit{P}}}(-{\rm skw}\underline{\bm{\upsilon}}_1), \quad ||\underline{\bm{\upsilon}}_2||_{\mathit{L}^2}\leq C||\underline{\bm{\upsilon}}_1||_{\mathit{L}^2}			
	\end{equation}
	Above is the two-dimensional case. For the three-dimensional case, in terms of \cite{Z2005}, we can obtain a similar result to (\ref{1220pp}). 
	By analogous analysis, we can obtain the same results as the two dimension. So we omit the details of the three-dimensional case here.
	Finally, setting $\underline{\bm{\upsilon}}=\underline{\bm{\upsilon}}_1+\underline{\bm{\upsilon}}_2$. By using (\ref{1220rr}), (\ref{1220ii}), (\ref{1220ll}) and (\ref{nn}), we get
	\begin{equation}
		\label{1220aaa}
		\sum_{K\in\mathcal{T}_h}||\underline{\bm{\epsilon}}(\mathbf{u}_h^{\mathit{f}})||^2_{0,K}+\sum_{F\in\varepsilon^{I}\cup\varepsilon^{D}}h^{-1}||[\![\mathbf{u}_h^{\mathit{f}}]\!]||^2_{L^2(F)}\leq C||\mathcal{A}\underline{\bm{\sigma}}_h^{\mathit{f}}||_{0,\Omega}^2,
	\end{equation}
	where $C$ doesn't depend on $h$ and $\lambda_S$. Then in terms of the well-known $H^1$ interpolation, we know that there exists $\tilde{\mathbf{u}}_h^{\mathit{f}}\in \{\mathbf{u}\in\bm{\mathit{H}}^1(\Omega): \mathbf{u}|_{\Gamma_0}=\mathbf{0},|\Gamma_0|>0\}\cap \bm{\mathit{p}}^k(\mathcal{T}_h)$ such that
	\begin{equation}
		\label{1220ww}
		\sum_{K\in\mathcal{T}_h}||\nabla\mathbf{u}_h^{\mathit{f}}-\nabla\tilde{\mathbf{u}}_h^{\mathit{f}}||^2_{0,K}\le C\sum_{F\in\varepsilon^{I}\cup\varepsilon^{D}}h_F^{-1}||[\![\mathbf{u}_h^{\mathit{f}}]\!]||^2_{L^2(F)}
	\end{equation}
	Then, combining the triangle inequality, we immediately obtain
	\begin{equation}
		\label{1220yy}			
		||\underline{\bm{\epsilon}}(\tilde{\mathbf{u}}_h^{\mathit{f}})||_{0,\Omega}^2\leq C(\sum_{K\in\mathcal{T}_h}||\underline{\bm{\epsilon}}(\mathbf{u}_h^{\mathit{f}})||^2_{0,K}+\sum_{F\in\varepsilon^{I}\cup\varepsilon^{D}}h_F^{-1}||[\![\mathbf{u}_h^{\mathit{f}}]\!]||^2_{L^2(F)})
	\end{equation}
	By (\ref{1220ww}), (\ref{1220yy}), the triangle inequality and Korn's inequality (see \cite{PG2013}), we obtain
	\begin{equation}
		\label{1220zz}	
		\sum_{K\in\mathcal{T}_h}||\nabla\mathbf{u}_h^{\mathit{f}}||^2_{0,K}
		\le C(\sum_{K\in\mathcal{T}_h}||\underline{\bm{\epsilon}}(\mathbf{u}_h^{\mathit{f}})||^2_{0,K}+\sum_{F\in\varepsilon^{I}\cup\varepsilon^{D}}h_F^{-1}||[\![\mathbf{u}_h^{\mathit{f}}]\!]||^2_{L^2(F)})			
	\end{equation}
	Therefore, in terms of (\ref{1220aaa}) and (\ref{1220zz}), we get the desired assertion.	
\end{proof}	

\begin{remark}
	In fact, in terms of the definition of $||\mathbf{u}_h||_{1,h}$, we have the following discrete Sobolev embedding inequalities
	\begin{equation}
		\label{0109k}
		\left\{
		\begin{aligned}
			||\mathbf{u}_h||_{\bm{\mathit{L}}^p}& \leq C||\mathbf{u}_h||_{1,h} , \hspace{0.5em}{\rm for}\hspace{0.5em} 1\leq p<\infty,\hspace{0.5em} {\rm in\hspace{0.5em} two\hspace{0.5em}dimensional\hspace{0.5em} space,}\\
			||\mathbf{u}_h||_{\bm{\mathit{L}}^p}& \leq C||\mathbf{u}_h||_{1,h} , \hspace{0.5em}{\rm for}\hspace{0.5em} 1\leq p\leq 6,\hspace{0.5em} {\rm in\hspace{0.5em} three\hspace{0.5em}dimensional\hspace{0.5em} space,}
		\end{aligned}
		\right.
	\end{equation}
	where $C$ depends upon the domain $\Omega$, the polynomial degree $k$ and $p$ only.
	Indeed, for $\mathbf{u}_h\in\underline{\bm{\mathit{V}}}^h$, by utilizing the general $H^1$-interpolation, there exists $\tilde{\mathbf{u}}_h\in \{\mathbf{u}\in\bm{\mathit{H}}^1(\Omega): \mathbf{u}|_{\Gamma_0}=\mathbf{0},d\Gamma-{\rm meas}\Gamma_0>0\}\cap \bm{\mathit{p}}^k(\mathcal{T}_h)$ such that$
		\label{0109m}
		\sum_{K\in\mathcal{T}_h}||\nabla\mathbf{u}_h-\nabla\tilde{\mathbf{u}}_h||^2_{0,K}\le C\sum_{F\in\varepsilon^{I}\cup\varepsilon^{D}}\frac{1}{h_F}||[\![\mathbf{u}_h]\!]||^2_{L^2(F)}$.
	Going through the same steps as (\ref{1220ww})-(\ref{1220zz}), we have
	$||\nabla\tilde{\mathbf{u}}_h||_{0,\Omega}^2$ $\leq C(\sum_{K\in\mathcal{T}_h}||\underline{\bm{\epsilon}}(\mathbf{u}_h)||^2_{0,K}$ $+$ $\sum_{F\in\varepsilon^{I}\cup\varepsilon^{D}}$ $h_F^{-1}||[\![\mathbf{u}_h]\!]||^2_{L^2(F)})$.
	Then by the triangle inequality, conventional Sobolev inequalities and the two inequalities above
	\begin{equation*}
		\label{0109l}
		\begin{aligned}	
			||\mathbf{u}_h||_{\bm{\mathit{L}}^p}^2&\leq C(||\mathbf{u}_h-\tilde{\mathbf{u}}_h||_{\bm{\mathit{L}}^p}^2+||\tilde{\mathbf{u}}_h||_{\bm{\mathit{L}}^p}^2)\leq C(\sum_{K\in\mathcal{T}_h}||\nabla(\mathbf{u}_h-\tilde{\mathbf{u}}_h)||_{0,K}^2)+C||\nabla\tilde{\mathbf{u}}_h||_{0,\Omega}^2\\
			&\leq C(\sum_{K\in\mathcal{T}_h}||\underline{\bm{\epsilon}}(\mathbf{u}_h)||^2_{0,K}+\sum_{F\in\varepsilon^{I}\cup\varepsilon^{D}}h_F^{-1}||[\![\mathbf{u}_h]\!]||^2_{L^2(F)})=C ||\mathbf{u}_h||_{1,h}^2
		\end{aligned}	
	\end{equation*}
	Therefore, (\ref{0109k}) is proved. Now in terms of Theorem \ref{1220gg}, we immediately have the following discrete Sobolev embedding inequality for $(\underline{\bm{\sigma}}^{\mathit{f}}_h,\mathbf{u}_h^{\mathit{f}})$
	\begin{equation}
		\label{0109o}
		\left\{
		\begin{aligned}
			||\mathbf{u}_h^{\mathit{f}}||_{\bm{\mathit{L}}^p}& \leq C||\mathcal{A}\underline{\bm{\sigma}}^{\mathit{f}}_h||_{0,\Omega} , \hspace{0.5em}{\rm for}\hspace{0.5em} 1\leq p<\infty,\hspace{0.5em} {\rm in\hspace{0.5em} two\hspace{0.5em}dimensional\hspace{0.5em} space,}\\
			||\mathbf{u}_h^{\mathit{f}}||_{\bm{\mathit{L}}^p}& \leq C||\mathcal{A}\underline{\bm{\sigma}}^{\mathit{f}}_h||_{0,\Omega} , \hspace{0.5em}{\rm for}\hspace{0.5em} 1\leq p\leq 6,\hspace{0.5em} {\rm in\hspace{0.5em} three\hspace{0.5em}dimensional\hspace{0.5em} space,}
		\end{aligned}
		\right.
	\end{equation}
	where $C$ depends upon the domain $\Omega$, the polynomial degree $k$ and $p$.
\end{remark}

\subsection{Convergence of \texorpdfstring{{\boldmath$\{\bm{\mathit{T}}_{M,h}\}$}}{T} to \texorpdfstring{{\boldmath$\bm{\mathit{T}}_M$}}{T} in discrete $H^1$-norm}
\label{sec: Convergence of TM operators}
\begin{theorem}
	\label{1221a}
	Under Assumption \ref{0128b}, for all $s\in(0,s_0]$, there exists a constant $C>0$ independent of $h$ and $\lambda_S$ such that
	\begin{equation}
		\label{1221l}
		||(\bm{\mathit{T}}_{M,h}-\bm{\mathit{T}}_M)\bm{\mathit{f}}||_{1,h}\leq Ch^s||\bm{\mathit{f}}||_{0,\Omega}\quad \forall \bm{\mathit{f}}\in\bm{\mathit{L}}^2(\Omega),
	\end{equation}
	where  $||(\bm{\mathit{T}}_{M,h}-\bm{\mathit{T}}_M)\bm{\mathit{f}}||_{1,h}^2=||\mathcal{A}^{\frac{1}{2}}(\underline{\bm{\sigma}}^{\mathit{f}}-\underline{\bm{\sigma}}^{\mathit{f}}_h)||_{0,\Omega}^2+||\bm{\mathit{P}}\mathbf{u}^{\mathit{f}}-\mathbf{u}^{\mathit{f}}_h||_{1,h}^2+||\underline{\bm{\rho}}^{\mathit{f}}-\underline{\bm{\rho}}^{\mathit{f}}_h||_{0,\Omega}^2$ and $\bm{\mathit{P}}$ is the $\bm{\mathit{L}}^2$-orthogonal projection onto $\bm{\mathit{W}}^h$, which immediately implies $||(\bm{\mathit{T}}_h-\bm{\mathit{T}})\bm{\mathit{f}}||_{0,\Omega}\leq Ch^s||\bm{\mathit{f}}||_{0,\Omega}$. 
\end{theorem}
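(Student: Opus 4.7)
The plan is to cast the error analysis on the discrete side by inserting the canonical projections, bound the stress and rotation part of the error through the discrete inf-sup condition (\ref{pp}), and then invoke Theorem \ref{1220gg} to upgrade the displacement error from $\bm{\mathit{L}}^2$ to the discrete $H^1$ norm. First I would introduce the projected errors
$$\underline{\bm{\tilde\sigma}}:=\mathbf{\Pi}_h\underline{\bm{\sigma}}^{\mathit{f}}-\underline{\bm{\sigma}}^{\mathit{f}}_h,\qquad \tilde{\mathbf{u}}:=\bm{\mathit{P}}\mathbf{u}^{\mathit{f}}-\mathbf{u}^{\mathit{f}}_h,\qquad \underline{\bm{\tilde\rho}}:=\underline{\bm{\mathit{P}}}\underline{\bm{\rho}}^{\mathit{f}}-\underline{\bm{\rho}}^{\mathit{f}}_h,$$
and subtract (\ref{mm}) from (\ref{c}) over the discrete test space. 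Using the commuting-diagram identity (\ref{hh}) together with the defining orthogonalities of $\bm{\mathit{P}}$ and $\underline{\bm{\mathit{P}}}$, the system collapses to a perturbed discrete source problem for $(\underline{\bm{\tilde\sigma}},\tilde{\mathbf{u}},\underline{\bm{\tilde\rho}})\in\underline{\bm{\mathit{V}}}^h\times\bm{\mathit{W}}^h\times\underline{\bm{\mathit{A}}}^h$ whose only right-hand side involves the residuals $\underline{\bm{\sigma}}^{\mathit{f}}-\mathbf{\Pi}_h\underline{\bm{\sigma}}^{\mathit{f}}$ and $\underline{\bm{\rho}}^{\mathit{f}}-\underline{\bm{\mathit{P}}}\underline{\bm{\rho}}^{\mathit{f}}$; the crucial payoff of (\ref{hh}) is that $\textbf{div}\,\underline{\bm{\tilde\sigma}}\equiv\mathbf{0}$.

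Applying the inf-sup bound (\ref{pp}) to this perturbed system then yields, uniformly in $\lambda_S$ and $h$,
$$||\underline{\bm{\tilde\sigma}}||_{\underline{\bm{\mathit{H}}}(\textbf{div};\Omega)}+||\tilde{\mathbf{u}}||_{0,\Omega}+||\underline{\bm{\tilde\rho}}||_{0,\Omega}\;\leq\; C\bigl(||\underline{\bm{\sigma}}^{\mathit{f}}-\mathbf{\Pi}_h\underline{\bm{\sigma}}^{\mathit{f}}||_{0,\Omega}+||\underline{\bm{\rho}}^{\mathit{f}}-\underline{\bm{\mathit{P}}}\underline{\bm{\rho}}^{\mathit{f}}||_{0,\Omega}\bigr),$$
so in particular both $||\mathcal{A}^{1/2}\underline{\bm{\tilde\sigma}}||_{0,\Omega}$ and $||\underline{\bm{\tilde\rho}}||_{0,\Omega}$ are already controlled; however $\tilde{\mathbf{u}}$ has so far only been bounded in $\bm{\mathit{L}}^2$, which is exactly the gap that a traditional inf-sup analysis leaves open.

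To close that gap I would rerun the proof of Theorem \ref{1220gg} on the perturbed first equation with $(\underline{\bm{\sigma}}_h^{\mathit{f}},\mathbf{u}_h^{\mathit{f}},\underline{\bm{\rho}}_h^{\mathit{f}})$ replaced by $(\underline{\bm{\tilde\sigma}},\tilde{\mathbf{u}},\underline{\bm{\tilde\rho}})$. The construction of $\underline{\bm{\upsilon}}_1$ in (\ref{1220jj})--(\ref{1220kk}) sees only the jumps of $\tilde{\mathbf{u}}$ and its elementwise symmetric gradients, and the Stokes-based construction of $\underline{\bm{\upsilon}}_2$ depends only on the skew part of $\underline{\bm{\upsilon}}_1$; both go through unchanged, and Lemma \ref{1220mm} together with (\ref{1220rr}) still delivers $||\underline{\bm{\upsilon}}_1||_{0,\Omega}+||\underline{\bm{\upsilon}}_2||_{0,\Omega}\leq C||\tilde{\mathbf{u}}||_{1,h}$. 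Moreover $(\underline{\bm{\tilde\rho}},\underline{\bm{\upsilon}}_1+\underline{\bm{\upsilon}}_2)_\Omega=0$ exactly as before, because $\underline{\bm{\tilde\rho}}\in\underline{\bm{\mathit{A}}}^h$ and $\underline{\bm{\mathit{P}}}\,\mathrm{skw}(\underline{\bm{\upsilon}}_1+\underline{\bm{\upsilon}}_2)=0$. The two extra right-hand-side pairings $(\mathcal{A}(\underline{\bm{\sigma}}^{\mathit{f}}-\mathbf{\Pi}_h\underline{\bm{\sigma}}^{\mathit{f}}),\underline{\bm{\upsilon}}_1+\underline{\bm{\upsilon}}_2)_\Omega$ and $(\underline{\bm{\rho}}^{\mathit{f}}-\underline{\bm{\mathit{P}}}\underline{\bm{\rho}}^{\mathit{f}},\underline{\bm{\upsilon}}_1+\underline{\bm{\upsilon}}_2)_\Omega$ are absorbed via Cauchy--Schwarz and Young's inequality, producing
$$||\tilde{\mathbf{u}}||_{1,h}\;\leq\; C\bigl(||\mathcal{A}^{1/2}\underline{\bm{\tilde\sigma}}||_{0,\Omega}+||\underline{\bm{\sigma}}^{\mathit{f}}-\mathbf{\Pi}_h\underline{\bm{\sigma}}^{\mathit{f}}||_{0,\Omega}+||\underline{\bm{\rho}}^{\mathit{f}}-\underline{\bm{\mathit{P}}}\underline{\bm{\rho}}^{\mathit{f}}||_{0,\Omega}\bigr).$$

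To finish, Assumption \ref{0128b} supplies $||\underline{\bm{\sigma}}^{\mathit{f}}||_{s,\Omega}+||\mathbf{u}^{\mathit{f}}||_{1+s,\Omega}+||\underline{\bm{\rho}}^{\mathit{f}}||_{s,\Omega}\leq C^{\mathrm{reg}}||\bm{\mathit{f}}||_{0,\Omega}$, so the approximation bounds (\ref{ii})/(\ref{jj}), (\ref{kk}), (\ref{ll}) turn every interpolation residual into $Ch^s||\bm{\mathit{f}}||_{0,\Omega}$; the triangle inequalities $\underline{\bm{\sigma}}^{\mathit{f}}-\underline{\bm{\sigma}}^{\mathit{f}}_h=(\underline{\bm{\sigma}}^{\mathit{f}}-\mathbf{\Pi}_h\underline{\bm{\sigma}}^{\mathit{f}})+\underline{\bm{\tilde\sigma}}$ and $\underline{\bm{\rho}}^{\mathit{f}}-\underline{\bm{\rho}}^{\mathit{f}}_h=(\underline{\bm{\rho}}^{\mathit{f}}-\underline{\bm{\mathit{P}}}\underline{\bm{\rho}}^{\mathit{f}})+\underline{\bm{\tilde\rho}}$ then assemble (\ref{1221l}). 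The $\bm{\mathit{L}}^2$-corollary $||(\bm{\mathit{T}}_h-\bm{\mathit{T}})\bm{\mathit{f}}||_{0,\Omega}\leq Ch^s||\bm{\mathit{f}}||_{0,\Omega}$ follows from (\ref{ll}) together with the discrete Sobolev embedding $||\tilde{\mathbf{u}}||_{0,\Omega}\leq C||\tilde{\mathbf{u}}||_{1,h}$ noted in the remark after Theorem \ref{1220gg}. The hard part will be the third step: one must verify carefully that the delicate Stokes-based construction of $\underline{\bm{\upsilon}}_2$ still satisfies $||\underline{\bm{\upsilon}}_2||_{0,\Omega}\leq C||\underline{\bm{\upsilon}}_1||_{0,\Omega}$ with constants independent of $\lambda_S$ when the first equation is perturbed, and that no hidden $\mathcal{A}^{-1}$ factor sneaks into the absorption of the new right-hand-side terms, so that the final rate is robust in the Lam\'e parameter.
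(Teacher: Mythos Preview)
Your proposal is correct and reaches the same conclusion, but it takes a different route from the paper in how the stress and rotation errors are controlled. You invoke the full discrete inf--sup bound (\ref{pp}) as a black box to get $\|\underline{\bm{\tilde\sigma}}\|_{\underline{\bm{\mathit{H}}}(\textbf{div};\Omega)}+\|\tilde{\mathbf u}\|_{0,\Omega}+\|\underline{\bm{\tilde\rho}}\|_{0,\Omega}$ all at once, and only afterward upgrade the displacement via Theorem~\ref{1220gg}. The paper instead never appeals to (\ref{pp}): it tests the error equations with the errors themselves to obtain the energy identity $(\mathcal{A}\mathbf{e}_{\underline{\bm{\sigma}}^{\mathit f}},\mathbf{e}_{\underline{\bm{\sigma}}^{\mathit f}})=(\mathcal{A}(\underline{\bm{\sigma}}^{\mathit f}-\mathbf{\Pi}_h\underline{\bm{\sigma}}^{\mathit f}),\mathbf{e}_{\underline{\bm{\sigma}}^{\mathit f}})-(\underline{\bm{\sigma}}^{\mathit f}-\mathbf{\Pi}_h\underline{\bm{\sigma}}^{\mathit f},\mathbf{e}_{\underline{\bm{\rho}}^{\mathit f}})$, then invokes Theorem~\ref{1220gg} to bound $\|\mathbf{e}_{\mathbf u^{\mathit f}}\|_{1,h}$ by $\|\mathcal{A}^{1/2}\mathbf{e}_{\underline{\bm{\sigma}}^{\mathit f}}\|$ plus data, uses the special test function of \cite[Proposition~5.1]{JJ2012} (rather than the global inf--sup) to control $\|\mathbf{e}_{\underline{\bm{\rho}}^{\mathit f}}\|$ by $\|\mathbf{e}_{\mathbf u^{\mathit f}}\|_{1,h}+\|\mathcal{A}^{1/2}\mathbf{e}_{\underline{\bm{\sigma}}^{\mathit f}}\|$ plus data, and closes the loop with an arithmetic--geometric-mean inequality. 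Your linear route is shorter; the paper's bootstrap makes the $\lambda_S$-robustness of each individual constant visible and delivers the slightly sharper intermediate estimate $\|\mathcal{A}^{1/2}\mathbf{e}_{\underline{\bm{\sigma}}^{\mathit f}}\|_{0,\Omega}\leq C\|\underline{\bm{\sigma}}^{\mathit f}-\mathbf{\Pi}_h\underline{\bm{\sigma}}^{\mathit f}\|_{0,\Omega}$ without any rotation residual. Your worry about the ``hard third step'' is exactly the step the paper takes for granted in (\ref{1224a}); the perturbation is absorbed just as you describe.

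One minor inaccuracy: the residual $\underline{\bm{\rho}}^{\mathit f}-\underline{\bm{\mathit P}}\underline{\bm{\rho}}^{\mathit f}$ does not in fact appear on the right-hand side of the error equations. For any $\underline{\bm{\tau}}_h\in\underline{\bm{\mathit V}}^h$ one has $\mathrm{skw}\,\underline{\bm{\tau}}_h\in\underline{\bm{\mathit A}}^h$ (both are piecewise $\underline{\bm{\mathit p}}^{k+1}$ with no inter-element continuity), so $(\underline{\bm{\mathit P}}\underline{\bm{\rho}}^{\mathit f},\underline{\bm{\tau}}_h)_\Omega=(\underline{\bm{\rho}}^{\mathit f},\underline{\bm{\tau}}_h)_\Omega$ and only $\underline{\bm{\sigma}}^{\mathit f}-\mathbf{\Pi}_h\underline{\bm{\sigma}}^{\mathit f}$ survives. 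The extra term in your bounds is harmless but can be dropped.
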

\begin{proof}
	In terms of (\ref{c}) and (\ref{hh}), we can obtain that 
	\begin{subequations}
		\label{1221b}
		\begin{align}		
			(\mathcal{A}(\mathbf{\Pi}_h\underline{\bm{\sigma}}^{\mathit{f}}),\underline{\bm{\tau}}_h)_{\Omega}+(\bm{\mathit{P}}\mathbf{u}^{\mathit{f}},\textbf{div}\underline{\bm{\tau}}_h)_{\Omega}+(\underline{\bm{\mathit{P}}}\underline{\bm{\rho}}^{\mathit{f}},\underline{\bm{\tau}}_h)_{\Omega}&=(\mathcal{A}(\mathbf{\Pi}_h\underline{\bm{\sigma}}^{\mathit{f}}-\underline{\bm{\sigma}}^{\mathit{f}}),\underline{\bm{\tau}}_h)_{\Omega} \\
			-(\rho_S^{-1}\textbf{div}(\mathbf{\Pi}_h\underline{\bm{\sigma}}^{\mathit{f}}),\bm{\omega}_h)_{\Omega}=-(\rho_S^{-1}\bm{\mathit{P}}\textbf{div}\underline{\bm{\sigma}}^{\mathit{f}},\bm{\omega}_h)_{\Omega}&=(\bm{\mathit{f}},\bm{\omega}_h)_{\Omega} \\
			(\mathbf{\Pi}_h\underline{\bm{\sigma}}^{\mathit{f}},\underline{\bm{\eta}}_h)_{\Omega}&=(\mathbf{\Pi}_h\underline{\bm{\sigma}}^{\mathit{f}}-\underline{\bm{\sigma}}^{\mathit{f}},\underline{\bm{\eta}}_h)_{\Omega}  
		\end{align}	
	\end{subequations}
	for all $(\underline{\bm{\tau}}_h,\bm{\omega}_h,\underline{\bm{\eta}}_h)\in\underline{\bm{\mathit{V}}}^h\times\bm{\mathit{W}}^h\times\underline{\bm{\mathit{A}}}^h$.
	Recalling (\ref{mm}), we have that 
	\begin{equation}
		\label{1221c}
		A((\underline{\bm{\sigma}}^{\mathit{f}}_h,\mathbf{u}^{\mathit{f}}_h,\underline{\bm{\rho}}^{\mathit{f}}_h),(\underline{\bm{\tau}}_h,\bm{\omega}_h,\underline{\bm{\eta}}_h))= B(\bm{\mathit{f}},\bm{\omega}_h),\quad\forall(\underline{\bm{\tau}}_h,\bm{\omega}_h,\underline{\bm{\eta}}_h)\in\underline{\bm{\mathit{V}}}^h\times\bm{\mathit{W}}^h\times\underline{\bm{\mathit{A}}}^h.
	\end{equation}
	We define $\mathbf{e}_{\underline{\bm{\sigma}}^{\mathit{f}}}=\underline{\bm{\sigma}}^{\mathit{f}}_h-\mathbf{\Pi}_h\underline{\bm{\sigma}}^{\mathit{f}}$, $\mathbf{e}_{\mathbf{u}^{\mathit{f}}}=\mathbf{u}^{\mathit{f}}_h-\bm{\mathit{P}}\mathbf{u}^{\mathit{f}}$, $\mathbf{e}_{\underline{\bm{\rho}}^{\mathit{f}}}=\underline{\bm{\rho}}^{\mathit{f}}_h-\underline{\bm{\mathit{P}}}\underline{\bm{\rho}}^{\mathit{f}}$. Then by subtracting (\ref{1221b}) from (\ref{1221c}), we obtain that
	\begin{subequations}
		\label{1221d}
		\begin{align}		
			(\mathcal{A}\mathbf{e}_{\underline{\bm{\sigma}}^{\mathit{f}}},\underline{\bm{\tau}}_h)_{\Omega}+(\mathbf{e}_{\mathbf{u}^{\mathit{f}}},\textbf{div}\underline{\bm{\tau}}_h)_{\Omega}+(\mathbf{e}_{\underline{\bm{\rho}}^{\mathit{f}}},\underline{\bm{\tau}}_h)_{\Omega}&=(\mathcal{A}(\underline{\bm{\sigma}}^{\mathit{f}}-\mathbf{\Pi}_h\underline{\bm{\sigma}}^{\mathit{f}}),\underline{\bm{\tau}}_h)_{\Omega}\label{1221d_a}\\
			(\textbf{div}\mathbf{e}_{\underline{\bm{\sigma}}^{\mathit{f}}},\bm{\omega}_h)_{\Omega}&=0 \label{1221d_b}\\
			(\mathbf{e}_{\underline{\bm{\sigma}}^{\mathit{f}}},\underline{\bm{\eta}}_h)_{\Omega}&=(\underline{\bm{\sigma}}^{\mathit{f}}-\mathbf{\Pi}_h\underline{\bm{\sigma}}^{\mathit{f}},\underline{\bm{\eta}}_h)_{\Omega}\label{1221d_c}  
		\end{align}	
	\end{subequations}
	for all $(\underline{\bm{\tau}}_h,\bm{\omega}_h,\underline{\bm{\eta}}_h)\in\underline{\bm{\mathit{V}}}^h\times\bm{\mathit{W}}^h\times\underline{\bm{\mathit{A}}}^h$. 
	We choose $\underline{\bm{\tau}}_h=\mathbf{e}_{\underline{\bm{\sigma}}^{\mathit{f}}},\bm{\omega}_h=\mathbf{e}_{\mathbf{u}^{\mathit{f}}},\underline{\bm{\eta}}_h=\mathbf{e}_{\underline{\bm{\rho}}^{\mathit{f}}}$, then
	\begin{equation}
		\label{1221e}
		(\mathcal{A}\mathbf{e}_{\underline{\bm{\sigma}}^{\mathit{f}}},\mathbf{e}_{\underline{\bm{\sigma}}^{\mathit{f}}})_{\Omega}=(\mathcal{A}(\underline{\bm{\sigma}}^{\mathit{f}}-\mathbf{\Pi}_h\underline{\bm{\sigma}}^{\mathit{f}}),\mathbf{e}_{\underline{\bm{\sigma}}^{\mathit{f}}})_{\Omega}-(\underline{\bm{\sigma}}^{\mathit{f}}-\mathbf{\Pi}_h\underline{\bm{\sigma}}^{\mathit{f}},\mathbf{e}_{\underline{\bm{\rho}}^{\mathit{f}}})_{\Omega}	
	\end{equation}
	By virtue of Theorem \ref{1220gg} and the system (\ref{1221d}), we can directly have
	\begin{equation}
		\label{1224a}
		||\mathbf{e}_{\mathbf{u}^{\mathit{f}}}||_{1,h}^2\leq C(||\mathcal{A}\mathbf{e}_{\underline{\bm{\sigma}}^{\mathit{f}}}||_{0,\Omega}^2+||\underline{\bm{\sigma}}^{\mathit{f}}-\mathbf{\Pi}_h\underline{\bm{\sigma}}^{\mathit{f}}||_{0,\Omega}^2)\leq C(||\mathcal{A}^{\frac{1}{2}}\mathbf{e}_{\underline{\bm{\sigma}}^{\mathit{f}}}||_{0,\Omega}^2+||\underline{\bm{\sigma}}^{\mathit{f}}-\mathbf{\Pi}_h\underline{\bm{\sigma}}^{\mathit{f}}||_{0,\Omega}^2),
	\end{equation}
	where $C$ is independent of $\lambda_S$ and $h$. By \cite[Proposition 5.1]{JJ2012}, there exists $\underline{\bm{\tau}}_1\in\underline{\bm{\mathit{V}}}^h$ such that $(\underline{\bm{\tau}}_1,\mathbf{e}_{\underline{\bm{\rho}}^{\mathit{f}}})_{\Omega}=(\mathbf{e}_{\underline{\bm{\rho}}^{\mathit{f}}},\mathbf{e}_{\underline{\bm{\rho}}^{\mathit{f}}})_{\Omega}$ and $||\underline{\bm{\tau}}_1||_{0,\Omega}\leq C||\mathbf{e}_{\underline{\bm{\rho}}^{\mathit{f}}}||_{0,\Omega}$, where $C$ only depends on the shape regularity of the grid.
	Substituting the test function $\underline{\bm{\tau}}_1$ into (\ref{1221d_a}), we obtain that
	$||\mathbf{e}_{\underline{\bm{\rho}}^{\mathit{f}}}||_{0,\Omega}^2\leq C(||\underline{\bm{\sigma}}^{\mathit{f}}-\mathbf{\Pi}_h\underline{\bm{\sigma}}^{\mathit{f}}||_{0,\Omega}+||\mathbf{e}_{\mathbf{u}^{\mathit{f}}}||_{1,h}+||\mathcal{A}^{\frac{1}{2}}\mathbf{e}_{\underline{\bm{\sigma}}^{\mathit{f}}}||_{0,\Omega})\cdot||\mathbf{e}_{\underline{\bm{\rho}}^{\mathit{f}}}||_{0,\Omega}.$ Then, combining (\ref{1224a}), we obtain
	\begin{equation}
		\label{1225b}
		||\mathbf{e}_{\underline{\bm{\rho}}^{\mathit{f}}}||_{0,\Omega}\leq C(||\underline{\bm{\sigma}}^{\mathit{f}}-\mathbf{\Pi}_h\underline{\bm{\sigma}}^{\mathit{f}}||_{0,\Omega}+||\mathcal{A}^{\frac{1}{2}}\mathbf{e}_{\underline{\bm{\sigma}}^{\mathit{f}}}||_{0,\Omega})	
	\end{equation}
	By using (\ref{1221e}), (\ref{1225b}), and an arithmetic-geometric mean inequality, we have
	\begin{equation}
		\label{1225d}
		||\mathcal{A}^{\frac{1}{2}}\mathbf{e}_{\underline{\bm{\sigma}}^{\mathit{f}}}||_{0,\Omega}\leq C||\underline{\bm{\sigma}}^{\mathit{f}}-\mathbf{\Pi}_h\underline{\bm{\sigma}}^{\mathit{f}}||_{0,\Omega}\leq Ch^s ||\bm{\mathit{f}}||_{0,\Omega},
	\end{equation}
	where $C$ is independent of $\lambda_S$ and $h$, and the estimates (\ref{ii}), (\ref{jj}), Assumption \ref{0128b} have been used. Combining (\ref{1224a}) and (\ref{1225d}), we obtain
	\begin{equation}
		\label{1225e}
		||\mathbf{e}_{\mathbf{u}^{\mathit{f}}}||_{1,h} \leq C||\underline{\bm{\sigma}}^{\mathit{f}}-\mathbf{\Pi}_h\underline{\bm{\sigma}}^{\mathit{f}}||_{0,\Omega}\leq Ch^s ||\bm{\mathit{f}}||_{0,\Omega},
	\end{equation}
	where $C$ is independent of $\lambda_S$ and $h$. Also, by (\ref{1225b}), (\ref{1225d}), the estimates (\ref{ii})-(\ref{kk}) and the triangle inequality, we have
	\begin{equation}
		\label{1225f}
		||\mathcal{A}^{\frac{1}{2}}(\underline{\bm{\sigma}}^{\mathit{f}}-\underline{\bm{\sigma}}^{\mathit{f}}_h)||_{0,\Omega}\leq Ch^s ||\bm{\mathit{f}}||_{0,\Omega},\hspace{1em}||\underline{\bm{\rho}}^{\mathit{f}}-\underline{\bm{\rho}}^{\mathit{f}}_h||_{0,\Omega}\leq Ch^s ||\bm{\mathit{f}}||_{0,\Omega}, 
	\end{equation}
	where $C$ is independent of $\lambda_S$ and $h$.
	Now combining (\ref{1225e}) and (\ref{1225f}), we easily find that (\ref{1221l}) is well satisfied.
	Besides, 
	$||\bm{\mathit{T}}\bm{\mathit{f}}-\bm{\mathit{T}}_h\bm{\mathit{f}}||_{0,\Omega}\leq||\mathbf{u}^{\mathit{f}}-\bm{\mathit{P}}\mathbf{u}^{\mathit{f}}||_{0,\Omega}+||\mathbf{e}_{\mathbf{u}^{\mathit{f}}}||_{1,h}\leq Ch^{s+1}||\mathbf{u}^{\mathit{f}}||_{s+1,\Omega}+Ch^s||\bm{\mathit{f}}||_{0,\Omega}\leq Ch^s||\bm{\mathit{f}}||_{0,\Omega},$
	where the triangle inequality, (\ref{ll}), Assumption \ref{0128b} have been used and  $C$ is independent of $\lambda_S$ and $h$.
\end{proof}
\subsection{The spectrum of the discrete solution operator}
\label{sec: The spectrum of the discrete solution operator}
\begin{theorem}
	\label{spectrum of T_h}
	The spectrum of $\bm{\mathit{T}}_h: \bm{\mathit{L}}^2(\Omega)\to \bm{\mathit{W}}^h$ consists of $0$ and 
	$M:=dim\bm{\mathit{W}}^h$ eigenvalues, repeated accordingly to their respective multiplicities. 
	The spectrum  
	decomposes as follows: $sp(\bm{\mathit{T}}_h)=\{0\}\cup\{\mu_{hk}\}_{k=1}^{M}$. Moreover,
	
	{\rm(i)} $\{\mu_{hk}\}_{k=1}^{M}$ is $M$ nonzero eigenvalues of $\bm{\mathit{T}}_h$ and there exist $M$ corresponding eigenvectors $\{\bm{\mathit{p}}_n\}_{n=1}^{M} \in\bm{\mathit{W}}^h$, that satisfy 
	$\mu_{h1}=||\bm{\mathit{T}}_h||,\hspace{0.5em} \mu_{h1}\geq\mu_{h2}\geq\cdot\cdot\cdot\geq\mu_{hM}>0,$
	$\bm{\mathit{T}}_h\bm{\mathit{p}}_n=\mu_{hn}\bm{\mathit{p}}_n,\hspace{0.5em}  1\leq n\leq M, \hspace{0.5em}  and \hspace{0.5em} (\bm{\mathit{p}}_{\mathit{k}},\bm{\mathit{p}}_{\mathit{l}})_{\Omega}=\delta_{\mathit{k}\mathit{l}}\hspace{0.5em}  for\hspace{0.5em}  all\hspace{0.5em}  1\leq\mathit{k},\mathit{l}\leq M.$
	And the ascent of each eigenvalue is 1.
	{\rm(ii)} $\mu_h=0$ is not an eigenvalue of $\bm{\mathit{T}}_h$.
\end{theorem}
\begin{proof}
	It's enough to use \cite[Theorem 4.11-2]{PG2013} to obtain (i). For (ii), 0 is in the spectrum of $\bm{\mathit{T}}_h$. If $0\notin sp(\bm{\mathit{T}}_h)$, then $\bm{\mathit{T}}_h: \bm{\mathit{L}}^2(\Omega)\to \bm{\mathit{W}}^h\subset\bm{\mathit{L}}^2(\Omega)$ is injective. For any $\mathbf{y}\in\bm{\mathit{W}}^h$, by (i), we can express $\mathbf{y}$ as $\mathbf{y}=\sum_{i=1}^{M}y_i\bm{\mathit{p}}_i,$
	$y_i\in\mathbb{R}$ for all $1\leq i\leq M$. Let $\mathbf{x}:=\sum_{i=1}^{M}\frac{y_i}{\mu_{hi}}\bm{\mathit{p}}_i$, then $\bm{\mathit{T}}_h\mathbf{x}=\bm{\mathit{T}}_h(\sum_{i=1}^{M}\frac{y_i}{\mu_{hi}}\bm{\mathit{p}}_i)=\sum_{i=1}^{M}y_i(\frac{1}{\mu_{hi}}\bm{\mathit{T}}_h\bm{\mathit{p}}_i)=\sum_{i=1}^{M}y_i\bm{\mathit{p}}_i=\mathbf{y}$. So $\bm{\mathit{T}}_h$ is surjective. That's, $\bm{\mathit{T}}_h$ is a bijective linear operator. This is a contradiction because $\bm{\mathit{L}}^2(\Omega)$ and $\bm{\mathit{W}}^h$ is not isomorphic. However, $\mu_h=0$ is not an eigenvalue. Suppose $\exists\mu_h=0$ and $\mathbf{0}\neq\bm{\mathit{f}}\in\bm{\mathit{W}}^h$ such that $\mathbf{u}^{\mathit{f}}_h=\bm{\mathit{T}}_h\bm{\mathit{f}}=\mu_h\bm{\mathit{f}}=\mathbf{0}$. By the relation between (\ref{aa}) and (\ref{mm}), we have $(\underline{\bm{\sigma}}^{\mathit{f}}_{h},\underline{\bm{\rho}}^{\mathit{f}}_{h})=\mu_h(\underline{\bm{\sigma}}_h,\underline{\bm{\rho}}_h)=(\mathbf{0},\mathbf{0}).$ Thus in terms of (\ref{mm_b}), we have $(\bm{\mathit{f}},\bm{\omega})_{\Omega}=0$, $\forall\bm{\omega}\in\bm{\mathit{W}}^h$. That is, $\bm{\mathit{f}}=\mathbf{0}$, which is a contradiction. 	
\end{proof}
\begin{remark}
	\label{apply to other elements}
	We point out that the analyses in section \ref{sec:The discrete problem}(i.e. Theorems \ref{1220gg}, \ref{1221a} and \ref{spectrum of T_h}) can cover some other elements, for instance, the AFW element\cite{AFW2007}, the elasticity element using the matrix bubble studied by Cockburn, Gopalakrishnan, Guzm$\rm \acute{a}$n\cite{CJJ2010} and a second elasticity element using the matrix bubble\cite[Section 2]{JJ2012} . In addition, we emphasize that because we use the element given by \cite[Section 6]{JJ2012}, which removes the bubbles, our proof is much more difficult than before.
\end{remark}
\section{Spectral approximation}
\label{sec: Spectral approximation}
In this section, by virtue of the results obtained in the above sections, we will conclude that
the numerical scheme provides a correct spectral approximation. We will approximate the $L^2$-orthogonal projection of the
eigenspace of exact displacement under discrete $H^1$-norm in Theorem \ref{0308b}. Asymptotic error estimates for the eigenvalues will be also established. Then the approximation for the eigenspace of the stress tensor will be given in Theorem \ref{0308c}.

We recall some commonly used notations as follows(see \cite{Osborn1975}). For any complex number $z\in\mathbb{C}\setminus sp(\bm{\mathit{T}})$, $R_z(\bm{\mathit{T}})=(z\bm{\mathit{I}}-\bm{\mathit{T}})^{-1}$ is the resolvent operator. Let $\mu$ be an eigenvalue with finite multiplicity $m$ of $\bm{\mathit{T}}$. The spectral projection associated with $\mu$ and $\bm{\mathit{T}}$ is defined by
$\bm{\mathit{E}}:=\frac{1}{2\pi i}\int_{\gamma}R_z(\bm{\mathit{T}})dz,$
where $\gamma$ is defined in (\ref{arc_1}) below. $\bm{\mathit{E}}$ is a projection onto the space of generalized eigenvectors associated with $\mu=\frac{1}{\lambda}$ and $\bm{\mathit{T}}$. We have known that $\bm{\mathit{T}}_h\to\bm{\mathit{T}}$ in $L^2$-norm, hence for $h$ sufficiently small, $\gamma\subset\mathbb{C}\setminus sp(\bm{\mathit{T}}_h)$ and the spectral projection,
$\bm{\mathit{E}}_h:=\frac{1}{2\pi i}\int_{\gamma}R_z(\bm{\mathit{T}}_h)dz$ exists; $\bm{\mathit{E}}_h$ is the spectral projection associated with $\bm{\mathit{T}}_h$ and the eigenvalues of $\bm{\mathit{T}}_h$ which lie in $\gamma$, and is a projection onto the direct sum of the spaces of generalized eigenvectors corresponding to these eigenvalues. There are $m$ eigenvalues of $\bm{\mathit{T}}_h$ in $\gamma$, denoted as $\mu_{h1},\mu_{h2},\cdot\cdot\cdot,\mu_{hm}$ (repeated according to their respective multiplicities. We also define $\frac{1}{\lambda_{hi}}=\mu_{hi},i=1,2,\cdot\cdot\cdot,m$). $R(\bm{\mathit{E}}), R(\bm{\mathit{E}}_h)$ are generalized eigenspaces, where $R$ denotes the range. Let $\bm{\mathit{W}}(h):=\bm{\mathit{W}}^h+\bm{\mathit{H}}^1_{0,\Gamma_0}(\Omega)$. 
Given two closed subspaces $\bm{\mathit{M}}$ and  $\bm{\mathit{N}}$ of $\bm{\mathit{W}}(h)$, for $\mathbf{x}\in\bm{\mathit{W}}(h)$, we define $\delta(\mathbf{x},\bm{\mathit{M}}):=\inf\limits_{\mathbf{y}\in\bm{\mathit{M}}}||\mathbf{x}-\mathbf{y}||_{1,h}$, $\delta(\bm{\mathit{M}}, \bm{\mathit{N}})=\sup\limits_{\mathbf{y}\in\bm{\mathit{M}},||\mathbf{y}||_{1,h}=1}\delta(\mathbf{y},\bm{\mathit{N}})$ and 
$\hat{\delta}(\bm{\mathit{M}}, \bm{\mathit{N}})=\max\{\delta(\bm{\mathit{M}}, \bm{\mathit{N}}),\delta(\bm{\mathit{N}}, \bm{\mathit{M}})\}$. $\hat{\delta}(\bm{\mathit{M}}, \bm{\mathit{N}})$ is called the gap between $\bm{\mathit{M}}$ and $\bm{\mathit{N}}$.

For convenience, we divide it into three cases in some analyses below(i.e. in Lemmas \ref{0220c}, \ref{0126c}, \ref{0127b} and \ref{0127c}; Theorems \ref{0308b} and \ref{0308c}):
\begin{subequations}
	\label{0308a}
	\begin{align}
		&{\rm \textbf{Case 1}: There\hspace{0.5em} exists \hspace{0.5em} a\hspace{0.5em} constant \hspace{0.5em} M>0 \hspace{0.5em}such\hspace{0.5em} that \hspace{0.5em} \lambda_S\leq M.}\label{Case1}\\
		&{\rm \textbf{Case 2}: For \hspace{0.5em} any\hspace{0.5em} constant \hspace{0.5em} M>0, \lambda_S\geq M(i.e. \hspace{0.5em} \lambda_S\to\infty)}.\label{Case2}\\
		&{\rm \textbf{Case 3}: \lambda_S=\infty}\label{Case3}.
	\end{align}	
\end{subequations}

To recall that $\bm{\mathit{T}}$ actually depends on $\lambda_S$, we will denote it by $\bm{\mathit{T}}_{\lambda_S}$ if we need to strictly tell the difference between $\bm{\mathit{T}}_{\lambda_S}$ and $\bm{\mathit{T}}_{\infty}$(i.e.$\lambda_S=\infty$) below. Otherwise, we will use $\bm{\mathit{T}}$ broadly. We also use the same notation convention for $\bm{\mathit{T}}_h,\bm{\mathit{E}},\bm{\mathit{E}}_h,\lambda,\mu,\lambda_h$ and $\mu_h$.
We give the definition of $\gamma$ and $\gamma_{\infty}$:
\begin{subequations}
	\label{0314a}
	\begin{align}
		&{\rm \gamma:boundary \hspace{0.2em} of\hspace{0.2em} D_{\mu}:=\{z\in\mathbb{C}:|z-\mu|\leq d_{\mu},
			\hspace{0.2em} d_{\mu}=\frac{1}{2}dist(\mu,sp(\bm{\mathit{T}})\backslash\{\mu\})\}},\label{arc_1}\\
		&{\rm \gamma_{\infty}:boundary \hspace{0.2em} of \hspace{0.2em} D_{\infty}:=\{z\in\mathbb{C}:|z-\mu_{\infty}|\leq d_{\infty}, d_{\infty}=\frac{1}{2}dist(\mu_{\infty},sp(\bm{\mathit{T}}_{\infty})
			\backslash\{\mu_{\infty}\})\}}.\label{arc_2}
	\end{align}	
\end{subequations}
Here $\mu_{\infty}$ is an eigenvalue of $\bm{\mathit{T}}_{\infty}$, which is defined detailedly below. 
From (\ref{arc_1}), we have $D_{\mu}\cap sp(\bm{\mathit{T}})=\{\mu\}$, and for all $z\in\gamma$, $dist(z,sp(\bm{\mathit{T}}))=dist(\gamma,sp(\bm{\mathit{T}}))=d_{\mu}$.
\subsection{Spectral approximation}
\label{sec: Spectral approximations_begining}
Before spectral analysis, we will give a spectral characterization for incompressible elasticity and the convergence analysis for solution operators.
In the limit case $\lambda_S=\infty$,  the bilinear forms $A$ and $B$ defined as before change 
since the term where $\lambda_S$ appears vanishes. Hence the limit eigenvalue problem reads as follows: Find $\lambda_{\infty}\in\mathbb{R}$ and $\mathbf{u}_{\infty}\in\bm{\mathit{L}}^2(\Omega)$ with corresponding $(\underline{\bm{\sigma}}_{\infty},\underline{\bm{\rho}}_{\infty})\in\underline{\bm{\mathit{Y}}}\times\underline{\bm{\mathit{Q}}}$ such that
\begin{equation}
	\label{ya}
	A_{\infty}((\underline{\bm{\sigma}}_{\infty},\mathbf{u}_{\infty},\underline{\bm{\rho}}_{\infty}),(\underline{\bm{\upsilon}},\bm{\omega},\underline{\bm{\eta}}))=\lambda_{\infty} B_{\infty}(\mathbf{u}_{\infty},\bm{\omega}),\quad \forall(\underline{\bm{\upsilon}},\bm{\omega},\underline{\bm{\eta}})\in\underline{\bm{\mathit{Y}}}\times\bm{\mathit{L}}^2(\Omega)\times\underline{\bm{\mathit{Q}}}
\end{equation}
with
$A_{\infty}((\underline{\bm{\sigma}},\mathbf{u},\underline{\bm{\rho}}),(\underline{\bm{\upsilon}},\bm{\omega},\underline{\bm{\eta}}))=\frac{1}{2\mu_S}(\underline{\bm{\sigma}}^D,\underline{\bm{\upsilon}}^D)_{\Omega}+(\mathbf{u},\textbf{div}\underline{\bm{\upsilon}})_{\Omega}+(\underline{\bm{\rho}},\underline{\bm{\upsilon}})_{\Omega}+(\rho_S^{-1}\textbf{div}\underline{\bm{\sigma}},\bm{\omega})_{\Omega}$ $+(\underline{\bm{\sigma}},\underline{\bm{\eta}})_{\Omega},$
and
$B_{\infty}(\mathbf{u},\bm{\omega})=-(\mathbf{u},\bm{\omega})_{\Omega},$
for $(\underline{\bm{\sigma}},\mathbf{u},\underline{\bm{\rho}}),(\underline{\bm{\upsilon}},\bm{\omega},\underline{\bm{\eta}})\in\underline{\bm{\mathit{Y}}}\times\bm{\mathit{L}}^2(\Omega)\times\underline{\bm{\mathit{Q}}}$.

The solution operator $\bm{\mathit{T}}_{\infty}:\bm{\mathit{L}}^2(\Omega)\to\bm{\mathit{L}}^2(\Omega)$, defined for any $\bm{\mathit{f}}\in\bm{\mathit{L}}^2(\Omega)$ by
\begin{equation}
	\label{yd}
	A_{\infty}((\underline{\bm{\sigma}}_{\infty}^{\mathit{f}},\mathbf{u}_{\infty}^{\mathit{f}},\underline{\bm{\rho}}_{\infty}^{\mathit{f}}),(\underline{\bm{\upsilon}},\bm{\omega},\underline{\bm{\eta}}))= B_{\infty}(\bm{\mathit{f}},\bm{\omega}),\quad \forall(\underline{\bm{\upsilon}},\bm{\omega},\underline{\bm{\eta}})\in\underline{\bm{\mathit{Y}}}\times\bm{\mathit{L}}^2(\Omega)\times\underline{\bm{\mathit{Q}}}
\end{equation}
where $\mathbf{u}_{\infty}^{\mathit{f}}=\bm{\mathit{T}}_{\infty}\bm{\mathit{f}}$. It is easy to check that  $(\lambda_{\infty},\mathbf{u}_{\infty})\in \mathbb{R}\times\bm{\mathit{L}}^2(\Omega)$ solves ($\ref{ya}$) if and only if $(\mu_{\infty},\mathbf{u}_{\infty})$, with $\mu_{\infty}=\lambda_{\infty}^{-1}$ is an eigenpair of $\bm{\mathit{T}}_{\infty}$, i.e. if and only if $\bm{\mathit{T}}_{\infty}\mathbf{u}_{\infty}=\lambda_{\infty}^{-1}\mathbf{u}_{\infty} (\lambda_{\infty}>0).$
In this case, we have $(\underline{\bm{\sigma}}^{\mathit{f}}_{\infty}, \underline{\bm{\rho}}^{\mathit{f}}_{\infty})=\lambda_{\infty}^{-1}(\underline{\bm{\sigma}}_{\infty}, \underline{\bm{\rho}}_{\infty})$. In fact, using the same analysis and regularity assumption as before, we can obtain that $\bm{\mathit{T}}_{\infty}:\bm{\mathit{L}}^2(\Omega)\to\bm{\mathit{H}}^1(\Omega)$ is compact and has similar characterizations of the spectrum to $\bm{\mathit{T}}_{\lambda_S}$. 

Let $m_{\infty}$ be the multiplicity of $\mu_{\infty}$. From (\ref{arc_2}), we have $D_{\infty}\cap sp(\bm{\mathit{T}}_{\infty})=\{\mu_{\infty}\}$, and for all $z\in\gamma_{\infty}$, $dist(z,sp(\bm{\mathit{T}}_{\infty}))=dist(\gamma_{\infty},sp(\bm{\mathit{T}}_{\infty}))=d_{\infty}$. The spectral projection associated with $\mu_{\infty}$ and $\bm{\mathit{T}}_{\infty}$ is defined by
$\bm{\mathit{E}}_{\infty}:=\frac{1}{2\pi i}\int_{\gamma_{\infty}}R_z(\bm{\mathit{T}}_{\infty})dz.$ In fact, by Lemma \ref{0220a} below, we know that for $\lambda_S\to\infty$, $\gamma_{\infty}\subset(\mathbb{C}\backslash sp(\bm{\mathit{T}}_{\lambda_S}))$ and the spectral projection, $\bm{\mathit{E}}_{\lambda_S}:=\frac{1}{2\pi i}\int_{\gamma_{\infty}}R_z(\bm{\mathit{T}}_{\lambda_S})dz$ exists; $\bm{\mathit{E}}_{\lambda_S}$ is the spectral projection associated with $\bm{\mathit{T}}_{\lambda_S}$ and the eigenvalues of $\bm{\mathit{T}}_{\lambda_S}$ which lie in $\gamma_{\infty}$, and is a projection onto the direct sum of the spaces of generalized eigenvectors corresponding to these eigenvalues. There are $m_{\infty}$ eigenvalues of $\bm{\mathit{T}}_{\lambda_S}$ in $\gamma_{\infty}$, denoted as $\mu_{\lambda_S}^1,\mu_{\lambda_S}^2,\cdot\cdot\cdot,\mu_{\lambda_S}^{m_{\infty}}$ (repeated according to their respective multiplicities. We also define $1/\lambda_{\lambda_S}^i=\mu_{\lambda_S}^i,i=1,2,\cdot\cdot\cdot,m_{\infty}$).
\begin{lemma}
	\label{0220a}	
	There exists a constant $C>0$ independent of $\lambda_S$ such that
	$||(\bm{\mathit{T}}_{\lambda_S}-\bm{\mathit{T}}_{\infty})\bm{\mathit{f}}||_{1,\Omega}\leq\frac{C}{\lambda_S}||\bm{\mathit{f}}||_{0,\Omega},$ for all $\bm{\mathit{f}}\in\bm{\mathit{L}}^2(\Omega).$	
\end{lemma}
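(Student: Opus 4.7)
The plan is to write down the error system satisfied by the two sources, test with the stress error, and then recover control of the displacement via Korn's inequality. Let $\mathbf{u}^{\mathit{f}},\underline{\bm{\sigma}}^{\mathit{f}},\underline{\bm{\rho}}^{\mathit{f}}$ and $\mathbf{u}_\infty^{\mathit{f}},\underline{\bm{\sigma}}_\infty^{\mathit{f}},\underline{\bm{\rho}}_\infty^{\mathit{f}}$ denote the solutions of (\ref{c}) and (\ref{yd}) respectively, and set $\underline{\bm{\xi}}:=\underline{\bm{\sigma}}^{\mathit{f}}-\underline{\bm{\sigma}}_\infty^{\mathit{f}}$, $\mathbf{v}:=\mathbf{u}^{\mathit{f}}-\mathbf{u}_\infty^{\mathit{f}}$. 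From the strong forms noted after (\ref{dual_mixed}), both $\underline{\bm{\sigma}}^{\mathit{f}}$ and $\underline{\bm{\sigma}}_\infty^{\mathit{f}}$ are symmetric, share the same divergence ($-\rho_S\bm{\mathit{f}}$) and the same Neumann trace on $\Gamma_1$, so $\underline{\bm{\xi}}\in\bm{\mathit{K}}$ in the sense of Lemma~\ref{lemma2.2}. Similarly $\mathbf{v}\in\bm{\mathit{H}}^1_{0,\Gamma_0}(\Omega)$ and satisfies
\begin{equation*}
\mathcal{A}\underline{\bm{\xi}}=\underline{\bm{\epsilon}}(\mathbf{v})-(\mathcal{A}-\mathcal{A}_\infty)\underline{\bm{\sigma}}_\infty^{\mathit{f}}.
\end{equation*}

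The key observation is that the perturbation operator is small: by (\ref{1220a}),
\begin{equation*}
(\mathcal{A}-\mathcal{A}_\infty)\underline{\bm{\tau}}=\frac{1}{n(n\lambda_S+2\mu_S)}(tr\underline{\bm{\tau}})\bm{\mathit{I}},
\end{equation*}
whose $\bm{\mathit{L}}^2$-operator norm is bounded by $C/\lambda_S$ for a constant depending only on $n$ and $\mu_S$. Taking the $\bm{\mathit{L}}^2$-inner product of the above displayed identity with $\underline{\bm{\xi}}$ and integrating by parts the symmetric-tensor term $(\underline{\bm{\epsilon}}(\mathbf{v}),\underline{\bm{\xi}})_\Omega=(\nabla\mathbf{v},\underline{\bm{\xi}})_\Omega=-(\mathbf{v},\textbf{div}\underline{\bm{\xi}})_\Omega+\text{boundary terms}$, all of which vanish because $\textbf{div}\underline{\bm{\xi}}=\mathbf{0}$, $\mathbf{v}|_{\Gamma_0}=\mathbf{0}$ and $\underline{\bm{\xi}}\mathbf{n}|_{\Gamma_1}=\mathbf{0}$, one obtains
\begin{equation*}
(\mathcal{A}\underline{\bm{\xi}},\underline{\bm{\xi}})_\Omega=-((\mathcal{A}-\mathcal{A}_\infty)\underline{\bm{\sigma}}_\infty^{\mathit{f}},\underline{\bm{\xi}})_\Omega.
\end{equation*}

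Since $\underline{\bm{\xi}}\in\bm{\mathit{K}}$, Lemma~\ref{lemma2.2} gives $(\mathcal{A}\underline{\bm{\xi}},\underline{\bm{\xi}})_\Omega\geq C\|\underline{\bm{\xi}}\|_{\underline{\bm{\mathit{H}}}(\textbf{div};\Omega)}^2$ with $C$ independent of $\lambda_S$, while Cauchy--Schwarz together with the $\lambda_S$-independent bound $\|\underline{\bm{\sigma}}_\infty^{\mathit{f}}\|_{0,\Omega}\leq C\|\bm{\mathit{f}}\|_{0,\Omega}$ (the analogue for $\bm{\mathit{T}}_\infty$ of the $\|\underline{\bm{\sigma}}^{\mathit{f}}\|\leq C\|\bm{\mathit{f}}\|$ estimate proved in Section~3.1) yields $\|\underline{\bm{\xi}}\|_{0,\Omega}\leq (C/\lambda_S)\|\bm{\mathit{f}}\|_{0,\Omega}$. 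Finally, Korn's inequality on $\bm{\mathit{H}}^1_{0,\Gamma_0}(\Omega)$ gives
\begin{equation*}
\|\mathbf{v}\|_{1,\Omega}\leq C\|\underline{\bm{\epsilon}}(\mathbf{v})\|_{0,\Omega}\leq C\bigl(\|\mathcal{A}\underline{\bm{\xi}}\|_{0,\Omega}+\|(\mathcal{A}-\mathcal{A}_\infty)\underline{\bm{\sigma}}_\infty^{\mathit{f}}\|_{0,\Omega}\bigr),
\end{equation*}
and since $\|\mathcal{A}\underline{\bm{\tau}}\|_{0,\Omega}\leq C_{\mu_S}\|\underline{\bm{\tau}}\|_{0,\Omega}$ uniformly in $\lambda_S$, the desired bound $\|(\bm{\mathit{T}}_{\lambda_S}-\bm{\mathit{T}}_\infty)\bm{\mathit{f}}\|_{1,\Omega}=\|\mathbf{v}\|_{1,\Omega}\leq (C/\lambda_S)\|\bm{\mathit{f}}\|_{0,\Omega}$ follows.

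The only subtle point is justifying that the $\lambda_S$-coercivity constant of $\mathcal{A}$ on the kernel $\bm{\mathit{K}}$ is genuinely independent of $\lambda_S$; this is exactly the content of Lemma~\ref{lemma2.2}, so no new argument is needed. The rest is a routine Galerkin-style perturbation calculation.
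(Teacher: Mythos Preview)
Your argument is correct and reaches the same conclusion as the paper, but by a somewhat different route. The paper writes the error equation at the variational level for the bilinear form $A_\infty$, then invokes the \emph{global} inf--sup stability of $A_\infty$ to control $\|\underline{\bm{\sigma}}^{\mathit f}_{\lambda_S}-\underline{\bm{\sigma}}^{\mathit f}_\infty\|_{0,\Omega}$ directly by the perturbation term $\tfrac{1}{n(n\lambda_S+2\mu_S)}(tr\underline{\bm{\sigma}}^{\mathit f}_{\lambda_S},tr\underline{\bm{\upsilon}})_\Omega$ (so the right--hand side involves $\underline{\bm{\sigma}}^{\mathit f}_{\lambda_S}$ rather than $\underline{\bm{\sigma}}^{\mathit f}_\infty$), and then applies Korn exactly as you do. Your version instead exploits the structural fact that $\underline{\bm{\xi}}$ is divergence--free and symmetric, kills the term $(\underline{\bm{\epsilon}}(\mathbf{v}),\underline{\bm{\xi}})_\Omega$ by integration by parts, and appeals only to the \emph{kernel coercivity} of $\mathcal{A}$. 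This is a bit more elementary: it bypasses the full inf--sup machinery in favour of a single energy estimate.

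One small point to tidy up: Lemma~\ref{lemma2.2} is stated for the discrete kernel $\bm{\mathit K}\subset\underline{\bm{\mathit V}}^h$, while your $\underline{\bm{\xi}}$ lives in the continuous space $\underline{\bm{\mathit Y}}$. The inequality you need does hold at the continuous level (the proof of Lemma~\ref{lemma2.2} uses only (\ref{xb}) and (\ref{xc}), both valid for any $\underline{\bm{\tau}}\in\underline{\bm{\mathit Y}}$ with $\textbf{div}\,\underline{\bm{\tau}}=\mathbf{0}$ and $\underline{\bm{\tau}}=\underline{\bm{\tau}}^t$), but you should say so explicitly rather than citing the lemma as stated.
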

\begin{proof}
	Let $\bm{\mathit{f}}\in\bm{\mathit{L}}^2(\Omega)$ and let $\mathbf{u}_{\lambda_S}^{\mathit{f}}=\bm{\mathit{T}}_{\lambda_S}\bm{\mathit{f}}$ (the corresponding stress and rotation are denoted as $\underline{\bm{\sigma}}_{\lambda_S}^{\mathit{f}}$ and $\underline{\bm{\rho}}_{\lambda_S}^{\mathit{f}}$). Then in terms of (\ref{c}) and (\ref{1220a}), we have
	$
	A_{\infty}((\underline{\bm{\sigma}}_{\lambda_S}^{\mathit{f}},\mathbf{u}_{\lambda_S}^{\mathit{f}},\underline{\bm{\rho}}_{\lambda_S}^{\mathit{f}}),(\underline{\bm{\upsilon}},\bm{\omega},\underline{\bm{\eta}}))= B_{\infty}(\bm{\mathit{f}},\bm{\omega})-\frac{(tr(\underline{\bm{\sigma}}_{\lambda_S}^{\mathit{f}}),tr(\underline{\bm{\upsilon}}))_{\Omega}}{n(n\lambda_S+2\mu_S)},
	$
	for all $(\underline{\bm{\upsilon}},\bm{\omega},\underline{\bm{\eta}})\in\underline{\bm{\mathit{Y}}}\times\bm{\mathit{L}}^2(\Omega)\times\underline{\bm{\mathit{Q}}}$. Combining (\ref{yd}), we have
	$A_{\infty}((\underline{\bm{\sigma}}_{\lambda_S}^{\mathit{f}}-\underline{\bm{\sigma}}_{\infty}^{\mathit{f}},\mathbf{u}_{\lambda_S}^{\mathit{f}}-\mathbf{u}_{\infty}^{\mathit{f}},\underline{\bm{\rho}}_{\lambda_S}^{\mathit{f}}-\underline{\bm{\rho}}_{\infty}^{\mathit{f}}),(\underline{\bm{\upsilon}},\bm{\omega},\underline{\bm{\eta}}))$ $=-\frac{(tr(\underline{\bm{\sigma}}_{\lambda_S}^{\mathit{f}}),tr(\underline{\bm{\upsilon}}))_{\Omega}}{n(n\lambda_S+2\mu_S)}$.
	Then using the conventional inf-sup condition, we obtain
	\begin{equation}
		\label{yl}
		\begin{aligned}		
			&||\underline{\bm{\sigma}}_{\lambda_S}^{\mathit{f}}-\underline{\bm{\sigma}}_{\infty}^{\mathit{f}}||_{0,\Omega}\leq C\sup\limits_{\substack{(\underline{\bm{\upsilon}},\bm{\omega},\underline{\bm{\eta}})\in\\\underline{\bm{\mathit{Y}}}\times\bm{\mathit{L}}^2(\Omega)\times\underline{\bm{\mathit{Q}}}}}\frac{A_{\infty}((\underline{\bm{\sigma}}_{\lambda_S}^{\mathit{f}}-\underline{\bm{\sigma}}_{\infty}^{\mathit{f}},\mathbf{u}_{\lambda_S}^{\mathit{f}}-\mathbf{u}_{\infty}^{\mathit{f}},\underline{\bm{\rho}}_{\lambda_S}^{\mathit{f}}-\underline{\bm{\rho}}_{\infty}^{\mathit{f}}),(\underline{\bm{\upsilon}},\bm{\omega},\underline{\bm{\eta}}))}{||(\underline{\bm{\upsilon}},\bm{\omega},\underline{\bm{\eta}})||}\\
			&\leq C\sup\limits_{\substack{(\underline{\bm{\upsilon}},\bm{\omega},\underline{\bm{\eta}})\in\\\underline{\bm{\mathit{Y}}}\times\bm{\mathit{L}}^2(\Omega)\times\underline{\bm{\mathit{Q}}}}}\frac{C\cdot\frac{1}{n(n\lambda_S+2\mu_S)}\cdot||\underline{\bm{\sigma}}_{\lambda_S}^{\mathit{f}}||_{0,\Omega}\cdot||\underline{\bm{\upsilon}}||_{0,\Omega}}{||(\underline{\bm{\upsilon}},\bm{\omega},\underline{\bm{\eta}})||}\leq\frac{C}{\lambda_S}||\underline{\bm{\sigma}}_{\lambda_S}^{\mathit{f}}||_{0,\Omega}\leq\frac{C}{\lambda_S}||\bm{\mathit{f}}||_{0,\Omega}
		\end{aligned}	
	\end{equation}
	By korn's inequality, (\ref{yl}) and (\ref{1220a}), we have
	\begin{equation*}
		\label{0220b}
		\begin{aligned}		
			&||\mathbf{u}_{\lambda_S}^{\mathit{f}}-\mathbf{u}_{\infty}^{\mathit{f}}||_{1,\Omega}\leq C||\underline{\bm{\epsilon}}(\mathbf{u}_{\lambda_S}^{\mathit{f}}-\mathbf{u}_{\infty}^{\mathit{f}})||_{0,\Omega}=C||\frac{1}{2\mu_S}(\underline{\bm{\sigma}}_{\lambda_S}^{\mathit{f}})^D+\frac{1}{n(n\lambda_S+2\mu_S)}tr(\underline{\bm{\sigma}}_{\lambda_S}^{\mathit{f}})\bm{\mathit{I}}\\
			&\quad\quad-\frac{1}{2\mu_S}(\underline{\bm{\sigma}}_{\infty}^{\mathit{f}})^D||_{0,\Omega}\leq C||\underline{\bm{\sigma}}_{\lambda_S}^{\mathit{f}}-\underline{\bm{\sigma}}_{\infty}^{\mathit{f}}||_{0,\Omega}+\frac{C}{\lambda_S}||\underline{\bm{\sigma}}_{\lambda_S}^{\mathit{f}}||_{0,\Omega}\leq\frac{C}{\lambda_S}||\bm{\mathit{f}}||_{0,\Omega}
		\end{aligned}	
	\end{equation*}
	That's, $||(\bm{\mathit{T}}_{\lambda_S}-\bm{\mathit{T}}_{\infty})\bm{\mathit{f}}||_{1,\Omega}\leq\frac{C}{\lambda_S}||\bm{\mathit{f}}||_{0,\Omega}, \forall\bm{\mathit{f}}\in\bm{\mathit{L}}^2(\Omega)
	$. Then we have 
	$||\bm{\mathit{T}}_{\lambda_S}-\bm{\mathit{T}}_{\infty}||_1:= \sup\limits_{\mathbf{0}\neq\bm{\mathit{f}}\in\bm{\mathit{H}}^1_{0,\Gamma_0}(\Omega)} \frac{||(\bm{\mathit{T}}_{\lambda_S}-\bm{\mathit{T}}_{\infty})\bm{\mathit{f}}||_{1,\Omega}}{||\bm{\mathit{f}}||_{1,\Omega}}\leq C/\lambda_S.$
\end{proof}

\begin{lemma}
	\label{0220c}
	There exists a constant $C>0$ independent of $\lambda_S$ and
	$z$ such that
	\begin{equation}
		\label{0126b}
		||(z\bm{\mathit{I}}-\bm{\mathit{T}})\bm{\mathit{f}}||_{1,\Omega}\geq C dist(z,sp(\bm{\mathit{T}}))||\bm{\mathit{f}}||_{1,\Omega}\hspace{0.5em}\hspace{0.5em}\forall\bm{\mathit{f}}\in\bm{\mathit{H}}^1_{0,\Gamma_0}(\Omega),
	\end{equation}
	where $z\in\gamma$($\gamma$ is defined in (\ref{arc_1})) in \textbf{Case 1}(i.e.(\ref{Case1})); $z\in\gamma_{\infty}$($\gamma_{\infty}$ is defined in (\ref{arc_2})) in \textbf{Case 2}((\ref{Case2})) or \textbf{Case 3}((\ref{Case3})). $dist(z,sp(\bm{\mathit{T}}))$ is the distance between $z$ and the spectrum of $\bm{\mathit{T}}$ in the complex plane, which in principle depends on $\lambda_S$.
\end{lemma}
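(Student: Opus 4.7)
The plan is to combine the sharp $\bm{L}^2$ resolvent bound (coming from $\bm{T}$ being compact and self-adjoint) with the $\bm{H}^1$-smoothing of $\bm{T}$ already established in Section~3.

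Given $\bm{f}\in\bm{H}^1_{0,\Gamma_0}(\Omega)$, I set $\bm{g}:=(z\bm{I}-\bm{T})\bm{f}$. First I invoke the spectral theorem: since $\bm{T}:\bm{L}^2(\Omega)\to\bm{L}^2(\Omega)$ is compact and self-adjoint (Theorem~\ref{theorem3.1}),
\[
\|\bm{f}\|_{0,\Omega}\leq\frac{1}{\mathrm{dist}(z,sp(\bm{T}))}\|\bm{g}\|_{0,\Omega}.
\]
Next I rewrite $\bm{f}=z^{-1}(\bm{g}+\bm{T}\bm{f})$, take $\bm{H}^1$ norms, and use the Korn-based smoothing bound $\|\bm{T}\bm{f}\|_{1,\Omega}\leq C\|\bm{f}\|_{0,\Omega}$ derived in Section~3 (whose constant is $\lambda_S$-independent). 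Combining these with the trivial embedding $\|\bm{g}\|_{0,\Omega}\leq\|\bm{g}\|_{1,\Omega}$ gives
\[
\|\bm{f}\|_{1,\Omega}\leq\frac{1}{|z|}\|\bm{g}\|_{1,\Omega}+\frac{C}{|z|\,\mathrm{dist}(z,sp(\bm{T}))}\|\bm{g}\|_{1,\Omega}.
\]

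To collapse this into the asserted form, I handle the two terms separately. The first is immediate from $0\in sp(\bm{T})$, which gives $|z|\geq\mathrm{dist}(z,sp(\bm{T}))$, hence $|z|^{-1}\leq\mathrm{dist}(z,sp(\bm{T}))^{-1}$. For the second term I need a uniform lower bound $|z|\geq c_0>0$: in \textbf{Case~1}, the contour $\gamma$ is centered at $\mu=\mu_{\lambda_S}$ with radius $d_\mu\leq\mu_{\lambda_S}/2$ (again since $0\in sp(\bm{T})$), so $|z|\geq\mu_{\lambda_S}/2$, and continuity of the map $\lambda_S\mapsto\mu_{\lambda_S}$ on the bounded range $\lambda_S\leq M$ yields a uniform positive lower bound; in \textbf{Cases~2} and~\textbf{3}, $z\in\gamma_\infty$ directly gives $|z|\geq\mu_\infty/2>0$, a fixed quantity independent of $\lambda_S$. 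Rearranging produces (\ref{0126b}) with the desired $C$.

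The main obstacle is guaranteeing uniform $\lambda_S$-independence, particularly in \textbf{Case~2} as $\lambda_S\to\infty$. There one must additionally control $\mathrm{dist}(z,sp(\bm{T}_{\lambda_S}))$ from below for $z$ on the fixed contour $\gamma_\infty$ attached to $sp(\bm{T}_\infty)$. This is exactly where Lemma~\ref{0220a} enters: the estimate $\|\bm{T}_{\lambda_S}-\bm{T}_\infty\|_{\bm{L}^2\to\bm{L}^2}\leq\|\bm{T}_{\lambda_S}-\bm{T}_\infty\|_{\bm{L}^2\to\bm{H}^1}\leq C/\lambda_S$ together with standard spectral perturbation theory for compact self-adjoint operators forces $\mathrm{dist}(z,sp(\bm{T}_{\lambda_S}))\geq d_\infty/2$ once $\lambda_S$ is sufficiently large, after which the preceding argument applies verbatim and the constant $C$ in (\ref{0126b}) is seen to depend only on $\Omega$, $\mu_S$, $\rho_S$ and the fixed eigenvalue under consideration.
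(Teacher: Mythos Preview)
Your argument is correct and in fact more transparent than the paper's. Both proofs agree on the skeleton in \textbf{Case~2}---perturb from $\bm{T}_\infty$ via Lemma~\ref{0220a}---but they diverge in how the base estimate (Cases~1 and~3) is obtained. The paper appeals to an abstract $H^1$-resolvent bound from \cite[Theorem~1.13(b)]{RS2012} to control $\|(z\bm{I}-\bm{T})^{-1}\|_1$ directly; you instead combine the sharp $L^2$ resolvent identity for self-adjoint operators with the $\lambda_S$-independent smoothing $\|\bm{T}\bm{f}\|_{1,\Omega}\le C\|\bm{f}\|_{0,\Omega}$ already proved in Section~3. Your route is entirely self-contained and, interestingly, is precisely the bootstrap trick the paper itself deploys one lemma later (Lemma~\ref{0126c}) to pass from $\bm{H}^1_{0,\Gamma_0}$ to $\bm{W}(h)$. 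A second difference: in \textbf{Case~2} the paper applies Lemma~\ref{0220a} directly in the $H^1$ norm via a triangle inequality on $(z\bm{I}-\bm{T}_{\lambda_S})\bm{f}$, whereas you use it through spectral perturbation to guarantee $\gamma_\infty\cap sp(\bm{T}_{\lambda_S})=\emptyset$ and hence a uniform lower bound on $\mathrm{dist}(z,sp(\bm{T}_{\lambda_S}))$; both arguments are valid and yield the same conclusion.

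One small point worth tightening: your \textbf{Case~1} continuity argument (``continuity of $\lambda_S\mapsto\mu_{\lambda_S}$ on $\lambda_S\le M$'') is a bit quick, since it presupposes a consistent labeling of eigenvalue branches and a compact parameter range. The paper's own treatment of Case~1 is likewise terse on this issue; in practice Case~1 is the non-critical regime (no locking), so a constant depending on the fixed eigenvalue under study is acceptable there.
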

\begin{proof}
	In \textbf{Case 1}, clearly we know $\mathbb{C}\backslash sp(\bm{\mathit{T}})$ is open, hence $\forall z\in\gamma\subset\mathbb{C}\backslash sp(\bm{\mathit{T}})$, $\exists z_0\in\mathbb{C}\backslash sp(\bm{\mathit{T}})$, $\delta_0>0$ such that $z\in B(z_0,\delta_0)\subset(\mathbb{C}\backslash sp(\bm{\mathit{T}}))$. In terms of \cite[Theorem 1.13(b)]{RS2012}, we know for each $\delta\in(0,1)$, $||(z\bm{\mathit{I}}-\bm{\mathit{T}})^{-1}||_1\leq\frac{c(z_0)}{1-\delta}$. Since $0<dist(z,sp(\bm{\mathit{T}}))\leq|z|<2|z|,$ then $0<1-\frac{dist(z,sp(\bm{\mathit{T}}))}{2|z|}<1$. Thus if we set $\delta=1-\frac{dist(z,sp(\bm{\mathit{T}}))}{2|z|}$, then we have $||(z\bm{\mathit{I}}-\bm{\mathit{T}})^{-1}||_1\leq C/dist(z,sp(\bm{\mathit{T}}))$, where $C$ is independent of $\lambda_S$. Hence (\ref{0126b}) is proved. The analysis in \textbf{Case 3} is similar to \textbf{Case 1}.
	In \textbf{Case 2}, 
	For $\lambda_S$ big enough, $\forall z\in\gamma_{\infty}\subset(\mathbb{C}\backslash sp(\bm{\mathit{T}}_{\lambda_S}))$, combining \textbf{Case 3}, there exists a constant $C_1$ independent of $\lambda_S$ such that
	\begin{equation*}
		\label{0220d}
		\begin{aligned}		
			&||(z\bm{\mathit{I}}-\bm{\mathit{T}}_{\lambda_S})\bm{\mathit{f}}||_{1,\Omega}\geq||(z\bm{\mathit{I}}-\bm{\mathit{T}}_{\infty})\bm{\mathit{f}}||_{1,\Omega}-||(\bm{\mathit{T}}_{\infty}-\bm{\mathit{T}}_{\lambda_S})\bm{\mathit{f}}||_{1,\Omega}\\
			&\quad\geq C_1dist(z,sp(\bm{\mathit{T}}_{\infty}))||\bm{\mathit{f}}||_{1,\Omega}-||(\bm{\mathit{T}}_{\infty}-\bm{\mathit{T}}_{\lambda_S})\bm{\mathit{f}}||_{1,\Omega}
		\end{aligned}	
	\end{equation*}
	From Lemma \ref{0220a}, we know for $\lambda_S$ big enough, $||(\bm{\mathit{T}}_{\infty}-\bm{\mathit{T}}_{\lambda_S})\bm{\mathit{f}}||_{1,\Omega}\leq\frac{C_1}{2}dist(z,$ $sp(\bm{\mathit{T}}_{\infty}))||\bm{\mathit{f}}||_{1,\Omega}$. Then for all $z\in\gamma_{\infty}$, $||(z\bm{\mathit{I}}-\bm{\mathit{T}}_{\lambda_S})\bm{\mathit{f}}||_{1,\Omega}\geq Cdist(z,sp(\bm{\mathit{T}}_{\infty}))||\bm{\mathit{f}}||_{1,\Omega}$ $\geq C(|z-\mu_{\lambda_S}^i|-|\mu_{\lambda_S}^i-\mu_{\infty}|)||\bm{\mathit{f}}||_{1,\Omega},$
	for all $1\leq i\leq m_{\infty}$. Since by Lemma \ref{0220a} and \cite[Theorem 6]{Osborn1975}, we have $|\mu_{\lambda_S}^i-\mu_{\infty}|\to0$ as $\lambda_S\to\infty$, then for all $z\in\gamma_{\infty}$ and $\lambda_S$ big enough, we can obtain the desired assertion.
\end{proof}
\begin{lemma}
	\label{0126c}
	There exists a constant $C>0$ independent of  $h$ and $\lambda_S$ such that
	$$||(z\bm{\mathit{I}}-\bm{\mathit{T}})\bm{\mathit{f}}||_{1,h}\geq Cdist(z,sp(\bm{\mathit{T}}))|z|||\bm{\mathit{f}}||_{1,h} \hspace{0.5em}\hspace{0.5em}\forall\bm{\mathit{f}}\in\bm{\mathit{W}}(h),$$
	where $z\in\gamma$($\gamma$ is defined in (\ref{arc_1})) in \textbf{Case 1}(i.e.(\ref{Case1})); $z\in\gamma_{\infty}$($\gamma_{\infty}$ is defined in (\ref{arc_2})) in \textbf{Case 2}((\ref{Case2})) or \textbf{Case 3}((\ref{Case3})).
\end{lemma}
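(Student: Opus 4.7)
The strategy is to reduce the discrete resolvent estimate to the continuous one in Lemma~\ref{0220c} via an Oswald-type conforming reconstruction, exploiting that $\bm{\mathit{T}}\bm{\mathit{f}}\in\bm{\mathit{H}}^1_{0,\Gamma_0}(\Omega)$ (Assumption~\ref{0119a}) so $\bm{\mathit{T}}\bm{\mathit{f}}$ contributes no jumps to the broken norm. Given $\bm{\mathit{f}}\in\bm{\mathit{W}}(h)=\bm{\mathit{W}}^h+\bm{\mathit{H}}^1_{0,\Gamma_0}(\Omega)$, I would fix any decomposition $\bm{\mathit{f}}=\bm{\mathit{f}}_h+\bm{\mathit{f}}_s$ and apply Oswald averaging to $\bm{\mathit{f}}_h$ to obtain a conforming companion $\bm{\mathit{f}}^c\in\bm{\mathit{H}}^1_{0,\Gamma_0}(\Omega)$. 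Writing $J(\bm{\mathit{f}}):=\sum_{F\in\varepsilon^{I}\cup\varepsilon^{D}}h_F^{-1}||[\![\bm{\mathit{f}}]\!]||_{L^2(F)}^2$, the standard shape-regular Oswald bounds give
\begin{equation*}
||\nabla(\bm{\mathit{f}}-\bm{\mathit{f}}^c)||_{0,\mathrm{broken}}^2+h^{-2}||\bm{\mathit{f}}-\bm{\mathit{f}}^c||_{0,\Omega}^2\leq CJ(\bm{\mathit{f}}),\quad ||\bm{\mathit{f}}^c||_{1,\Omega}\leq C||\bm{\mathit{f}}||_{1,h},
\end{equation*}
together with $||\bm{\mathit{f}}||_{1,h}^2\leq C(||\bm{\mathit{f}}^c||_{1,\Omega}^2+J(\bm{\mathit{f}}))$, with all constants depending only on the shape-regularity of $\mathcal{T}_h$.

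Two complementary lower bounds on $||(z\bm{\mathit{I}}-\bm{\mathit{T}})\bm{\mathit{f}}||_{1,h}$ are then combined. The first is a jump identity: since $\bm{\mathit{T}}\bm{\mathit{f}}$ is jump-free, one has $[\![(z\bm{\mathit{I}}-\bm{\mathit{T}})\bm{\mathit{f}}]\!]=z[\![\bm{\mathit{f}}]\!]$ on every $F\in\varepsilon^{I}\cup\varepsilon^{D}$, whence
\begin{equation*}
||(z\bm{\mathit{I}}-\bm{\mathit{T}})\bm{\mathit{f}}||_{1,h}^2\geq|z|^2 J(\bm{\mathit{f}}).
\end{equation*}
The second uses the conforming companion: because $(z\bm{\mathit{I}}-\bm{\mathit{T}})\bm{\mathit{f}}^c\in\bm{\mathit{H}}^1_{0,\Gamma_0}(\Omega)$, its $||\cdot||_{1,h}$ and $||\cdot||_{1,\Omega}$ norms coincide, and Lemma~\ref{0220c} yields $||(z\bm{\mathit{I}}-\bm{\mathit{T}})\bm{\mathit{f}}^c||_{1,h}\geq C\,\mathrm{dist}(z,\mathrm{sp}(\bm{\mathit{T}}))||\bm{\mathit{f}}^c||_{1,\Omega}$. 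The remainder satisfies
\begin{equation*}
||(z\bm{\mathit{I}}-\bm{\mathit{T}})(\bm{\mathit{f}}-\bm{\mathit{f}}^c)||_{1,h}\leq|z|\,||\bm{\mathit{f}}-\bm{\mathit{f}}^c||_{1,h}+||\bm{\mathit{T}}(\bm{\mathit{f}}-\bm{\mathit{f}}^c)||_{1,\Omega}\leq C(|z|+h)J(\bm{\mathit{f}})^{1/2},
\end{equation*}
by the Oswald bounds above and the $\bm{\mathit{L}}^2\to\bm{\mathit{H}}^1$ continuity of $\bm{\mathit{T}}$ in Assumption~\ref{0119a}. A triangle inequality then produces the conforming lower bound $||(z\bm{\mathit{I}}-\bm{\mathit{T}})\bm{\mathit{f}}||_{1,h}\geq C\,\mathrm{dist}(z,\mathrm{sp}(\bm{\mathit{T}}))||\bm{\mathit{f}}^c||_{1,\Omega}-C(|z|+h)J(\bm{\mathit{f}})^{1/2}$.

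An appropriate convex combination of the two bounds absorbs the negative jump contribution; using $\mathrm{dist}(z,\mathrm{sp}(\bm{\mathit{T}}))\leq|z|$ (since $0\in\mathrm{sp}(\bm{\mathit{T}})$) and the equivalence $||\bm{\mathit{f}}||_{1,h}\leq C(||\bm{\mathit{f}}^c||_{1,\Omega}+J(\bm{\mathit{f}})^{1/2})$, one deduces $||(z\bm{\mathit{I}}-\bm{\mathit{T}})\bm{\mathit{f}}||_{1,h}\geq C\,\mathrm{dist}(z,\mathrm{sp}(\bm{\mathit{T}}))||\bm{\mathit{f}}||_{1,h}$. The extra factor $|z|$ in the claim is then inserted free of charge since $|z|$ is uniformly bounded above on $\gamma$ (Case~1) and on $\gamma_{\infty}$ (Cases~2 and 3), independently of $\lambda_S$, so $|z|/\sup|z|\leq 1$ absorbs into the constant. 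The hardest part will be maintaining $\lambda_S$-uniformity of all constants: the Oswald estimates are purely mesh-geometric and thus safe, while Lemma~\ref{0220c} and the continuity of $\bm{\mathit{T}}$ are already $\lambda_S$-uniform by Lemma~\ref{0220a} and Assumption~\ref{0119a}, so the reduction inherits uniformity. A minor technical point is the non-uniqueness of the splitting $\bm{\mathit{f}}=\bm{\mathit{f}}_h+\bm{\mathit{f}}_s$, which is harmless because the Oswald estimates depend only on the face traces of $\bm{\mathit{f}}$ itself, not on the particular decomposition.
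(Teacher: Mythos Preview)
Your argument is correct, but it takes a substantially longer route than the paper's. The paper avoids Oswald averaging entirely by exploiting the smoothing of $\bm{\mathit{T}}$ itself as the ``conforming companion'': set $\bm{\mathit{f}}^{\star}:=\bm{\mathit{T}}\bm{\mathit{f}}\in\bm{\mathit{H}}^{1}_{0,\Gamma_0}(\Omega)$, use the commutation $(z\bm{\mathit{I}}-\bm{\mathit{T}})\bm{\mathit{f}}^{\star}=\bm{\mathit{T}}(z\bm{\mathit{I}}-\bm{\mathit{T}})\bm{\mathit{f}}$, apply Lemma~\ref{0220c} to $\bm{\mathit{f}}^{\star}$, and bound $||\bm{\mathit{T}}(z\bm{\mathit{I}}-\bm{\mathit{T}})\bm{\mathit{f}}||_{1,\Omega}\leq C||(z\bm{\mathit{I}}-\bm{\mathit{T}})\bm{\mathit{f}}||_{0,\Omega}\leq C||(z\bm{\mathit{I}}-\bm{\mathit{T}})\bm{\mathit{f}}||_{1,h}$ via the $\bm{\mathit{L}}^2\to\bm{\mathit{H}}^1$ boundedness of $\bm{\mathit{T}}$ and the discrete Poincar\'e inequality on $\bm{\mathit{W}}(h)$. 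A single triangle inequality $|z|\,||\bm{\mathit{f}}||_{1,h}\leq ||(z\bm{\mathit{I}}-\bm{\mathit{T}})\bm{\mathit{f}}||_{1,h}+||\bm{\mathit{f}}^{\star}||_{1,h}$ then closes the estimate in three lines.

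Your Oswald-based reduction reproduces the same result but requires constructing and controlling a conforming interpolant, a separate jump-identity lower bound, and a convex-combination step to absorb the negative remainder term $C(|z|+h)J(\bm{\mathit{f}})^{1/2}$; this last step also tacitly uses that $h$ is bounded (by $\mathrm{diam}\,\Omega$) and that $|z|$ is bounded below on the arc, which you should make explicit. The payoff of your approach is a slightly sharper intermediate inequality (without the $|z|$ factor), but since $|z|$ is uniformly comparable to a constant on $\gamma$ or $\gamma_\infty$ this gains nothing for the lemma as stated. The paper's trick of letting $\bm{\mathit{T}}$ do the regularization is both shorter and cleaner for $\lambda_S$-uniformity, since it only invokes the already-established $\lambda_S$-independent bound $||\bm{\mathit{T}}\bm{\mathit{g}}||_{1,\Omega}\leq C||\bm{\mathit{g}}||_{0,\Omega}$.
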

\begin{proof}
	We introduce $\bm{\mathit{f}}^{\star}:=\bm{\mathit{T}}\bm{\mathit{f}}\in\bm{\mathit{H}}^{1+s}_{0,\Gamma_0}(\Omega)$, and notice that $(z\bm{\mathit{I}}-\bm{\mathit{T}})\bm{\mathit{f}}^{\star}=\bm{\mathit{T}}(z\bm{\mathit{I}}-\bm{\mathit{T}})\bm{\mathit{f}}$. By virtue of Lemma \ref{0220c}, we have that
	\begin{equation*}
		\begin{aligned}	
			Cdist(z,sp(\bm{\mathit{T}}))||\bm{\mathit{f}}^{\star}||_{1,\Omega}&\leq ||(z\bm{\mathit{I}}-\bm{\mathit{T}})\bm{\mathit{f}}^{\star}||_{1,\Omega}=||\bm{\mathit{T}}(z\bm{\mathit{I}}-\bm{\mathit{T}})\bm{\mathit{f}}||_{1,\Omega}\\
			&\leq C||(z\bm{\mathit{I}}-\bm{\mathit{T}})\bm{\mathit{f}}||_{0,\Omega}\leq C||(z\bm{\mathit{I}}-\bm{\mathit{T}})\bm{\mathit{f}}||_{1,h}. 	
		\end{aligned}	
	\end{equation*}
	Finally, by the triangle inequality,
	\begin{equation*}
		\begin{aligned}	
			&||\bm{\mathit{f}}||_{1,h}\leq \frac{1}{|z|}(||(z\bm{\mathit{I}}-\bm{\mathit{T}})\bm{\mathit{f}}||_{1,h}+||\bm{\mathit{f}}^{\star}||_{1,\Omega})\leq \frac{1}{|z|}(1+\frac{C}{dist(z,sp(\bm{\mathit{T}}))})||(z\bm{\mathit{I}}-\bm{\mathit{T}})\bm{\mathit{f}}||_{1,h}\\&\quad= \frac{C+dist(z,sp(\bm{\mathit{T}}))}{|z|dist(z,sp(\bm{\mathit{T}}))}||(z\bm{\mathit{I}}-\bm{\mathit{T}})\bm{\mathit{f}}||_{1,h}\leq\frac{C}{|z|dist(z,sp(\bm{\mathit{T}}))}||(z\bm{\mathit{I}}-\bm{\mathit{T}})\bm{\mathit{f}}||_{1,h},	 	
		\end{aligned}	
	\end{equation*}
	where the boundedness of $dist(z,sp(\bm{\mathit{T}}))$ in three cases have been used.	
\end{proof}
\begin{lemma}
	\label{0127b}
	There exists $h_0>0$ such that if $h\leq h_0$,
	$$||(z\bm{\mathit{I}}-\bm{\mathit{T}}_h)\bm{\mathit{f}}||_{1,h}\geq Cdist(z,sp(\bm{\mathit{T}}))|z|||\bm{\mathit{f}}||_{1,h}\hspace{0.5em}\hspace{0.5em}\forall\bm{\mathit{f}}\in\bm{\mathit{W}}(h),$$
	where $z\in\gamma$($\gamma$ is defined in (\ref{arc_1})) in \textbf{Case 1}(i.e.(\ref{Case1})); $z\in\gamma_{\infty}$($\gamma_{\infty}$ is defined in (\ref{arc_2})) in \textbf{Case 2}((\ref{Case2})) or \textbf{Case 3}((\ref{Case3})). $C>0$ is independent of $h$ and $\lambda_S$.
\end{lemma}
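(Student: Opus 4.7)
The plan is to treat this as a perturbation of the continuous resolvent estimate established in Lemma \ref{0126c}. For any $\bm{\mathit{f}}\in\bm{\mathit{W}}(h)$ and $z$ on the relevant contour, I split
$$(z\bm{\mathit{I}}-\bm{\mathit{T}}_h)\bm{\mathit{f}}=(z\bm{\mathit{I}}-\bm{\mathit{T}})\bm{\mathit{f}}+(\bm{\mathit{T}}-\bm{\mathit{T}}_h)\bm{\mathit{f}}$$
and apply the reverse triangle inequality in $\|\cdot\|_{1,h}$. Lemma \ref{0126c} bounds the first term from below by $C_1\cdot dist(z,sp(\bm{\mathit{T}}))\cdot|z|\cdot\|\bm{\mathit{f}}\|_{1,h}$, uniformly in $z$ on $\gamma$ (respectively $\gamma_\infty$), with $C_1$ independent of $h$ and $\lambda_S$. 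Everything then reduces to absorbing the perturbation $\|(\bm{\mathit{T}}-\bm{\mathit{T}}_h)\bm{\mathit{f}}\|_{1,h}$ into this lower bound for $h$ sufficiently small.

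For the perturbation I would write $\bm{\mathit{T}}\bm{\mathit{f}}-\bm{\mathit{T}}_h\bm{\mathit{f}}=(\bm{\mathit{T}}\bm{\mathit{f}}-\bm{\mathit{P}}\bm{\mathit{T}}\bm{\mathit{f}})+(\bm{\mathit{P}}\bm{\mathit{T}}\bm{\mathit{f}}-\bm{\mathit{T}}_h\bm{\mathit{f}})$. The second piece is controlled directly by Theorem \ref{1221a}, which gives $\|\bm{\mathit{P}}\bm{\mathit{T}}\bm{\mathit{f}}-\bm{\mathit{T}}_h\bm{\mathit{f}}\|_{1,h}\leq Ch^s\|\bm{\mathit{f}}\|_{0,\Omega}$. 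For the first piece, since $\bm{\mathit{T}}\bm{\mathit{f}}\in\bm{\mathit{H}}^{1+s}_{0,\Gamma_0}(\Omega)$ by Assumption \ref{0128b}, the jumps $[\![\bm{\mathit{T}}\bm{\mathit{f}}-\bm{\mathit{P}}\bm{\mathit{T}}\bm{\mathit{f}}]\!]$ coincide with jumps of the $\bm{\mathit{L}}^2$-projection error of a continuous function, which I handle via elementwise approximation estimates for $\bm{\mathit{P}}$ combined with standard scaled trace inequalities to obtain $\|\bm{\mathit{T}}\bm{\mathit{f}}-\bm{\mathit{P}}\bm{\mathit{T}}\bm{\mathit{f}}\|_{1,h}\leq Ch^s\|\bm{\mathit{T}}\bm{\mathit{f}}\|_{1+s,\Omega}\leq Ch^s\|\bm{\mathit{f}}\|_{0,\Omega}$, with the last step again by Assumption \ref{0128b}. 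Converting $\|\bm{\mathit{f}}\|_{0,\Omega}$ to $\|\bm{\mathit{f}}\|_{1,h}$ through the discrete Sobolev embedding stated after Theorem \ref{1220gg} (valid for $\bm{\mathit{f}}\in\bm{\mathit{W}}(h)$), I obtain
$$\|(\bm{\mathit{T}}-\bm{\mathit{T}}_h)\bm{\mathit{f}}\|_{1,h}\leq Ch^s\|\bm{\mathit{f}}\|_{1,h}$$
with constants independent of $h$ and $\lambda_S$.

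To close, I choose $h_0$ so small that $Ch_0^s\leq \tfrac{1}{2}C_1\cdot\inf_{z}dist(z,sp(\bm{\mathit{T}}))|z|$, where the infimum is over the compact contour $\gamma$ in Case 1 or $\gamma_\infty$ in Cases 2--3 and is strictly positive and uniform in $\lambda_S$ (the needed lower bound on $dist(z,sp(\bm{\mathit{T}}_{\lambda_S}))$ in Case 2 was exactly what was exploited in the proof of Lemma \ref{0220c}). With this choice the perturbation term is absorbed and yields the stated estimate with constant $\tfrac{1}{2}C_1$. The main obstacle I anticipate is the bookkeeping in $\|\bm{\mathit{T}}\bm{\mathit{f}}-\bm{\mathit{P}}\bm{\mathit{T}}\bm{\mathit{f}}\|_{1,h}$: only the regularity $H^{1+s}$ with $s\in(0,s_0]$ is available, so each term in the broken $H^1$-norm and each jump contribution must be handled carefully via trace inequalities, ensuring that hidden constants never involve $\lambda_S$.
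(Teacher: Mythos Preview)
Your proposal is correct and follows essentially the same approach as the paper: the identical three-term decomposition $(z\bm{\mathit{I}}-\bm{\mathit{T}})\bm{\mathit{f}}+(\bm{\mathit{T}}-\bm{\mathit{P}}\bm{\mathit{T}})\bm{\mathit{f}}+(\bm{\mathit{P}}\bm{\mathit{T}}-\bm{\mathit{T}}_h)\bm{\mathit{f}}$, the lower bound from Lemma~\ref{0126c}, the estimate $\|\bm{\mathit{T}}\bm{\mathit{f}}-\bm{\mathit{P}}\bm{\mathit{T}}\bm{\mathit{f}}\|_{1,h}\leq Ch^s\|\bm{\mathit{f}}\|_{1,h}$ via approximation and trace inequalities, Theorem~\ref{1221a} for the second perturbation term, and absorption for small $h$. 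The only cosmetic difference is that you spell out the passage $\|\bm{\mathit{f}}\|_{0,\Omega}\leq C\|\bm{\mathit{f}}\|_{1,h}$ via the discrete Sobolev embedding, which the paper uses tacitly.
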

\begin{proof}
	By 
	$(z\bm{\mathit{I}}-\bm{\mathit{T}}_h)\bm{\mathit{f}}=(z\bm{\mathit{I}}-\bm{\mathit{T}})\bm{\mathit{f}}+(\bm{\mathit{T}}-\bm{\mathit{P}}\bm{\mathit{T}})\bm{\mathit{f}}+(\bm{\mathit{P}}\bm{\mathit{T}}-\bm{\mathit{T}}_h)\bm{\mathit{f}}$
	and Lemma \ref{0126c},
	$$||(z\bm{\mathit{I}}-\bm{\mathit{T}}_h)\bm{\mathit{f}}||_{1,h}\geq Cdist(z,sp(\bm{\mathit{T}}))|z|||\bm{\mathit{f}}||_{1,h}-||(\bm{\mathit{T}}-\bm{\mathit{P}}\bm{\mathit{T}})\bm{\mathit{f}}||_{1,h}-||(\bm{\mathit{P}}\bm{\mathit{T}}-\bm{\mathit{T}}_h)\bm{\mathit{f}}||_{1,h}$$
	Clearly we have $
	||\bm{\mathit{T}}\bm{\mathit{f}}-\bm{\mathit{P}}(\bm{\mathit{T}}\bm{\mathit{f}})||_{1,h}\leq Ch^s||\bm{\mathit{T}}\bm{\mathit{f}}||_{1+s,\Omega}\leq Ch^s||\bm{\mathit{f}}||_{0,\Omega}\leq Ch^s||\bm{\mathit{f}}||_{1,h}$,
	and by virtue of Theorem \ref{1221a}, we know that
	\begin{equation}
		\label{0126e}
		||(\bm{\mathit{P}}\bm{\mathit{T}}-\bm{\mathit{T}}_h)\bm{\mathit{f}}||_{1,h}\leq Ch^s||\bm{\mathit{f}}||_{0,\Omega}\leq Ch^s||\bm{\mathit{f}}||_{1,h}
	\end{equation}
	Thus for $h$ small enough, $||\bm{\mathit{T}}\bm{\mathit{f}}-\bm{\mathit{P}}(\bm{\mathit{T}}\bm{\mathit{f}})||_{1,h}+||(\bm{\mathit{P}}\bm{\mathit{T}}-\bm{\mathit{T}}_h)\bm{\mathit{f}}||_{1,h}\leq\frac{C|z|dist(z,sp(\bm{\mathit{T}}))}{2} $ $||\bm{\mathit{f}}||_{1,h}$.
	Then we immediately obtain the desired assertion. 	
\end{proof}	



\begin{lemma}
	\label{0127c}
	There exists $h_0>0$ such that, for all $h\leq h_0$, $$||(\bm{\mathit{P}}\bm{\mathit{E}}-\bm{\mathit{E}}_h)|_{\bm{\mathit{W}}(h)}||_{1,h}\leq \frac{C}{d}||(\bm{\mathit{P}}\bm{\mathit{T}}-\bm{\mathit{T}}_h)|_{\bm{\mathit{W}}(h)}||_{1,h},$$
	with $C>0$ independent of $h$ and $\lambda_S$, where $d=d_{\mu}$($d_{\mu}$ is defined in (\ref{arc_1})) for  \textbf{Case 1}(i.e.(\ref{Case1})); $d=d_1^2/d_{\infty}$ with $d_1=dist(\gamma_{\infty},sp(\bm{\mathit{T}}_{\lambda_S}))$(here $d_{\infty},\gamma_{\infty}$ are defined in (\ref{arc_2})) for \textbf{Case 2}(i.e.(\ref{Case2})); or $d=d_{\infty}$ for \textbf{Case 3}(i.e.(\ref{Case3})). And $||(\bm{\mathit{P}}\bm{\mathit{E}}-\bm{\mathit{E}}_h)|_{\bm{\mathit{W}}(h)}||_{1,h}$ denotes the restriction of $\bm{\mathit{P}}\bm{\mathit{E}}-\bm{\mathit{E}}_h$ to $\bm{\mathit{W}}(h)$.
\end{lemma}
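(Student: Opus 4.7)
The plan is to write $(\bm{\mathit{P}}\bm{\mathit{E}}-\bm{\mathit{E}}_h)\bm{\mathit{f}}$ as a Dunford--Taylor contour integral of resolvents and then control it with the pseudo-resolvent estimates already in hand. Explicitly, for $\bm{\mathit{f}}\in\bm{\mathit{W}}(h)$, I would start from
\begin{equation*}
(\bm{\mathit{P}}\bm{\mathit{E}}-\bm{\mathit{E}}_h)\bm{\mathit{f}}=\frac{1}{2\pi i}\int_\Gamma\bigl(\bm{\mathit{P}}R_z(\bm{\mathit{T}})-R_z(\bm{\mathit{T}}_h)\bigr)\bm{\mathit{f}}\,dz,
\end{equation*}
where $\Gamma=\gamma$ in \textbf{Case 1} and $\Gamma=\gamma_\infty$ in \textbf{Cases 2, 3}. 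Setting $\bm{\mathit{x}}:=R_z(\bm{\mathit{T}})\bm{\mathit{f}}$, $\bm{\mathit{x}}_h:=R_z(\bm{\mathit{T}}_h)\bm{\mathit{f}}$ and $\bm{\mathit{w}}:=\bm{\mathit{P}}\bm{\mathit{x}}-\bm{\mathit{x}}_h$, the task reduces to bounding $\|\bm{\mathit{w}}\|_{1,h}$ pointwise in $z$.

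The central algebraic step is to prove
\begin{equation*}
(z\bm{\mathit{I}}-\bm{\mathit{T}}_h)\bm{\mathit{P}}\bm{\mathit{w}}=(\bm{\mathit{P}}\bm{\mathit{T}}-\bm{\mathit{T}}_h)\bm{\mathit{x}}.
\end{equation*}
The derivation rests on two structural observations about the discrete operator: since the right-hand side of the source system (\ref{mm}) is tested only against $\bm{\mathit{W}}^h$, one has $\bm{\mathit{T}}_h=\bm{\mathit{T}}_h\bm{\mathit{P}}$; and since $\bm{\mathit{T}}_h$ has image in $\bm{\mathit{W}}^h$, one has $\bm{\mathit{P}}\bm{\mathit{T}}_h=\bm{\mathit{T}}_h$. $\bm{\mathit{P}}$-projecting the resolvent equation $(z\bm{\mathit{I}}-\bm{\mathit{T}})\bm{\mathit{x}}=\bm{\mathit{f}}$ produces $z\bm{\mathit{P}}\bm{\mathit{x}}-\bm{\mathit{P}}\bm{\mathit{T}}\bm{\mathit{x}}=\bm{\mathit{P}}\bm{\mathit{f}}$, while $\bm{\mathit{P}}$-projecting $(z\bm{\mathit{I}}-\bm{\mathit{T}}_h)\bm{\mathit{x}}_h=\bm{\mathit{f}}$ yields $z\bm{\mathit{P}}\bm{\mathit{x}}_h-\bm{\mathit{T}}_h\bm{\mathit{x}}_h=\bm{\mathit{P}}\bm{\mathit{f}}$; subtracting gives the identity. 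The same manipulation shows $\bm{\mathit{w}}-\bm{\mathit{P}}\bm{\mathit{w}}=-(\bm{\mathit{I}}-\bm{\mathit{P}})\bm{\mathit{x}}_h=-\tfrac{1}{z}(\bm{\mathit{I}}-\bm{\mathit{P}})\bm{\mathit{f}}$, and as $\Gamma$ is a circle around $\mu>0$ (resp.\ $\mu_\infty>0$) whose radius is at most half the modulus of its center, it does not encircle the origin, so $\int_\Gamma z^{-1}\,dz=0$ by Cauchy's theorem, and consequently
\begin{equation*}
(\bm{\mathit{P}}\bm{\mathit{E}}-\bm{\mathit{E}}_h)\bm{\mathit{f}}=\frac{1}{2\pi i}\int_\Gamma\bm{\mathit{P}}\bm{\mathit{w}}\,dz.
\end{equation*}

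Now I would chain three estimates. Since $\bm{\mathit{P}}\bm{\mathit{w}}\in\bm{\mathit{W}}^h\subset\bm{\mathit{W}}(h)$, Lemma \ref{0127b} gives $\|\bm{\mathit{P}}\bm{\mathit{w}}\|_{1,h}\leq \frac{C}{dist(z,sp(\bm{\mathit{T}}))|z|}\|(\bm{\mathit{P}}\bm{\mathit{T}}-\bm{\mathit{T}}_h)\bm{\mathit{x}}\|_{1,h}$. Bounding the right-hand side by the restricted operator norm times $\|\bm{\mathit{x}}\|_{1,h}$ is legitimate because $\bm{\mathit{x}}=\tfrac{1}{z}\bm{\mathit{f}}+\tfrac{1}{z}\bm{\mathit{T}}\bm{\mathit{x}}\in\bm{\mathit{W}}(h)$ (the first summand is in $\bm{\mathit{W}}(h)$ by hypothesis, the second in $\bm{\mathit{H}}^1_{0,\Gamma_0}(\Omega)$). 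Lemma \ref{0126c} applied to $\bm{\mathit{x}}$ yields $\|\bm{\mathit{x}}\|_{1,h}\leq \frac{C\|\bm{\mathit{f}}\|_{1,h}}{dist(z,sp(\bm{\mathit{T}}))|z|}$. Integrating along $\Gamma$---whose arclength is $2\pi d_\mu$ (\textbf{Case 1}) or $2\pi d_\infty$ (\textbf{Cases 2, 3}), and along which $dist(z,sp(\bm{\mathit{T}}))\geq d_\mu,\,d_1,\,d_\infty$ respectively, while $|z|$ is bounded below by a positive constant independent of $h$ and $\lambda_S$---produces precisely the constants $1/d_\mu$, $d_\infty/d_1^2$ and $1/d_\infty$ claimed.

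The hardest part will be spotting and checking the clean commutator identity $(z\bm{\mathit{I}}-\bm{\mathit{T}}_h)\bm{\mathit{P}}\bm{\mathit{w}}=(\bm{\mathit{P}}\bm{\mathit{T}}-\bm{\mathit{T}}_h)\bm{\mathit{x}}$. Without invoking both $\bm{\mathit{T}}_h=\bm{\mathit{T}}_h\bm{\mathit{P}}$ and $\bm{\mathit{P}}\bm{\mathit{T}}_h=\bm{\mathit{T}}_h$, the natural manipulation leaves the residual $\bm{\mathit{T}}-\bm{\mathit{T}}_h$ instead of $\bm{\mathit{P}}\bm{\mathit{T}}-\bm{\mathit{T}}_h$, which would not match the quantity on the right of the lemma. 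A further subtlety is \textbf{Case 2}: the contour $\gamma_\infty$ has arclength tied to $d_\infty$, but the distance from $\gamma_\infty$ to $sp(\bm{\mathit{T}}_{\lambda_S})$ is $d_1$; it is the simultaneous appearance of both quantities in the integral estimate that produces the sharp ratio $d_1^2/d_\infty$.
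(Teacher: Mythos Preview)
Your proposal is correct and follows essentially the same approach as the paper. Both proofs write $\bm{\mathit{P}}\bm{\mathit{E}}-\bm{\mathit{E}}_h$ as a contour integral, reduce the integrand to $(z\bm{\mathit{I}}-\bm{\mathit{T}}_h)^{-1}(\bm{\mathit{P}}\bm{\mathit{T}}-\bm{\mathit{T}}_h)(z\bm{\mathit{I}}-\bm{\mathit{T}})^{-1}$ via the commutations $\bm{\mathit{P}}\bm{\mathit{T}}_h=\bm{\mathit{T}}_h=\bm{\mathit{T}}_h\bm{\mathit{P}}$, and then apply Lemmas~\ref{0126c} and~\ref{0127b} together with the arclength of $\gamma$ (resp.\ $\gamma_\infty$) and the lower bound $|z|\geq\tfrac{1}{2}\mu$; the paper simply states the resolvent-difference identity in one line, whereas you derive it by subtracting the projected resolvent equations, and you make the membership $\bm{\mathit{x}}=R_z(\bm{\mathit{T}})\bm{\mathit{f}}\in\bm{\mathit{W}}(h)$ and the Cauchy-theorem step $\int_\Gamma z^{-1}\,dz=0$ explicit, but these are elaborations rather than a different route.
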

\begin{proof}
	We deduce from the identity 
	$\bm{\mathit{P}}((z\bm{\mathit{I}}-\bm{\mathit{T}})^{-1}-(z\bm{\mathit{I}}-\bm{\mathit{T}}_h)^{-1})=\bm{\mathit{P}}(z\bm{\mathit{I}}-\bm{\mathit{T}}_h)^{-1}(\bm{\mathit{T}}-\bm{\mathit{T}}_h)(z\bm{\mathit{I}}-\bm{\mathit{T}})^{-1}=(z\bm{\mathit{I}}-\bm{\mathit{T}}_h)^{-1}(\bm{\mathit{P}}\bm{\mathit{T}}-\bm{\mathit{T}}_h)(z\bm{\mathit{I}}-\bm{\mathit{T}})^{-1}$
	that, in \textbf{Case 1}, for any $\bm{\mathit{f}}\in\bm{\mathit{W}}(h)$
	\begin{equation}
		\label{0128a}
		\begin{aligned}	
			&||(\bm{\mathit{P}}\bm{\mathit{E}}-\bm{\mathit{E}}_h)\bm{\mathit{f}}||_{1,h}
			\leq\frac{1}{2\pi}\int_{\gamma}||(z\bm{\mathit{I}}-\bm{\mathit{T}}_h)^{-1}(\bm{\mathit{P}}\bm{\mathit{T}}-\bm{\mathit{T}}_h)(z\bm{\mathit{I}}-\bm{\mathit{T}})^{-1}\bm{\mathit{f}}||_{1,h}|dz|\\
			&\leq d_{\mu} \frac{C}{d_{\mu}|z|} ||(\bm{\mathit{P}}\bm{\mathit{T}}-\bm{\mathit{T}}_h)|_{\bm{\mathit{W}}(h)}||_{1,h}\frac{C}{d_{\mu}|z|}||\bm{\mathit{f}}||_{1,h}\leq\frac{C}{d_{\mu}}||(\bm{\mathit{P}}\bm{\mathit{T}}-\bm{\mathit{T}}_h)|_{\bm{\mathit{W}}(h)}||_{1,h}||\bm{\mathit{f}}||_{1,h},	
		\end{aligned}	
	\end{equation}
	where Lemmas \ref{0126c}, \ref{0127b} and the fact that
	for all $z\in\gamma$, $|z|\geq \mu-d_{\mu}\geq \frac{1}{2}\mu$ have been used. The analysis in \textbf{Case 3} is similar to \textbf{Case 1}. Besides, in \textbf{Case 2}, for any $\bm{\mathit{f}}\in\bm{\mathit{W}}(h)$,
	$||(\bm{\mathit{P}}\bm{\mathit{E}}_{\lambda_S}-\bm{\mathit{E}}_{\lambda_S,h})\bm{\mathit{f}}||_{1,h}
			\leq\frac{1}{2\pi}\int_{\gamma_{\infty}}||(z\bm{\mathit{I}}-\bm{\mathit{T}}_{\lambda_S,h})^{-1}(\bm{\mathit{P}}\bm{\mathit{T}}_{\lambda_S}
			-\bm{\mathit{T}}_{\lambda_S,h})(z\bm{\mathit{I}}-\bm{\mathit{T}}_{\lambda_S})^{-1}\bm{\mathit{f}}||_{1,h}|dz|\leq\frac{Cd_{\infty}}{d_1^2}||(\bm{\mathit{P}}\bm{\mathit{T}}_{\lambda_S}-\bm{\mathit{T}}_{\lambda_S,h})|_{\bm{\mathit{W}}(h)}||_{1,h}||\bm{\mathit{f}}||_{1,h}$.
\end{proof}

\begin{theorem}
	\label{0308b}
	Let $r>0$ be such that $R(\bm{\mathit{E}})\subset{\bm{\mathit{H}}}^{1+r}(\Omega)$ with the regularity assumption
	\begin{equation}
		\label{0119f}
		||\underline{\bm{\sigma}}||_{r,\Omega}+||\mathbf{u}||_{1+r,\Omega}+||\underline{\bm{\rho}}||_{r,\Omega}\leq C^{reg}||\mathbf{u}||_{0,\Omega},\hspace{1em}\forall \mathbf{u}\in R(\bm{\mathit{E}}) 
	\end{equation}
	where $C^{reg}$ is independent of $\lambda_S$ and $(\underline{\bm{\sigma}},\underline{\bm{\rho}})$ together with $(\lambda,\mathbf{u})$ is the solution to (\ref{0112a}). $R(\bm{\mathit{E}})$ is the eigenspace associated with $\mu$ in \textbf{Case 1}(i.e.(\ref{Case1})) or $\mu_{\infty}$ in \textbf{Case 3}((\ref{Case3})), or it is the direct sum of eigenspaces corresponding to $\mu_{\lambda_S}^1,\mu_{\lambda_S}^2,\cdot\cdot\cdot,\mu_{\lambda_S}^{m_{\infty}}$ contained in $\gamma_{\infty}$(i.e. (\ref{arc_2})) in \textbf{Case 2}((\ref{Case2})). Then $\exists C>0$ independent of $h$ and $\lambda_S$ such that, for $h$ small enough,
	\begin{equation} 
		\label{0116c}
		\hat{\delta}(\bm{\mathit{P}}(R(\bm{\mathit{E}})),  R(\bm{\mathit{E}}_h))\leq \frac{C}{d}h^{\rm min\{r,k+2\}},
	\end{equation}
	where $d=d_{\mu}$($d_{\mu}$ is defined in (\ref{arc_1})) for \textbf{Case 1}; $d=d_1^2/d_{\infty}$ with $d_1=dist(\gamma_{\infty},$ $sp(\bm{\mathit{T}}_{\lambda_S}))$ ($d_{\infty}$ is defined in (\ref{arc_2})) for \textbf{Case 2}; or $d=d_{\infty}$ for \textbf{Case 3}. Moreover,
	\begin{equation} 
		\label{1231b}	
		|\lambda-\lambda_{hj}|\leq Ch^{2\rm min\{r,k+1\}}\hspace{0.5em}	{\rm in\hspace{0.5em}\textbf{Case 1}},\hspace{0.5em}|\lambda_{\lambda_S}^i-\lambda_{\lambda_S,hj}^i|\leq Ch^{2\rm min\{r,k+1\}}\hspace{0.5em}	{\rm in\hspace{0.5em}\textbf{Case 2}}, 
	\end{equation}
	for all $1\leq i\leq m_{\infty}$, $1\leq j\leq m$. $m$ is the multiplicity of $\lambda$ in \textbf{Case 1} or $\lambda_{\lambda_S}^i$ in \textbf{Case 2}. And in \textbf{Case 3}, $|\lambda_{\infty}-\lambda_{\infty,hj}|\leq Ch^{2\rm min\{r,k+1\}}$, for all $1\leq j\leq m_{\infty}$.	
\end{theorem}
\begin{proof}
	For $\bm{\mathit{f}}\in R(\bm{\mathit{E}})\subset\bm{\mathit{H}}^1_{0,\Gamma_0}(\Omega)$ with $||\bm{\mathit{f}}||_{1,h}=1$, we have $||\bm{\mathit{P}}\bm{\mathit{f}}-\bm{\mathit{E}}_h\bm{\mathit{f}}||_{1,h}=||(\bm{\mathit{P}}\bm{\mathit{E}}-\bm{\mathit{E}}_h)\bm{\mathit{f}}||_{1,h}\leq||(\bm{\mathit{P}}\bm{\mathit{E}}-\bm{\mathit{E}}_h)|_{R(\bm{\mathit{E}})}||_{1,h}\leq||(\bm{\mathit{P}}\bm{\mathit{E}}-\bm{\mathit{E}}_h)|_{\bm{\mathit{W}}(h)}||_{1,h}$. Hence we have $\delta(\bm{\mathit{P}}(R(\bm{\mathit{E}})), R(\bm{\mathit{E}}_h))\leq||(\bm{\mathit{P}}\bm{\mathit{E}}-\bm{\mathit{E}}_h)|_{\bm{\mathit{W}}(h)}||_{1,h}$; For $\bm{\mathit{f}}_h\in R(\bm{\mathit{E}}_h)\subset\bm{\mathit{W}}^h$ with $||\bm{\mathit{f}}_h||_{1,h}=1$, we have $||\bm{\mathit{f}}_h-\bm{\mathit{P}}\bm{\mathit{E}}\bm{\mathit{f}}_h||_{1,h}\leq||(\bm{\mathit{P}}\bm{\mathit{E}}-\bm{\mathit{E}}_h)\bm{\mathit{f}}_h||_{1,h}\leq||(\bm{\mathit{P}}\bm{\mathit{E}}-\bm{\mathit{E}}_h)|_{\bm{\mathit{W}}^h}||_{1,h}\leq||(\bm{\mathit{P}}\bm{\mathit{E}}-\bm{\mathit{E}}_h)|_{\bm{\mathit{W}}(h)}||_{1,h}$. Then by virtue of Lemma \ref{0127c} and Theorem \ref{1221a}, we know that $||(\bm{\mathit{P}}\bm{\mathit{E}}-\bm{\mathit{E}}_h)|_{\bm{\mathit{W}}(h)}||_{1,h}$ converges to 0 as $h\to0$. Thus $\lim_{h\to0}\hat{\delta}(\bm{\mathit{P}}(R(\bm{\mathit{E}})),  R(\bm{\mathit{E}}_h))=0.$ 
	Applying \cite[Lemma 212, Lemma 213]{KT1958}, we have for $h$ small enough,
	$dim(\bm{\mathit{P}}(R(\bm{\mathit{E}})))$ $=dim(R(\bm{\mathit{E}}_h))$ and
	\begin{equation}
		\label{0127d}
		\delta(R(\bm{\mathit{E}}_h), \bm{\mathit{P}}(R(\bm{\mathit{E}}))\leq \frac{\delta( \bm{\mathit{P}}(R(\bm{\mathit{E}})),R(\bm{\mathit{E}}_h))}{1-\delta( \bm{\mathit{P}}(R(\bm{\mathit{E}})),R(\bm{\mathit{E}}_h))}
	\end{equation}
	Since $\lim_{h\to0}\hat{\delta}(\bm{\mathit{P}}(R(\bm{\mathit{E}})),  R(\bm{\mathit{E}}_h))=0,$ by (\ref{0127d}), we have $	
	\delta(R(\bm{\mathit{E}}_h), \bm{\mathit{P}}(R(\bm{\mathit{E}}))\leq C\delta( \bm{\mathit{P}}(R(\bm{\mathit{E}})),R(\bm{\mathit{E}}_h)),$
	for some constant $C$ and hence that
	\begin{equation}
		\label{0127f}	
		\hat{\delta}( \bm{\mathit{P}}(R(\bm{\mathit{E}})),R(\bm{\mathit{E}}_h))\leq (1+C)\delta( \bm{\mathit{P}}(R(\bm{\mathit{E}})),R(\bm{\mathit{E}}_h))	
	\end{equation}	
	Now for $\bm{\mathit{f}}\in R(\bm{\mathit{E}})$ with $||\bm{\mathit{f}}||_{1,h}=1$, clearly $(z\bm{\mathit{I}}-\bm{\mathit{T}})^{-1}\bm{\mathit{f}}\in R(\bm{\mathit{E}})$, similar to the proof of Lemma \ref{0127c}, we have
	\begin{equation}
		\label{0127g}	
		||\bm{\mathit{P}}\bm{\mathit{f}}-\bm{\mathit{E}}_h\bm{\mathit{f}}||_{1,h}
		=||(\bm{\mathit{P}}\bm{\mathit{E}}-\bm{\mathit{E}}_h)\bm{\mathit{f}}||_{1,h}		
		\leq\frac{C}{d}||(\bm{\mathit{P}}\bm{\mathit{T}}-\bm{\mathit{T}}_h)|_{R(\bm{\mathit{E}})}||_{1,h}		
	\end{equation}
	In terms of Theorem \ref{1221a} and the regularity assumption \ref{0119f}, we easily obtain $
	||(\bm{\mathit{P}}\bm{\mathit{T}}-\bm{\mathit{T}}_h)|_{R(\bm{\mathit{E}})}||_{1,h}\leq Ch^{\rm min\{r,k+2\}},$
	where $C$ is independent of $h$	and $\lambda_S$. Thus combining (\ref{0127f}) and (\ref{0127g}), (\ref{0116c}) is proved. By (\ref{0116c}), it's clear that  $\widetilde{\delta}(R(\bm{\mathit{E}}), R(\bm{\mathit{E}}_h))\leq Ch^{\rm min\{r,k+1\}}$, where $\widetilde{\delta}(R(\bm{\mathit{E}}), R(\bm{\mathit{E}}_h))$ is the gap between $R(\bm{\mathit{E}})$ and $R(\bm{\mathit{E}}_h)$ under $L^2$-norm. Then using similar analysis to \cite{BJJ1981}, we easily know there is a double order for eigenvalues error estimates so (\ref{1231b}) is proved.
\end{proof}

\subsection{Approximation of the eigenspaces of the stress and rotation for the eigenvalue problem}
\label{sec: Approximation of the eigenspaces of the stress and rotation}
\begin{theorem}
	\label{0308c}
	Let $\mathbf{u}$ be an eigenfunction of $\bm{\mathit{T}}$ associated with $\mu=\lambda^{-1}$ and let $\mathbf{u}_{hi}$, $i=1,2,\cdot\cdot\cdot,m$($m$ is the multiplicity of $\mu$) be the eigenfunctions corresponding to $m$ eigenvalues $\mu_{h1},\mu_{h2},\cdot\cdot\cdot,\mu_{hm}$ of $\bm{\mathit{T}}_h$, where $\mu_{hi}=\lambda_{hi}^{-1}$. Let $r>0$ be such that $R(\bm{\mathit{E}})\subset{\bm{\mathit{H}}}^{1+r}(\Omega)$ with the regularity assumption
	\begin{equation}
		\label{0119i}
		||\underline{\bm{\sigma}}||_{r,\Omega}+||\mathbf{u}||_{1+r,\Omega}+||\underline{\bm{\rho}}||_{r,\Omega}\leq C^{reg}||\mathbf{u}||_{0,\Omega},\hspace{1em}\forall \mathbf{u}\in R(\bm{\mathit{E}}) 
	\end{equation}
	where $C^{reg}$ is independent of $\lambda_S$ and $(\underline{\bm{\sigma}},\underline{\bm{\rho}})$ together with $(\lambda,\mathbf{u})$ is the solution to (\ref{0112a}). $R(\bm{\mathit{E}})$ is the eigenspace associated with $\mu$ in \textbf{Case 1}(i.e.(\ref{Case1})) or $\mu_{\infty}$ in \textbf{Case 3}((\ref{Case3})), or it is the direct sum of eigenspaces corresponding to $\mu_{\lambda_S}^1,\mu_{\lambda_S}^2,\cdot\cdot\cdot,\mu_{\lambda_S}^{m_{\infty}}$ contained in $\gamma_{\infty}$(i.e. (\ref{arc_2})) in \textbf{Case 2}((\ref{Case2})). And let $(\underline{\bm{\sigma}}_{hi}$, $\underline{\bm{\rho}}_{hi})$ together with $(\lambda_{hi},\mathbf{u}_{hi})$ be the approximation  given by (\ref{aa}). For the eigenvalue problem, we have
	\begin{equation}
		\label{stress gap}
		\widetilde{\delta}(R(\bm{\mathit{E}}_{\underline{\bm{\sigma}}}),  R(\bm{\mathit{E}}_{\underline{\bm{\sigma}}_h}))\leq Ch^{\rm min\{r,k+2\}}/d,\widetilde{\delta}(R(\bm{\mathit{E}}_{\underline{\bm{\rho}}}),  R(\bm{\mathit{E}}_{\underline{\bm{\rho}}_h}))\leq Ch^{\rm min\{r,k+2\}}/d
	\end{equation}
	where $R(\bm{\mathit{E}}_{\underline{\bm{\sigma}}})$, $R(\bm{\mathit{E}}_{\underline{\bm{\sigma}}_h})$ are the eigenspaces of the exact and numerical stresses. And
	$\widetilde{\delta}(R(\bm{\mathit{E}}_{\underline{\bm{\sigma}}}),  R(\bm{\mathit{E}}_{\underline{\bm{\sigma}}_h}))$ is the gap between $R(\bm{\mathit{E}}_{\underline{\bm{\sigma}}})$ and $R(\bm{\mathit{E}}_{\underline{\bm{\sigma}}_h})$ under $L^2$-norm. The notation $\widetilde{\delta}(R(\bm{\mathit{E}}_{\underline{\bm{\rho}}}),  R(\bm{\mathit{E}}_{\underline{\bm{\rho}}_h}))$ is defined similarly to $\widetilde{\delta}(R(\bm{\mathit{E}}_{\underline{\bm{\sigma}}}),  R(\bm{\mathit{E}}_{\underline{\bm{\sigma}}_h}))$.
	$d=d_{\mu}$($d_{\mu}$ is defined in (\ref{arc_1})) for  \textbf{Case 1}; $d=d_1^2/d_{\infty}$ with $d_1=dist(\gamma_{\infty},sp(\bm{\mathit{T}}_{\lambda_S}))$ ($d_{\infty}$ is defined in (\ref{arc_2})) for \textbf{Case 2}; or $d=d_{\infty}$ for \textbf{Case 3}. Here $C$ depends on the eigenvalue $\lambda$ and the exact solution but not on the grid size $h$ and the Lam${ \rm \acute{e}}$ coefficient $\lambda_S$.	
\end{theorem}
\begin{proof}
	In terms of (\ref{0112a}) and (\ref{hh}), we can obtain that 
	\begin{subequations}
		\label{abc}
		\begin{align}		
			(\mathcal{A}(\mathbf{\Pi}_h\underline{\bm{\sigma}}),\underline{\bm{\tau}}_h)_{\Omega}+(\bm{\mathit{P}}\mathbf{u},\textbf{div}\underline{\bm{\tau}}_h)_{\Omega}+(\underline{\bm{\mathit{P}}}\underline{\bm{\rho}},\underline{\bm{\tau}}_h)_{\Omega}&=(\mathcal{A}(\mathbf{\Pi}_h\underline{\bm{\sigma}}-\underline{\bm{\sigma}}),\underline{\bm{\tau}}_h)_{\Omega} \\
			-(\rho_S^{-1}\textbf{div}(\mathbf{\Pi}_h\underline{\bm{\sigma}}),\bm{\omega}_h)_{\Omega}=-(\rho_S^{-1}\bm{\mathit{P}}\textbf{div}\underline{\bm{\sigma}},\bm{\omega}_h)_{\Omega}&=\lambda(\mathbf{u},\bm{\omega}_h)_{\Omega}=\lambda(\bm{\mathit{P}}\mathbf{u},\bm{\omega}_h)_{\Omega} \\
			(\mathbf{\Pi}_h\underline{\bm{\sigma}},\underline{\bm{\eta}}_h)_{\Omega}&=(\mathbf{\Pi}_h\underline{\bm{\sigma}}-\underline{\bm{\sigma}},\underline{\bm{\eta}}_h)_{\Omega} 
		\end{align}	
	\end{subequations}
	for all $(\underline{\bm{\tau}}_h,\bm{\omega}_h,\underline{\bm{\eta}}_h)\in\underline{\bm{\mathit{V}}}^h\times\bm{\mathit{W}}^h\times\underline{\bm{\mathit{A}}}^h$.
	Recalling (\ref{aa}), we have that 
	\begin{equation}
		\label{bcd}
		A((\underline{\bm{\sigma}}_{hi},\mathbf{u}_{hi},\underline{\bm{\rho}}_{hi}),(\underline{\bm{\tau}}_h,\bm{\omega}_h,\underline{\bm{\eta}}_h))=\lambda_{hi} B(\mathbf{u}_{hi},\bm{\omega}_h),\hspace{0.2em}\forall(\underline{\bm{\tau}}_h,\bm{\omega}_h,\underline{\bm{\eta}}_h)\in\underline{\bm{\mathit{V}}}^h\times\bm{\mathit{W}}^h\times\underline{\bm{\mathit{A}}}^h.
	\end{equation}
	Define $\mathbf{e}_{\underline{\bm{\sigma}}}=\underline{\bm{\sigma}}_
	{hi}-\mathbf{\Pi}_h\underline{\bm{\sigma}}$, $\mathbf{e}_{\mathbf{u}}=\mathbf{u}_{hi}-\bm{\mathit{P}}\mathbf{u}$, $\mathbf{e}_{\underline{\bm{\rho}}}=\underline{\bm{\rho}}_{hi}-\underline{\bm{\mathit{P}}}\underline{\bm{\rho}}$. Then by subtracting (\ref{abc}) from (\ref{bcd}), we obtain that
	\begin{equation}
		\label{def}
		\begin{aligned}		
			A((\mathbf{e}_{\underline{\bm{\sigma}}},\mathbf{e}_{\mathbf{u}},\mathbf{e}_{\underline{\bm{\rho}}}),(\underline{\bm{\tau}}_h,\bm{\omega}_h,\underline{\bm{\eta}}_h))=
			(\lambda\bm{\mathit{P}}\mathbf{u}-\lambda_{hi}\mathbf{u}_{hi},\bm{\omega}_h)_{\Omega} \\+(\mathcal{A}(\underline{\bm{\sigma}}-\mathbf{\Pi}_h\underline{\bm{\sigma}}),\underline{\bm{\tau}}_h)_{\Omega}+(\underline{\bm{\sigma}}-\mathbf{\Pi}_h\underline{\bm{\sigma}},\underline{\bm{\eta}}_h)_{\Omega}
		\end{aligned}	
	\end{equation}
	In terms of (\ref{pp}), we obtain that there exists $(\widetilde{\underline{\bm{\tau}}}_h,\widetilde{\bm{\omega}}_h,\widetilde{\underline{\bm{\eta}}}_h)\in\underline{\bm{\mathit{V}}}^h\times\bm{\mathit{W}}^h\times\underline{\bm{\mathit{A}}}^h$ such that
	$
	\frac{A((\mathbf{e}_{\underline{\bm{\sigma}}},\mathbf{e}_{\mathbf{u}},\mathbf{e}_{\underline{\bm{\rho}}}),(\widetilde{\underline{\bm{\tau}}}_h,\widetilde{\bm{\omega}}_h,\widetilde{\underline{\bm{\eta}}}_h))}{||\widetilde{\underline{\bm{\tau}}}_h||_{\underline{\bm{\mathit{H}}}(\textbf{div};\Omega)}+||\widetilde{\bm{\omega}}_h||_{0,\Omega}+||\widetilde{\underline{\bm{\eta}}}_h||_{0,\Omega}}\geq\beta(||\mathbf{e}_{\underline{\bm{\sigma}}}||_{\underline{\bm{\mathit{H}}}(\textbf{div};\Omega)}+||\mathbf{e}_{\mathbf{u}}||_{0,\Omega}+||\mathbf{e}_{\underline{\bm{\rho}}}||_{0,\Omega}),
	$
	where $\beta>0$ independent of $h$ and $\lambda_S$. Combining (\ref{def}), we can obtain 
	\begin{equation}
		\label{fgh}
		\begin{aligned}		
			&\beta(||\mathbf{e}_{\underline{\bm{\sigma}}}||_{\underline{\bm{\mathit{H}}}(\textbf{div};\Omega)}+||\mathbf{e}_{\mathbf{u}}||_{0,\Omega}+||\mathbf{e}_{\underline{\bm{\rho}}}||_{0,\Omega})\leq (||\lambda\bm{\mathit{P}}\mathbf{u}-\lambda_{hi}\mathbf{u}_{hi}||_{0,\Omega}\cdot||\widetilde{\bm{\omega}}_h||_{0,\Omega}\\
			&\quad+C||\underline{\bm{\sigma}}-\mathbf{\Pi}_h\underline{\bm{\sigma}}||_{0,\Omega}\cdot(||\widetilde{\underline{\bm{\tau}}}_h||_{0,\Omega}+||\widetilde{\underline{\bm{\eta}}}_h||_{0,\Omega}))  /(||\widetilde{\underline{\bm{\tau}}}_h||_{\underline{\bm{\mathit{H}}}(\textbf{div};\Omega)}+||\widetilde{\bm{\omega}}_h||_{0,\Omega}+||\widetilde{\underline{\bm{\eta}}}_h||_{0,\Omega})\\
		\end{aligned}	
	\end{equation}
	where Cauchy-Schwartz inequality has been used. We estimate the terms in (\ref{fgh}), 
	\begin{equation}
		\label{ghi}
		\begin{aligned}		
			&||\lambda\bm{\mathit{P}}\mathbf{u}-\lambda_{hi}\mathbf{u}_{hi}||_{0,\Omega}\leq |\lambda|||\bm{\mathit{P}}\mathbf{u}-\mathbf{u}_{hi}||_{0,\Omega}+|\lambda-\lambda_{hi}|(||\bm{\mathit{P}}\mathbf{u}-\mathbf{u}_{hi}||_{0,\Omega}+||\bm{\mathit{P}}\mathbf{u}||_{0,\Omega})\\
			&\leq |\lambda| Ch^{\rm min\{r,k+2\}}/d+Ch^{\rm min\{r,2(k+1)\}}\cdot h^{\rm min\{r,k+2\}}/d+Ch^{\rm min\{r,2(k+1)\}}||\mathbf{u}||_{0,\Omega}\\
			&\leq Ch^{\rm min\{r,k+2\}}/d,
		\end{aligned}	
	\end{equation}
	where Theorem \ref{0308b} and the triangle inequality have been used. Moreover, we have 
	\begin{equation}
		\label{hij}
		\begin{aligned}		
			||\underline{\bm{\sigma}}-\mathbf{\Pi}_h\underline{\bm{\sigma}}||_{0,\Omega}
			\leq Ch^{\rm min\{r,k+2\}}||\mathbf{u}||_{0,\Omega}
			\leq Ch^{\rm min\{r,k+2\}}.
		\end{aligned}	
	\end{equation}
	where (\ref{ii}), (\ref{jj}), and the regularity assumption (\ref{0119i}) have been used. Combining (\ref{fgh}), (\ref{ghi}), and (\ref{hij}), we can obtain that
	$\beta(||\mathbf{e}_{\underline{\bm{\sigma}}}||_{\underline{\bm{\mathit{H}}}(\textbf{div};\Omega)}+||\mathbf{e}_{\mathbf{u}}||_{0,\Omega}+||\mathbf{e}_{\underline{\bm{\rho}}}||_{0,\Omega})\leq \frac{C}{d}h^{\rm min\{r,k+2\}}.$
	Thus
	\begin{equation}
		\label{stress approximation}
		||\underline{\bm{\sigma}}_{hi}-\underline{\bm{\sigma}}||_{0,\Omega}\leq	||\mathbf{e}_{\underline{\bm{\sigma}}}||_{0,\Omega}+||\underline{\bm{\sigma}}-\mathbf{\Pi}_h\underline{\bm{\sigma}}||_{0,\Omega}\leq Ch^{\rm min\{r,k+2\}}/d,
	\end{equation}
	for all $1\leq i\leq m$, and $C$ doesn't depend on $h$ and $\lambda_S$. 
	We recalled (\ref{0116c}) in Theorem \ref{0308b}, which approximates the $L^2$-orthogonal projection of the
	eigenspace of exact displacement in terms of the gap of subspaces. Since the stress and displacement appear in one-to-one correspondence and the error analysis above is closely dependent on (\ref{0116c}), then it's natural to express (\ref{stress approximation}) in terms of the gap of subspaces in the first inequality of (\ref{stress gap}).
	Similarly, we can also obtain the same estimate for the rotation in the second inequality of (\ref{stress gap}).
\end{proof}
\begin{remark}
	\label{apply to others}
	We point out that the spectral analyses in section \ref{sec: Spectral approximation}(i.e. Lemmas \ref{0220a}-\ref{0127c} and Theorems \ref{0308b}-\ref{0308c}) can cover all elements listed in Remark \ref{apply to other elements}.
\end{remark}
\section{Hybridization}
\label{sec: Hybridization}
In this section, we will provide hybridization of the source problem and the eigenproblem. Different from a related hybridization introduced in \cite{CJL2010} for Poisson equations, we utilize discrete $H^1$-stability of numerical displacement to prove Lemmas \ref{1217p} and \ref{1217hh}. Then these lemmas are used to prove Theorems \ref{1217j} and \ref{1217jj}. In Theorem \ref{1217jj}, we provide a good initial approximation of the eigenvalue for the nonlinear eigenproblem. The innovation can be found in Remarks \ref{1222l} and \ref{novelty remark}. For simplicity, we assume $\mathcal{A}$ a piecewise constant.
\subsection{Hybridization for the source problem}
\label{sec: Hybridization for the source problem}
We enlarge the mixed method (\ref{mm}) to include an additional unknown ${\bm{\gamma}}^{\mathit{f}}_h\in\bm{\mathit{M}}^h:=\{\bm{\mu}:\bm{\mu}|_F\in\bm{\mathit{p}}^{k+1}{(F)}$ for all  grid faces $F\in\varepsilon_h,$ and $\bm{\mu}|_{\Gamma_0}=\mathbf{0}\},$
and expand the space $\underline{\bm{\mathit{V}}}^h$ by removing $\underline{\bm{\mathit{H}}}(\textbf{div};\Omega)$-continuity constraints to obtain the space
$\widetilde{\underline{\bm{\mathit{V}}}}^h:=\{\underline{\bm{\upsilon}}\in\underline{\bm{\mathit{L}}}^2(\Omega):\underline{\bm{\upsilon}}|_K\in\underline{\bm{\mathit{V}}}(K)$ for all grid elements $K\in\mathcal{T}_h
\}.$
The solution of the hybridized method, $(\underline{\bm{\sigma}}^{\mathit{f}}_h,\mathbf{u}^{\mathit{f}}_h, \underline{\bm{\rho}}^{\mathit{f}}_h,{\bm{\gamma}}^{\mathit{f}}_h)\in\widetilde{\underline{\bm{\mathit{V}}}}^h\times\bm{\mathit{W}}^h\times\underline{\bm{\mathit{A}}}^h\times\bm{\mathit{M}}^h$, satisfies
\begin{subequations}
	\label{1216a}
	\begin{align}		
		(\mathcal{A}\underline{\bm{\sigma}}^{\mathit{f}}_h,\underline{\bm{\upsilon}})_{\mathcal{T}_h}+(\mathbf{u}^{\mathit{f}}_h,\textbf{div}\underline{\bm{\upsilon}})_{\mathcal{T}_h}+(\underline{\bm{\rho}}^{\mathit{f}}_h,\underline{\bm{\upsilon}})_{\mathcal{T}_h}+\left \langle {\bm{\gamma}}^{\mathit{f}}_h, \underline{\bm{\upsilon}}\mathbf{n}\right \rangle_{\partial\mathcal{T}_h}&=0 \\
		-(\rho_S^{-1}\textbf{div}\underline{\bm{\sigma}}^{\mathit{f}}_h,\bm{\omega})_{\mathcal{T}_h}&=(\bm{\mathit{f}},\bm{\omega})_{\mathcal{T}_h} \\
		(\underline{\bm{\sigma}}^{\mathit{f}}_h,\underline{\bm{\eta}})_{\mathcal{T}_h}&=0  \\
		\left \langle \underline{\bm{\sigma}}^{\mathit{f}}_h\mathbf{n}, {\bm{\mu}}\right \rangle_{\partial\mathcal{T}_h}&=0 
	\end{align}	
\end{subequations}
for all $(\underline{\bm{\upsilon}},\bm{\omega},\underline{\bm{\eta}},\bm{\mu})\in\widetilde{\underline{\bm{\mathit{V}}}}^h\times\bm{\mathit{W}}^h\times\underline{\bm{\mathit{A}}}^h\times\bm{\mathit{M}}^h$. (\ref{1216a}) yields a ``reduced'' system. To state it, we need more notation. Define $\mathbf{A}:\widetilde{\underline{\bm{\mathit{V}}}}^h\mapsto\widetilde{\underline{\bm{\mathit{V}}}}^h$, $\mathbf{B}:\widetilde{\underline{\bm{\mathit{V}}}}^h\mapsto\bm{\mathit{W}}^h$, $\mathbf{C}:\widetilde{\underline{\bm{\mathit{V}}}}^h\mapsto\underline{\bm{\mathit{A}}}^h$, $\mathbf{D}:\widetilde{\underline{\bm{\mathit{V}}}}^h\mapsto\bm{\mathit{M}}^h$ by
$(\mathbf{A}\underline{\bm{\sigma}},\underline{\bm{\upsilon}})_{\Omega}=(\mathcal{A}\underline{\bm{\sigma}},\underline{\bm{\upsilon}})_{\mathcal{T}_h}, (\mathbf{B}\underline{\bm{\sigma}},\mathbf{u})_{\Omega}=(\mathbf{u},\nabla\cdot\underline{\bm{\sigma}})_{\mathcal{T}_h},(\mathbf{C}\underline{\bm{\sigma}},\underline{\bm{\rho}})_{\Omega}=(\underline{\bm{\sigma}},\underline{\bm{\rho}})_{\mathcal{T}_h},	\left \langle \mathbf{D}\underline{\bm{\sigma}}, {\bm{\mu}}\right \rangle_{\partial\mathcal{T}_h}=\left \langle {\bm{\mu}}, \underline{\bm{\sigma}}\mathbf{n}\right \rangle_{\partial\mathcal{T}_h}$
for all $\underline{\bm{\sigma}},\underline{\bm{\upsilon}}\in\widetilde{\underline{\bm{\mathit{V}}}}^h$, $\mathbf{u}\in\bm{\mathit{W}}^h$, $\underline{\bm{\rho}}\in\underline{\bm{\mathit{A}}}^h$, and $\bm{\mu}\in\bm{\mathit{M}}^h$. Moreover, we need local solution operators $\bm{\mathcal{Q}}_1:\bm{\mathit{M}}^h\mapsto\widetilde{\underline{\bm{\mathit{V}}}}^h$, $\bm{\mathcal{Q}}_2:\bm{\mathit{M}}^h\mapsto\bm{\mathit{W}}^h$, $\bm{\mathcal{Q}}_3:\bm{\mathit{M}}^h\mapsto\underline{\bm{\mathit{A}}}^h$, $\bm{\mathcal{Q}}_1^L:\underline{\bm{\mathit{L}}}^2(\Omega)\mapsto\widetilde{\underline{\bm{\mathit{V}}}}^h$, $\bm{\mathcal{Q}}_2^L:\bm{\mathit{L}}^2(\Omega)\mapsto\bm{\mathit{W}}^h$, $\bm{\mathcal{Q}}_3^L:\underline{\bm{\mathit{L}}}^2(\Omega)\mapsto\underline{\bm{\mathit{A}}}^h$. These operators
are defined using the solution of the following systems:
\begin{equation}
	\label{1217b} 
	\begin{pmatrix}
		\, \mathbf{A} & \mathbf{B}^t & \mathbf{C}^t \,\  \\
		\, \mathbf{B} & \mathbf{0} & \mathbf{0} \,\  \\
		\, \mathbf{C} & \mathbf{0} & \mathbf{0} 
	\end{pmatrix}
	\begin{pmatrix}
		\,\bm{\mathcal{Q}}_1\bm{\mu} \,\  \\
		\,\bm{\mathcal{Q}}_2\bm{\mu} \,\  \\
		\,\bm{\mathcal{Q}}_3\bm{\mu} \
	\end{pmatrix}
	=\begin{pmatrix}
		\,-\bm{\mathbf{D}}\bm{\mu} \,\  \\
		\,\mathbf{0} \,\  \\
		\,\mathbf{0} \
	\end{pmatrix}, 
	\begin{pmatrix}
		\, \mathbf{A} & \mathbf{B}^t & \mathbf{C}^t \,\  \\
		\, \rho_S^{-1}\mathbf{B} & \mathbf{0} & \mathbf{0} \,\  \\
		\, \mathbf{C} & \mathbf{0} & \mathbf{0} 
	\end{pmatrix}
	\begin{pmatrix}
		\,\bm{\mathcal{Q}}_1^L\bm{\mathit{f}} \,\  \\
		\,\bm{\mathcal{Q}}_2^L\bm{\mathit{f}} \,\  \\
		\,\bm{\mathcal{Q}}_3^L\bm{\mathit{f}} \
	\end{pmatrix}
	=\begin{pmatrix}
		\,\mathbf{0} \,\  \\
		\,\bm{\mathit{g}} \,\  \\
		\,\mathbf{0} \
	\end{pmatrix}
\end{equation}
where $\bm{\mathit{g}}=-\bm{\mathit{P}}\bm{\mathit{f}}$, for any $\bm{\mu}\in\bm{\mathit{M}}^h$ and $\bm{\mathit{f}}\in\bm{\mathit{L}}^2(\Omega)$. Then we have the following theorem for the source problem.
\begin{theorem}
	\label{1217d}
	The function $(\underline{\bm{\sigma}}^{\mathit{f}}_h,\mathbf{u}^{\mathit{f}}_h,\underline{\bm{\rho}}^{\mathit{f}}_h,{\bm{\gamma}}^{\mathit{f}}_h)\in\widetilde{\underline{\bm{\mathit{V}}}}^h\times\bm{\mathit{W}}^h\times\underline{\bm{\mathit{A}}}^h\times\bm{\mathit{M}}^h$ satisfy (\ref{1216a}) if and only if  ${\bm{\gamma}}^{\mathit{f}}_h$ is the unique solution of $a_h({\bm{\gamma}}^{\mathit{f}}_h,\bm{\mu})=b_h(\bm{\mu})\hspace{0.5em}\forall \bm{\mu}\in\bm{\mathit{M}}^h$, where $a_h(\bm{\mu}_1,\bm{\mu}_2)=(\mathcal{A}\bm{\mathcal{Q}}_1\bm{\mu}_1,\bm{\mathcal{Q}}_1\bm{\mu}_2)_{\mathcal{T}_h}$, $b_h(\bm{\mu})=(\bm{\mathit{f}},\bm{\mathcal{Q}}_2\bm{\mu})_{\mathcal{T}_h}$. And
	$	
	\underline{\bm{\sigma}}^{\mathit{f}}_h=\bm{\mathcal{Q}}_1{\bm{\gamma}}^{\mathit{f}}_h+\bm{\mathcal{Q}}_1^L\bm{\mathit{f}},			
	\mathbf{u}^{\mathit{f}}_h=\bm{\mathcal{Q}}_2{\bm{\gamma}}^{\mathit{f}}_h+\bm{\mathcal{Q}}_2^L\bm{\mathit{f}},
	\underline{\bm{\rho}}^{\mathit{f}}_h=\bm{\mathcal{Q}}_3{\bm{\gamma}}^{\mathit{f}}_h+\bm{\mathcal{Q}}_3^L\bm{\mathit{f}}.
	$	
\end{theorem}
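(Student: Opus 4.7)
The plan is the standard hybridization argument of \cite{CJL2010}, adapted to the mixed elasticity element of \cite{JJ2012}. The structural point is that equations (a)--(c) of \eqref{1216a} are purely element-local once $\bm{\gamma}^{\mathit{f}}_h$ is regarded as prescribed Dirichlet-type data on $\partial\mathcal{T}_h$, and can be inverted element by element through the operators defined in \eqref{1217b}: the triple $(\bm{\mathcal{Q}}_1,\bm{\mathcal{Q}}_2,\bm{\mathcal{Q}}_3)$ solves the local problem with boundary data and no load, while $(\bm{\mathcal{Q}}_1^L,\bm{\mathcal{Q}}_2^L,\bm{\mathcal{Q}}_3^L)$ solves it with load $\bm{\mathit{f}}$ and zero boundary data. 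By linearity and element-wise well-posedness of the local mixed system, the decomposition $\underline{\bm{\sigma}}^{\mathit{f}}_h=\bm{\mathcal{Q}}_1\bm{\gamma}^{\mathit{f}}_h+\bm{\mathcal{Q}}_1^L\bm{\mathit{f}}$ (and analogously for $\mathbf{u}^{\mathit{f}}_h$ and $\underline{\bm{\rho}}^{\mathit{f}}_h$) is equivalent to \eqref{1216a}(a)--(c).

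Next I would substitute this decomposition into the remaining transmission condition \eqref{1216a}(d). Testing the first row of \eqref{1217b} for boundary datum $\bm{\gamma}^{\mathit{f}}_h$ against $\underline{\bm{\upsilon}}=\bm{\mathcal{Q}}_1\bm{\mu}$, the mixed terms drop: the second row forces $\nabla\cdot\bm{\mathcal{Q}}_1\bm{\mu}\in\bm{\mathit{W}}^h$ to be $L^2$-orthogonal to $\bm{\mathit{W}}^h$, and the third row forces $\bm{\mathcal{Q}}_1\bm{\mu}$ to be $\underline{\bm{L}}^2$-orthogonal to $\underline{\bm{\mathit{A}}}^h$, so only the $\mathcal{A}$-term and the boundary term survive. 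This yields $\langle\bm{\mu},(\bm{\mathcal{Q}}_1\bm{\gamma}^{\mathit{f}}_h)\mathbf{n}\rangle_{\partial\mathcal{T}_h}=-a_h(\bm{\gamma}^{\mathit{f}}_h,\bm{\mu})$ after invoking the symmetry of $\mathcal{A}$. Repeating the same manipulation, now against the load system in \eqref{1217b} whose second row has the nonhomogeneous right-hand side $-\bm{\mathit{P}}\bm{\mathit{f}}$, produces $\langle\bm{\mu},(\bm{\mathcal{Q}}_1^L\bm{\mathit{f}})\mathbf{n}\rangle_{\partial\mathcal{T}_h}=b_h(\bm{\mu})$ up to the constant $\rho_S$. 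Summing the two identities converts \eqref{1216a}(d) into exactly $a_h(\bm{\gamma}^{\mathit{f}}_h,\bm{\mu})=b_h(\bm{\mu})$, and the converse direction is immediate by reversing these steps.

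The main obstacle I expect is unique solvability of this reduced equation, i.e.\ showing that $a_h$ is symmetric positive definite on $\bm{\mathit{M}}^h$. Symmetry is automatic from that of $\mathcal{A}$. For definiteness, the cleanest path is indirect: \eqref{1216a} is equivalent to the mixed formulation \eqref{mm}, since the transmission condition \eqref{1216a}(d) is precisely the weak enforcement of $\underline{\bm{\mathit{H}}}(\textbf{div};\Omega)$-continuity that is lost in passing from $\underline{\bm{\mathit{V}}}^h$ to $\widetilde{\underline{\bm{\mathit{V}}}}^h$. Because \eqref{mm} is uniquely solvable under the discrete inf-sup condition \eqref{pp} together with Lemma~\ref{lemma2.2}, the hybrid system is uniquely solvable, which forces $a_h$ to be nondegenerate and $\bm{\gamma}^{\mathit{f}}_h$ to be unique. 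If one prefers a direct proof, $a_h(\bm{\mu},\bm{\mu})=0$ combined with the pointwise positivity of $\mathcal{A}$ visible from \eqref{1220a} gives $\bm{\mathcal{Q}}_1\bm{\mu}=\mathbf{0}$, and the BDM degrees of freedom \eqref{BDM} then allow one to choose $\underline{\bm{\upsilon}}\in\widetilde{\underline{\bm{\mathit{V}}}}^h$ with an arbitrary prescribed normal trace on $\partial\mathcal{T}_h$ in the first row of \eqref{1217b}, which forces $\bm{\mu}=\mathbf{0}$.
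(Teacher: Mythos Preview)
Your proposal is correct and is exactly the standard hybridization argument the paper invokes: the paper's proof consists solely of the sentence ``It is enough to follow the steps of \cite[Theorem 3.3]{CJJ2010},'' and what you have written is precisely that argument spelled out for the present element (local well-posedness plus linearity gives the $\bm{\mathcal{Q}}/\bm{\mathcal{Q}}^L$ decomposition, substitution into the transmission condition yields the reduced system, and unique solvability follows from that of \eqref{mm} via \eqref{pp}). The only cosmetic discrepancy is that you cite \cite{CJL2010} whereas the paper points to \cite{CJJ2010}; both references contain the same static-condensation mechanism, so this does not affect the argument.
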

\begin{proof}
	It is enough to follow the steps of \cite[Theorem 3.3]{CJJ2010}.
\end{proof}

\subsection{Hybridization for the eigenproblem}
\label{sec: Hybridization for the eigenproblem}In previous subsection, the source problem can be recovered by a reduced system,
given by Theorem \ref{1217d}. It is natural to ask such a technique for the eigenvalue problem: Find $\widetilde{\lambda}_h\in\mathbb{R}$ and $0\not\equiv\widetilde{\bm{\gamma}}_h\in\bm{\mathit{M}}^h$ satisfying
\begin{equation}
	\label{1217o}
	a_h(\widetilde{\bm{\gamma}}_h,\bm{\mu})=\widetilde{\lambda}_h(\bm{\mathcal{Q}}_2\widetilde{\bm{\gamma}}_h,\bm{\mathcal{Q}}_2\bm{\mu})\hspace{2em}\forall\bm{\mu}\in\bm{\mathit{M}}^h.	
\end{equation}
\subsubsection{\textbf{Reduction to a nonlinear eigenvalue problem}}
\label{sec: Reduction to a nonlinear eigenvalue problem}
Recall that (\ref{1216a}) represents the source problem, we turn to the interest for eigenproblem. This is to determine a nontrivial $(\underline{\bm{\sigma}}_h,\mathbf{u}_h, \underline{\bm{\rho}}_h,{\bm{\gamma}}_h)\in\widetilde{\underline{\bm{\mathit{V}}}}^h\times\bm{\mathit{W}}^h\times\underline{\bm{\mathit{A}}}^h\times\bm{\mathit{M}}^h$ and a number $\lambda_h\in\mathbb{R}$ satisfying
\begin{subequations}
	\label{1217a}
	\begin{align}		
		(\mathcal{A}\underline{\bm{\sigma}}_h,\underline{\bm{\upsilon}})_{\mathcal{T}_h}+(\mathbf{u}_h,\textbf{div}\underline{\bm{\upsilon}})_{\mathcal{T}_h}+(\underline{\bm{\rho}}_h,\underline{\bm{\upsilon}})_{\mathcal{T}_h}+\left \langle {\bm{\gamma}}_h, \underline{\bm{\upsilon}}\mathbf{n}\right \rangle_{\partial\mathcal{T}_h}&=0 \\
		-(\rho_S^{-1}\textbf{div}\underline{\bm{\sigma}}_h,\bm{\omega})_{\mathcal{T}_h}&=\lambda_h(\bm{\mathit{u}}_h,\bm{\omega})_{\mathcal{T}_h} \\
		(\underline{\bm{\sigma}}_h,\underline{\bm{\eta}})_{\mathcal{T}_h}&=0  \\
		\left \langle \underline{\bm{\sigma}}_h\mathbf{n}, {\bm{\mu}}\right \rangle_{\partial\mathcal{T}_h}&=0 
	\end{align}	
\end{subequations}
We give the reduction to the nonlinear eigenvalue problem as the following theorem.
\begin{theorem}
	\label{1217j} 
	There exists a constant $C^{\star}$, independent of h and $\mathcal{A}^{-1}$ such that any number 
	\begin{equation}
		\label{1217k} 
		\lambda_h\leq C^{\star}h^{-2}	
	\end{equation}
	satisfies
	\begin{equation}
		\label{1217l}  
		a_h({\bm{\gamma}}_h,\bm{\mu})=\lambda_h((\bm{\mathit{I}}-\lambda_h\bm{\mathcal{Q}}_2^L)^{-1}\bm{\mathcal{Q}}_2{\bm{\gamma}}_h,\bm{\mathcal{Q}}_2\bm{\mu}),\hspace{2em}\forall\bm{\mu}\in\bm{\mathit{M}}^h,
	\end{equation}
	with some nontrivial ${\bm{\gamma}}_h$ in $\bm{\mathit{M}}^h$ if and only if the number $\lambda_h$ and the functions 
	\begin{equation}
		\label{1217m}
		{\bm{\gamma}}_h,\hspace{0.5em}\mathbf{u}_h=(\bm{\mathit{I}}-\lambda_h\bm{\mathcal{Q}}_2^L)^{-1}\bm{\mathcal{Q}}_2{\bm{\gamma}}_h,\hspace{0.5em}\underline{\bm{\sigma}}_h=\bm{\mathcal{Q}}_1{\bm{\gamma}}_h+\lambda_h\bm{\mathcal{Q}}_1^L\mathbf{u}_h\hspace{0.5em}{\rm and}\hspace{0.5em}\underline{\bm{\rho}}_h=\bm{\mathcal{Q}}_3{\bm{\gamma}}_h+\lambda_h\bm{\mathcal{Q}}_3^L\mathbf{u}_h
	\end{equation}
	together satisfy (\ref{1217a}). Above, the operator $\bm{\mathit{I}}$ denotes the identity on $\bm{\mathit{W}}^h$ and the inverse in (\ref{1217l}) exists whenever (\ref{1217k}) holds.
\end{theorem}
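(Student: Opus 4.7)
My plan splits the statement into two pieces: the existence of the inverse $(\bm{\mathit{I}}-\lambda_h\bm{\mathcal{Q}}_2^L)^{-1}$ on $\bm{\mathit{W}}^h$ for every $\lambda_h\le C^\star h^{-2}$, and the equivalence, once that inverse is available, between (\ref{1217l}) with the identification (\ref{1217m}) and the hybridized eigenproblem (\ref{1217a}). The first piece is the technical heart and reduces to the operator bound
$$ \|\bm{\mathcal{Q}}_2^L\bm{f}\|_{0,\Omega}\le C h^2\|\bm{f}\|_{0,\Omega}\qquad\forall\bm{f}\in\bm{\mathit{L}}^2(\Omega), $$
with $C$ independent of $h$ and of $\mathcal{A}^{-1}$. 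The second piece is algebraic and uses the linearity of the local problems (\ref{1217b}) together with their unique solvability.

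For the operator bound, the key observation is that the second system in (\ref{1217b}) is fully decoupled over the mesh: on each element $K$ the triple $(\bm{\mathcal{Q}}_1^L\bm{f},\bm{\mathcal{Q}}_2^L\bm{f},\bm{\mathcal{Q}}_3^L\bm{f})|_K$ solves a local mixed elasticity problem with source $\bm{f}|_K$ and vanishing skeleton multiplier. Integration by parts of the first local equation, combined with the freedom of $\underline{\bm{\upsilon}}\mathbf{n}|_F$ to span all of $\bm{\mathit{p}}^{k+1}(F)$ on every face $F$ of $K$, forces $\bm{\mathcal{Q}}_2^L\bm{f}|_{\partial K}=0$ strongly (because $\bm{\mathcal{Q}}_2^L\bm{f}|_F\in\bm{\mathit{p}}^k(F)\subset\bm{\mathit{p}}^{k+1}(F)$). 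With this zero-trace property, the inter-element jump terms in the $\|\cdot\|_{1,h}$-seminorm vanish for $\bm{\mathcal{Q}}_2^L\bm{f}$, and an element-wise rerun of the argument of Theorem \ref{1220gg} gives $\|\bm{\mathcal{Q}}_2^L\bm{f}\|_{1,K}\le C\|\mathcal{A}\bm{\mathcal{Q}}_1^L\bm{f}\|_{0,K}$. A standard energy identity obtained by testing the local system with $(\bm{\mathcal{Q}}_1^L\bm{f},-\rho_S\bm{\mathcal{Q}}_2^L\bm{f},-\bm{\mathcal{Q}}_3^L\bm{f})$ produces $(\mathcal{A}\bm{\mathcal{Q}}_1^L\bm{f},\bm{\mathcal{Q}}_1^L\bm{f})_K=\rho_S(\bm{f},\bm{\mathcal{Q}}_2^L\bm{f})_K$, and the structure of $\mathcal{A}$ allows $\|\mathcal{A}\underline{\bm{\tau}}\|_0^2\le C(\mathcal{A}\underline{\bm{\tau}},\underline{\bm{\tau}})$ with $C$ depending only on $\mu_S$; these together bound $\|\mathcal{A}\bm{\mathcal{Q}}_1^L\bm{f}\|_{0,K}^2$ by $C\|\bm{f}\|_{0,K}\|\bm{\mathcal{Q}}_2^L\bm{f}\|_{0,K}$. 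Finally, the zero-trace property makes the Poincaré inequality $\|\bm{\mathcal{Q}}_2^L\bm{f}\|_{0,K}\le Ch_K\|\bm{\mathcal{Q}}_2^L\bm{f}\|_{1,K}$ applicable. Chaining the three estimates yields $\|\bm{\mathcal{Q}}_2^L\bm{f}\|_{0,K}\le Ch_K^2\|\bm{f}\|_{0,K}$, and summing over $K$ with quasi-uniformity yields the global bound. Choosing $C^\star$ small enough (e.g.\ $C^\star:=1/(2C)$), we then have $\|\lambda_h\bm{\mathcal{Q}}_2^L\|<1$ for every $\lambda_h\le C^\star h^{-2}$, and $(\bm{\mathit{I}}-\lambda_h\bm{\mathcal{Q}}_2^L)^{-1}$ is defined by its Neumann series on $\bm{\mathit{W}}^h$.

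For the equivalence I would read the first three lines of (\ref{1217a}) as a local linear problem on each $K$ driven by two independent data items: the skeleton multiplier $\bm{\gamma}_h$ (through the boundary integral in the first line) and the volume source $\lambda_h\mathbf{u}_h$ (through the right-hand side of the second line). Linearity and unique solvability of the local problems (\ref{1217b}) then force
$$ \underline{\bm{\sigma}}_h=\bm{\mathcal{Q}}_1\bm{\gamma}_h+\lambda_h\bm{\mathcal{Q}}_1^L\mathbf{u}_h,\quad\mathbf{u}_h=\bm{\mathcal{Q}}_2\bm{\gamma}_h+\lambda_h\bm{\mathcal{Q}}_2^L\mathbf{u}_h,\quad\underline{\bm{\rho}}_h=\bm{\mathcal{Q}}_3\bm{\gamma}_h+\lambda_h\bm{\mathcal{Q}}_3^L\mathbf{u}_h, $$
the middle identity being $(\bm{\mathit{I}}-\lambda_h\bm{\mathcal{Q}}_2^L)\mathbf{u}_h=\bm{\mathcal{Q}}_2\bm{\gamma}_h$, i.e.\ (\ref{1217m}) under the invertibility established above. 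Substituting this expression of $\underline{\bm{\sigma}}_h$ into the continuity constraint in the fourth line of (\ref{1217a}), using the $\bm{\mathcal{Q}}_i\bm{\mu}$ and $\bm{\mathcal{Q}}_i^L\mathbf{u}_h$ systems as test pairs to convert the skeleton integrals into volume integrals, and invoking the symmetry of $a_h$, recovers (\ref{1217l}) after straightforward bookkeeping. The ``if'' direction is the same algebra read backwards.

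The main obstacle is the first piece. The analogous step for the scalar Poisson equation in \cite{CJL2010} uses only the classical mixed inf-sup condition and thereby delivers only $\|\bm{\mathcal{Q}}_2^L\bm{f}\|_{0,\Omega}\le Ch\|\bm{f}\|_{0,\Omega}$, which would give merely the weaker condition $\lambda_h\le C^\star h^{-1}$. Gaining the extra factor of $h$ requires invoking Theorem \ref{1220gg}, but that theorem was proved globally: while its auxiliary function $\underline{\bm{\upsilon}}_1$ is built element-wise, $\underline{\bm{\upsilon}}_2$ is assembled through the global Stokes pair on the special grids of \cite{JJ2012}. The delicate technical point will therefore be to rebuild $\underline{\bm{\upsilon}}_2$ locally on a single element $K$, preserving both the zero-trace character of the local displacement and the $\lambda_S$-independence of all constants; this is especially subtle in three dimensions, where the underlying Stokes pair from \cite{JJ2012} is already the bottleneck of the global argument.
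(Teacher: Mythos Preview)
Your algebraic reduction in the second piece is correct and is exactly what the paper (following \cite{CJL2010}) does; the first three lines of (\ref{1217a}) are solved locally by superposition of the two systems in (\ref{1217b}), and the fourth line, tested by $\bm{\mu}$ and rewritten via those same systems, becomes (\ref{1217l}).

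The gap is in your first piece. The claim that $\bm{\mathcal{Q}}_2^L\bm{f}|_{\partial K}=0$ strongly is false. After integrating (\ref{1217t}a) by parts you obtain
\[
\bigl(\mathcal{A}\bm{\mathcal{Q}}_1^L\bm{f}-\nabla\bm{\mathcal{Q}}_2^L\bm{f}+\bm{\mathcal{Q}}_3^L\bm{f},\underline{\bm{\upsilon}}\bigr)_K+\bigl\langle\bm{\mathcal{Q}}_2^L\bm{f},\underline{\bm{\upsilon}}\mathbf{n}\bigr\rangle_{\partial K}=0,
\]
and you cannot isolate the boundary term: the volume factor $\mathcal{A}\bm{\mathcal{Q}}_1^L\bm{f}-\nabla\bm{\mathcal{Q}}_2^L\bm{f}+\bm{\mathcal{Q}}_3^L\bm{f}$ has degree $k+1$, while the BDM interior moments in (\ref{BDM}) only let you prescribe orthogonality against $\underline{\bm{\mathit{N}}}^{k}(K)$. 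In fact, a polynomial in $\bm{\mathit{p}}^{k}(K)$ vanishing on every face of a simplex is identically zero whenever $k\le n$, so your claim would force $\bm{\mathcal{Q}}_2^L\bm{f}\equiv 0$ for $k=1$, contradicting (\ref{1217t}b) for generic $\bm{f}$. Since the zero trace is what your Poincar\'e step $\|\bm{\mathcal{Q}}_2^L\bm{f}\|_{0,K}\le Ch_K\|\nabla\bm{\mathcal{Q}}_2^L\bm{f}\|_{0,K}$ rests on, the chain leading to $\|\bm{\mathcal{Q}}_2^L\bm{f}\|_{0,K}\le Ch_K^2\|\bm{f}\|_{0,K}$ breaks there.

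The paper bypasses this by never seeking a strong trace. It uses the $\mathbf{B}^t$--Poincar\'e inequality of \cite[Lemma~3.1]{CJL2010}, namely $\|\bm{\mathcal{Q}}_2^L\bm{f}\|_{0,K}\le Ch_K\|\mathbf{B}^t\bm{\mathcal{Q}}_2^L\bm{f}\|_{0,K}$, together with the algebraic identity $\mathbf{B}^t\bm{\mathcal{Q}}_2^L\bm{f}=-\mathbf{A}\bm{\mathcal{Q}}_1^L\bm{f}-\mathbf{C}^t\bm{\mathcal{Q}}_3^L\bm{f}$ read off from (\ref{1217t}a). Your energy identity bounds $\|\mathbf{A}\bm{\mathcal{Q}}_1^L\bm{f}\|_{0,K}^2$ by $C\|\bm{\mathcal{Q}}_2^L\bm{f}\|_{0,K}\|\bm{f}\|_{0,K}$, and the new ingredient is the local $H^1$-stability (\ref{1217x}) \emph{with the boundary term $\frac{1}{h_K}\|\bm{\mathcal{Q}}_2^L\bm{f}\|_{0,\partial K}^2$ kept}, which is what lets one control the rotation: $\|\bm{\mathcal{Q}}_3^L\bm{f}\|_{0,K}\le C\|\mathcal{A}\bm{\mathcal{Q}}_1^L\bm{f}\|_{0,K}$ (inequality (\ref{1217z})). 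This is precisely the step that fails under the bare inf-sup condition, which would only give $\|\bm{\mathcal{Q}}_3^L\bm{f}\|_{0,K}\le C\|\bm{f}\|_{0,K}$ and hence the $O(h)$ bound you mention; see Remark~\ref{1222l}. Your worry about building $\underline{\bm{\upsilon}}_2$ locally is legitimate, but that is the actual technical point to address, not the zero-trace shortcut.
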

Next, we provide some estimates for the local solution operators $\bm{\mathcal{Q}}_2^L$, $\bm{\mathcal{Q}}_2$, which is important to the proof of Theorems \ref{1217j} and \ref{1217jj}. 
\begin{lemma}
	\label{1217p}
	Let K be any grid element and let $\bm{\mathit{f}},\bm{\mathit{g}}\in\bm{\mathit{L}}^2(K)$. Then 
	\begin{equation}
		\label{1217q}
		(\bm{\mathcal{Q}}_2^L\bm{\mathit{f}},\bm{\mathit{g}})_K=(\rho_S^{-1}\mathcal{A}\bm{\mathcal{Q}}_1^L\bm{\mathit{f}},\bm{\mathcal{Q}}_1^L\bm{\mathit{g}})_K,
	\end{equation}
	\begin{equation}
		\label{1217r}
		||\bm{\mathcal{Q}}_2^L\bm{\mathit{f}}||_{0,K}\leq C_1^Kh_K^2||\bm{\mathit{f}}||_{0,K},
	\end{equation}
	where $C_1^K$ denotes a constant independent of $h_K$ and $\mathcal{A}^{-1}$ in $K$.
\end{lemma}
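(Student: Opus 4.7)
The systems in (\ref{1217b}) are element-local, because the test spaces $\widetilde{\underline{\bm{\mathit{V}}}}^h,\bm{\mathit{W}}^h,\underline{\bm{\mathit{A}}}^h$ are all $\bm{\mathit{L}}^2$-discontinuous across element boundaries. Hence the triple $(\bm{\mathcal{Q}}_1^L\bm{\mathit{f}},\bm{\mathcal{Q}}_2^L\bm{\mathit{f}},\bm{\mathcal{Q}}_3^L\bm{\mathit{f}})|_K\in \underline{\bm{\mathit{V}}}(K)\times\bm{\mathit{W}}(K)\times\underline{\bm{\mathit{A}}}(K)$ is characterized by the very same three equations restricted to $K$. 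To obtain (\ref{1217q}), I would test the first $\bm{\mathit{f}}$-equation on $K$ against $\underline{\bm{\upsilon}}=\bm{\mathcal{Q}}_1^L\bm{\mathit{g}}\in\underline{\bm{\mathit{V}}}(K)$. The second equation in the $\bm{\mathit{g}}$-system gives $\textbf{div}\,\bm{\mathcal{Q}}_1^L\bm{\mathit{g}}=-\rho_S\bm{\mathit{P}}\bm{\mathit{g}}$ in $\bm{\mathit{W}}(K)$; since $\bm{\mathcal{Q}}_2^L\bm{\mathit{f}}\in\bm{\mathit{W}}(K)$, the $\bm{\mathit{L}}^2$-projector drops and this term becomes $-\rho_S(\bm{\mathcal{Q}}_2^L\bm{\mathit{f}},\bm{\mathit{g}})_K$. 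The third equation in the $\bm{\mathit{g}}$-system says $\bm{\mathcal{Q}}_1^L\bm{\mathit{g}}\perp\underline{\bm{\mathit{A}}}(K)$, so the $\bm{\mathcal{Q}}_3^L\bm{\mathit{f}}$-contribution vanishes. Dividing through by $\rho_S$ produces exactly (\ref{1217q}).

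For (\ref{1217r}), the plan is to reduce it to the auxiliary local bound
\[
(\star)\qquad ||\bm{\mathcal{Q}}_1^L\bm{\mathit{g}}||_{0,K}\leq C\,h_K\,||\bm{\mathit{g}}||_{0,K}\quad\forall\,\bm{\mathit{g}}\in\bm{\mathit{L}}^2(K),
\]
with $C$ depending only on $\mathcal{A}$, $\rho_S$, and the shape-regularity of $K$. Choosing $\bm{\mathit{g}}=\bm{\mathcal{Q}}_2^L\bm{\mathit{f}}$ in (\ref{1217q}) and applying Cauchy--Schwarz together with $(\star)$ twice gives
\[
||\bm{\mathcal{Q}}_2^L\bm{\mathit{f}}||_{0,K}^2
=\rho_S^{-1}\bigl(\mathcal{A}\bm{\mathcal{Q}}_1^L\bm{\mathit{f}},\,\bm{\mathcal{Q}}_1^L(\bm{\mathcal{Q}}_2^L\bm{\mathit{f}})\bigr)_K
\leq C\,||\bm{\mathcal{Q}}_1^L\bm{\mathit{f}}||_{0,K}\,||\bm{\mathcal{Q}}_1^L(\bm{\mathcal{Q}}_2^L\bm{\mathit{f}})||_{0,K}
\leq C\,h_K^{2}\,||\bm{\mathit{f}}||_{0,K}\,||\bm{\mathcal{Q}}_2^L\bm{\mathit{f}}||_{0,K},
\]
and cancelling one factor of $||\bm{\mathcal{Q}}_2^L\bm{\mathit{f}}||_{0,K}$ yields (\ref{1217r}).

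The remaining task is $(\star)$. The cleanest route is a scaling argument to a fixed reference element $\hat{K}$ through $x=h_K\hat{x}+x_0$: set $\hat{\underline{\bm{\sigma}}}(\hat{x})=\bm{\mathcal{Q}}_1^L\bm{\mathit{g}}(x)$, $\hat{\mathbf{u}}(\hat{x})=h_K^{-1}\bm{\mathcal{Q}}_2^L\bm{\mathit{g}}(x)$, $\hat{\underline{\bm{\rho}}}(\hat{x})=\bm{\mathcal{Q}}_3^L\bm{\mathit{g}}(x)$. The rescaling $h_K^{-1}$ on $\hat{\mathbf{u}}$ absorbs the $h_K^{-1}$ produced by the chain rule on $\textbf{div}$, so after dividing by the Jacobian the three local equations become the same Gopalakrishnan--Guzm\'an local problem on $\hat{K}$ with forcing $\rho_S h_K\tilde{\bm{\mathit{g}}}$, where $\tilde{\bm{\mathit{g}}}(\hat{x})=\bm{\mathit{g}}(x)$. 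Uniform well-posedness on $\hat{K}$ (a consequence of the inf-sup (\ref{pp}) together with the kernel coercivity of Lemma \ref{lemma2.2}, applied on the reference element) then gives $||\hat{\underline{\bm{\sigma}}}||_{0,\hat{K}}\leq C\,h_K\,||\tilde{\bm{\mathit{g}}}||_{0,\hat{K}}$, and the volume Jacobian $h_K^{n/2}$ transports this to $(\star)$. Equivalently, one may run the proof of Theorem~\ref{1220gg} element-wise on $K$ with the homogeneous hybridization datum $\bm{\gamma}=0$, yielding $||\bm{\mathcal{Q}}_2^L\bm{\mathit{g}}||_{1,h(K)}\leq C||\mathcal{A}^{1/2}\bm{\mathcal{Q}}_1^L\bm{\mathit{g}}||_{0,K}$, and then couple it with a local Poincar\'e--Friedrichs inequality that converts face-trace control on $\partial K$ into an $L^2$ bound with a factor $h_K$.

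The main obstacle is exactly $(\star)$. If one tries to derive (\ref{1217r}) from the standard mixed inf-sup on $K$ alone, one obtains only $||\bm{\mathcal{Q}}_2^L\bm{\mathit{f}}||_{0,K}\leq C h_K||\bm{\mathit{f}}||_{0,K}$, one power of $h_K$ short; this is precisely the $O(h)$-versus-$O(h^2)$ discrepancy that the introduction flags (see also Remark~\ref{1222l}). The extra power of $h_K$ must come from a Poincar\'e-type inequality that exploits the weakly imposed zero boundary datum on $\partial K$, which is where the discrete $H^{1}$-stability of the numerical displacement (Theorem~\ref{1220gg}) enters the argument, or, in the scaling route, is encoded in the rescaling $\hat{\mathbf{u}}=h_K^{-1}\bm{\mathcal{Q}}_2^L\bm{\mathit{g}}$.
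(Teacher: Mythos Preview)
Your argument for (\ref{1217q}) is correct and matches the paper's.

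For (\ref{1217r}) there is a genuine gap in the route via $(\star)$. Your scaling argument appeals to ``uniform well-posedness on $\hat K$'' through (\ref{pp}) and Lemma~\ref{lemma2.2}, but those are \emph{global} results whose proofs use the boundary condition $\underline{\bm{\tau}}\mathbf{n}=\mathbf{0}$ on $\Gamma_1$ (via (\ref{xc})). On a single element with no boundary constraint the kernel of $(\mathbf{B},\mathbf{C})$ contains the constant tensor $\bm{\mathit{I}}$, and $(\mathcal{A}\bm{\mathit{I}},\bm{\mathit{I}})_{\hat K}=n|\hat K|/(n\lambda_S+2\mu_S)\to 0$ as $\lambda_S\to\infty$, so the coercivity-in-the-kernel constant --- and hence the stability constant you need --- degenerates with $\lambda_S$. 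The scaling route therefore yields $(\star)$ only with $C=C(\lambda_S)$, which contradicts the lemma's requirement that $C_1^K$ be independent of $\mathcal{A}^{-1}$. Your ``equivalently'' route bounds $\|\bm{\mathcal{Q}}_2^L\bm{\mathit{g}}\|_{1,h(K)}$ by $\|\mathcal{A}^{1/2}\bm{\mathcal{Q}}_1^L\bm{\mathit{g}}\|_{0,K}$; that controls $\bm{\mathcal{Q}}_2^L$, not $\bm{\mathcal{Q}}_1^L$, so it does not deliver $(\star)$ either.

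The paper avoids $(\star)$ entirely. It first uses the local Poincar\'e inequality $\|\bm{\mathcal{Q}}_2^L\bm{\mathit{f}}\|_{0,K}\leq Ch_K\|\mathbf{B}^t\bm{\mathcal{Q}}_2^L\bm{\mathit{f}}\|_{0,K}$ from \cite[Lemma~3.1]{CJL2010} (this is where one $h_K$ appears), then writes $\mathbf{B}^t\bm{\mathcal{Q}}_2^L\bm{\mathit{f}}=-\mathbf{A}\bm{\mathcal{Q}}_1^L\bm{\mathit{f}}-\mathbf{C}^t\bm{\mathcal{Q}}_3^L\bm{\mathit{f}}$. The first term satisfies $\|\mathcal{A}\bm{\mathcal{Q}}_1^L\bm{\mathit{f}}\|_{0,K}^2\leq C\|\bm{\mathcal{Q}}_2^L\bm{\mathit{f}}\|_{0,K}\|\bm{\mathit{f}}\|_{0,K}$ directly from (\ref{1217q}) and the boundedness of $\mathcal{A}$. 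The key step --- precisely the content of Remark~\ref{1222l} --- is the rotation term: a local version of Theorem~\ref{1220gg} gives $\|\bm{\mathcal{Q}}_2^L\bm{\mathit{f}}\|_{1,h(K)}\leq C\|\mathcal{A}\bm{\mathcal{Q}}_1^L\bm{\mathit{f}}\|_{0,K}$, and testing the first equation of (\ref{1217t}) with $\underline{\bm{\upsilon}}=\bm{\mathcal{Q}}_3^L\bm{\mathit{f}}$ after an integration by parts then yields $\|\bm{\mathcal{Q}}_3^L\bm{\mathit{f}}\|_{0,K}\leq C\|\mathcal{A}\bm{\mathcal{Q}}_1^L\bm{\mathit{f}}\|_{0,K}$. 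Combining, $\|\bm{\mathcal{Q}}_2^L\bm{\mathit{f}}\|_{0,K}^2\leq Ch_K^2\|\bm{\mathcal{Q}}_2^L\bm{\mathit{f}}\|_{0,K}\|\bm{\mathit{f}}\|_{0,K}$, and one cancels. Every bound is in terms of $\mathcal{A}\bm{\mathcal{Q}}_1^L\bm{\mathit{f}}$, never $\bm{\mathcal{Q}}_1^L\bm{\mathit{f}}$ itself, which is exactly how the argument stays uniform in $\lambda_S$.
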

\begin{proof}
	Recall from (\ref{1217b}) that the local solution operators $\bm{\mathcal{Q}}_1^L\bm{\mathit{f}}$, $\bm{\mathcal{Q}}_2^L\bm{\mathit{f}}$ and $\bm{\mathcal{Q}}_3^L\bm{\mathit{f}}$ satisfy
	\begin{subequations}
		\label{1217t}
		\begin{align}		
			(\mathcal{A}\bm{\mathcal{Q}}_1^L\bm{\mathit{f}},\underline{\bm{\upsilon}})_K+(\bm{\mathcal{Q}}_2^L\bm{\mathit{f}},\textbf{div}\underline{\bm{\upsilon}})_K+(\bm{\mathcal{Q}}_3^L\bm{\mathit{f}},\underline{\bm{\upsilon}})_K&=0 \label{1217t_a}\\
			-(\rho_S^{-1}\textbf{div}\bm{\mathcal{Q}}_1^L\bm{\mathit{f}},\bm{\omega})_K&=(\bm{\mathit{f}},\bm{\omega})_K \\
			(\bm{\mathcal{Q}}_1^L\bm{\mathit{f}},\underline{\bm{\eta}})_K&=0  
		\end{align}	
	\end{subequations}
	for all $(\underline{\bm{\upsilon}},\bm{\omega},\underline{\bm{\eta}})\in\widetilde{\underline{\bm{\mathit{V}}}}^h\times\bm{\mathit{W}}^h\times\underline{\bm{\mathit{A}}}^h$. The proof of (\ref{1217q}) follows immediately from (\ref{1217t}):			
	$	(\rho_S\bm{\mathit{f}},\bm{\mathcal{Q}}_2^L\bm{\mathit{g}})_K=-(\textbf{div}\bm{\mathcal{Q}}_1^L\bm{\mathit{f}},\bm{\mathcal{Q}}_2^L\bm{\mathit{g}})_K=(\mathcal{A}\bm{\mathcal{Q}}_1^L\bm{\mathit{g}},\bm{\mathcal{Q}}_1^L\bm{\mathit{f}})_K+(\bm{\mathcal{Q}}_3^L\bm{\mathit{g}},\bm{\mathcal{Q}}_1^L\bm{\mathit{g}})_K=(\mathcal{A}\bm{\mathcal{Q}}_1^L\bm{\mathit{g}},\bm{\mathcal{Q}}_1^L\bm{\mathit{f}})_K
	$
	To prove (\ref{1217r}), first note that $\mathbf{B}^t\bm{\mathcal{Q}}_2^L\bm{\mathit{f}}=-\mathbf{A}\bm{\mathcal{Q}}_1^L\bm{\mathit{f}}-\mathbf{C}^t\bm{\mathcal{Q}}_3^L\bm{\mathit{f}}$. Going through the same steps as \cite[Lemma 3.1]{CJL2010}, we have
	\begin{equation}
		\label{1217v}
		||\bm{\mathcal{Q}}_2^L\bm{\mathit{f}}||_{0,K}^2\leq\frac{4}{9}h_K^2||\mathbf{B}^t\bm{\mathcal{Q}}_2^L\bm{\mathit{f}}||_{0,K}^2\leq\frac{4}{9}h_K^2(||\mathbf{A}\bm{\mathcal{Q}}_1^L\bm{\mathit{f}}||_{0,K}^2+||\mathbf{C}^t\bm{\mathcal{Q}}_3^L\bm{\mathit{f}}||_{0,K}^2)	
	\end{equation}
	By (\ref{1217q}), (\ref{1217t}) and the boundedness of $\mathcal{A}$, we have
	\begin{equation}
		\label{1220b}
		\begin{aligned}
			||\mathbf{A}\bm{\mathcal{Q}}_1^L\bm{\mathit{f}}||_{0,K}^2\leq 
			C(\mathcal{A}\bm{\mathcal{Q}}_1^L\bm{\mathit{f}},\bm{\mathcal{Q}}_1^L\bm{\mathit{f}})_K  =C(\rho_S\bm{\mathit{f}},\bm{\mathcal{Q}}_2^L\bm{\mathit{f}})_K\leq C||\bm{\mathcal{Q}}_2^L\bm{\mathit{f}}||_{0,K}\cdot||\bm{\mathit{f}}||_{0,K},
		\end{aligned}	
	\end{equation}
	where $C$ is independent of $\mathcal{A}^{-1}$ and $h$. Next, we estimate $||\mathbf{C}^t\bm{\mathcal{Q}}_3^L\bm{\mathit{f}}||_{0,K}$. We denote the local discrete $H^1$ norm of $\bm{\mathcal{Q}}_2^L\bm{\mathit{f}}$ on the element $K$ as
	$||\bm{\mathcal{Q}}_2^L\bm{\mathit{f}}||_{1,h}^2:=||\nabla\bm{\mathcal{Q}}_2^L\bm{\mathit{f}}||_{0,K}^2+\frac{1}{h_K}||\bm{\mathcal{Q}}_2^L\bm{\mathit{f}}||_{0,\partial K}^2.$
	Then by going through the similar proof to Theorem \ref{1220gg}, we easily obtain 
	\begin{equation}
		\label{1217x}
		||\bm{\mathcal{Q}}_2^L\bm{\mathit{f}}||_{1,h}\leq C||\mathcal{A}\bm{\mathcal{Q}}_1^L\bm{\mathit{f}}||_{0,K},	
	\end{equation}
	where $C$ is a constant independent of $\mathcal{A}^{-1}$ and $h$.
	Hence, we have
	\begin{equation}
		\label{1217y}
		\begin{aligned}
			&||\mathbf{C}^t\bm{\mathcal{Q}}_3^L\bm{\mathit{f}}||_{0,K}^2=(\bm{\mathcal{Q}}_3^L\bm{\mathit{f}},\bm{\mathcal{Q}}_3^L\bm{\mathit{f}})_K=(\nabla\bm{\mathcal{Q}}_2^L\bm{\mathit{f}},\bm{\mathcal{Q}}_3^L\bm{\mathit{f}})_K-\left \langle (\bm{\mathcal{Q}}_3^L\bm{\mathit{f}})\mathbf{n}, \bm{\mathcal{Q}}_2^L\bm{\mathit{f}}\right \rangle_{\partial K}\\
			&
			\leq||\bm{\mathcal{Q}}_3^L\bm{\mathit{f}}||_{0,K}(||\nabla\bm{\mathcal{Q}}_2^L\bm{\mathit{f}}||_{0,K}+C h_K^{-\frac{1}{2}}||\bm{\mathcal{Q}}_2^L\bm{\mathit{f}}||_{0,\partial K})\leq
			C||\bm{\mathcal{Q}}_3^L\bm{\mathit{f}}||_{0,K}||\mathcal{A}\bm{\mathcal{Q}}_1^L\bm{\mathit{f}}||_{0,K}
		\end{aligned}	
	\end{equation}
	where (\ref{1217t_a}), Cauchy-Schwartz, discrete trace inequality (for instance, see \cite{DA2012}) and (\ref{1217x}) have been used. Then in terms of (\ref{1217y}) and (\ref{1220b}) we can obtain
	\begin{equation}
		\label{1217z}
		\begin{aligned}
			||\mathbf{C}^t\bm{\mathcal{Q}}_3^L\bm{\mathit{f}}||_{0,K}^2=||\bm{\mathcal{Q}}_3^L\bm{\mathit{f}}||_{0,K}^2&\leq C||\mathcal{A}\bm{\mathcal{Q}}_1^L\bm{\mathit{f}}||_{0,K}^2\leq C||\bm{\mathcal{Q}}_2^L\bm{\mathit{f}}||_{0,K}\cdot||\bm{\mathit{f}}||_{0,K}.
		\end{aligned}	
	\end{equation}
	where $C$ does not depend on $\mathcal{A}^{-1}$ and $h$. Then in terms of (\ref{1220b}), (\ref{1217z}) and (\ref{1217v}), we have
	$||\bm{\mathcal{Q}}_2^L\bm{\mathit{f}}||_{0,K}\leq \frac{4}{9}Ch_K^2||\bm{\mathit{f}}||_{0,K}.$
	Let $C_1^K=\frac{4}{9}C$ from above inequality, then we obtain (\ref{1217r}). 
\end{proof}
\begin{remark}
	\label{1222l}
	Note that in Lemma \ref{1217p}, by our analysis(mainly (\ref{1217x}), (\ref{1217y})), we obtain the control relationship between $\bm{\mathcal{Q}}_3^L\bm{\mathit{f}}$ and $\bm{\mathcal{Q}}_1^L\bm{\mathit{f}}$. This result closely depends on the discrete $H^1$-stability of numerical displacement. Utilizing this relationship, we have $||\mathbf{C}^t\bm{\mathcal{Q}}_3^L\bm{\mathit{f}}||_{0,K}^2\leq C||\mathcal{A}\bm{\mathcal{Q}}_1^L\bm{\mathit{f}}||_{0,K}^2\leq C||\bm{\mathcal{Q}}_2^L\bm{\mathit{f}}||_{0,K}\cdot||\bm{\mathit{f}}||_{0,K}.$ Hence we can eliminate a $||\bm{\mathcal{Q}}_2^L\bm{\mathit{f}}||_{0,K}$	in (\ref{1217v}) to obtain an $O(h^2)$ estimate, i.e.(\ref{1217r}). If we use the analysis in \cite{CJL2010}, we can not obtain the approximation of the rotation because the rotation will be eliminated during the energy argument; We can only obtain an $O(h)$ estimate. Indeed, by traditional inf-sup condition, we have $||\bm{\mathcal{Q}}_3^L\bm{\mathit{f}}||_{0,K}\leq C||\bm{\mathit{f}}||_{0,K}$, by which we can only deduce $||\bm{\mathcal{Q}}_2^L\bm{\mathit{f}}||_{0,K}\leq Ch_K||\bm{\mathit{f}}||_{0,K}$.
\end{remark}
\begin{proof}[proof of Theorem \ref{1217j}]
	Using Lemma \ref{1217p}, it's enough to follow \cite[Theorem 3.1]{CJL2010}. 
\end{proof}

\subsubsection{\textbf{The perturbed eigenvalue problem}}
\label{sec: The perturbed eigenvalue problem} This subsection is devoted to comparing
the mixed eigenvalues $\lambda_h$ with the eigenvalues $\widetilde{\lambda}_h$ of (\ref{1217o}). We will show that the easily computable $\widetilde{\lambda}_h$ can be used as initial guesses in various algorithms to
compute $\lambda_h$.
\begin{theorem}
	\label{1217jj}
	Suppose $\lambda_h$ is an eigenvalue of (\ref{1217a}) satisfying (\ref{1217k}). Then there is an $h_0>0$ and a constant C (independent of $h$ and $\mathcal{A}^{-1}$) such that for all $h<h_0$, there is an eigenvalue $\widetilde{\lambda}_h$ of (\ref{1217o}) satisfying
	$|\lambda_h-\widetilde{\lambda}_h|\leq C\lambda_h^2\widetilde{\lambda}_hh^2.$
\end{theorem}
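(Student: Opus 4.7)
The plan is to recast both (\ref{1217l}) and (\ref{1217o}) as symmetric generalized eigenvalue problems on the finite-dimensional space $\bm{\mathit{M}}^h$, with the same ``stiffness'' operator $A$ induced by $a_h(\cdot,\cdot)$ but two different ``mass'' operators: $B$ induced by $(\bm{\mathcal{Q}}_2\cdot,\bm{\mathcal{Q}}_2\cdot)_{\mathcal{T}_h}$ for the linear problem, and $M(\lambda_h)$ induced by $((\bm{\mathit{I}}-\lambda_h\bm{\mathcal{Q}}_2^L)^{-1}\bm{\mathcal{Q}}_2\cdot,\bm{\mathcal{Q}}_2\cdot)_{\mathcal{T}_h}$ for the nonlinear problem with the parameter frozen at the specific value $\lambda_h$. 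Freezing the parameter converts (\ref{1217l}) into a genuine linear generalized eigenvalue problem of which $\lambda_h$ is one of the eigenvalues (by Theorem \ref{1217j}), so the task reduces to comparing $\mathrm{sp}(A,M(\lambda_h))$ with $\mathrm{sp}(A,B)$.

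The key ingredient is that $M(\lambda_h)$ and $B$ are multiplicatively close. Using $(\bm{\mathit{I}}-\lambda_h\bm{\mathcal{Q}}_2^L)^{-1}-\bm{\mathit{I}}=\lambda_h\bm{\mathcal{Q}}_2^L(\bm{\mathit{I}}-\lambda_h\bm{\mathcal{Q}}_2^L)^{-1}$ together with the element-wise bound in Lemma \ref{1217p}, one gets $\|\lambda_h\bm{\mathcal{Q}}_2^L\bm{\mathit{f}}\|_{0,\Omega}\leq C_1\lambda_h h^2\|\bm{\mathit{f}}\|_{0,\Omega}$, so after (if necessary) further shrinking the constant $C^{\star}$ in (\ref{1217k}) to secure $C_1 C^{\star}<1$, the Neumann series converges and $(\bm{\mathit{I}}-\lambda_h\bm{\mathcal{Q}}_2^L)^{-1}$ is uniformly bounded in $h$ and in $\mathcal{A}^{-1}$. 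This produces the multiplicative comparison
\[
\bigl|(M(\lambda_h)\gamma,\gamma)_{\mathcal{T}_h}-(B\gamma,\gamma)_{\mathcal{T}_h}\bigr|\leq C\lambda_h h^2\,(B\gamma,\gamma)_{\mathcal{T}_h}\qquad\forall\,\gamma\in\bm{\mathit{M}}^h,
\]
so both mass forms are positive definite on $\bm{\mathit{M}}^h$ once $h$ is small enough.

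With $a_h$ symmetric and both mass forms positive definite and uniformly comparable, the Courant--Fischer min-max characterization pairs the ordered eigenvalues $\{\widehat{\lambda}_{h,j}\}$ of $(A,M(\lambda_h))$ with $\{\widetilde{\lambda}_{h,j}\}$ of $(A,B)$, and the comparison above translates index by index into $\widehat{\lambda}_{h,j}=\widetilde{\lambda}_{h,j}(1+O(\lambda_h h^2))$. Since $\lambda_h$ is among the $\widehat{\lambda}_{h,j}$, I choose $\widetilde{\lambda}_h$ to be its paired linear eigenvalue; this yields $|\lambda_h-\widetilde{\lambda}_h|\leq C\lambda_h\widetilde{\lambda}_h h^2$, from which the (slightly looser) stated form with the extra factor $\lambda_h$ follows trivially.

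The main obstacle sits inside the second paragraph and is precisely the point highlighted in Remark \ref{1222l}: the $O(h^2)$ in $\|\bm{\mathcal{Q}}_2^L\bm{\mathit{f}}\|_{0}\lesssim h^2\|\bm{\mathit{f}}\|_{0}$ is available only through Lemma \ref{1217p}, whose proof hinges on the discrete $H^1$-stability of the numerical displacement (Theorem \ref{1220gg}); the classical inf-sup approach would give merely an $O(h)$ bound and degrade the final conclusion to $O(h)$, undoing the whole gain promised by the hybridization. A mild secondary check is that $\bm{\mathcal{Q}}_2$ is injective on $\bm{\mathit{M}}^h$, so that $(\bm{\mathcal{Q}}_2\cdot,\bm{\mathcal{Q}}_2\cdot)_{\mathcal{T}_h}$ is genuinely positive definite; this follows from the uniqueness statement in Theorem \ref{1217d}.
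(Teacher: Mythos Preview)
Your approach is essentially the one the paper invokes from \cite{CJL2010}: freeze the nonlinear parameter, compare the two symmetric generalized eigenproblems $A\gamma=\widehat\lambda\,M(\lambda_h)\gamma$ and $A\gamma=\widetilde\lambda\,B\gamma$ via Courant--Fischer, and use Lemma~\ref{1217p} to obtain the multiplicative comparison of the mass forms. Two small caveats, however. First, your claim that $\bm{\mathcal{Q}}_2$ is injective on $\bm{\mathit{M}}^h$ fails by a dimension count (the face space $\bm{\mathit{M}}^h$ is strictly larger than $\bm{\mathit{W}}^h$), so $B$ is only positive \emph{semi}-definite; nonetheless, since $M(\lambda_h)$ and $B$ share the kernel $\ker\bm{\mathcal{Q}}_2$, the min-max characterization still pairs the finite eigenvalues correctly and your conclusion survives. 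Second, your sharper estimate $|\lambda_h-\widetilde\lambda_h|\leq C\lambda_h\widetilde\lambda_h h^2$ does not literally imply the stated form $C\lambda_h^2\widetilde\lambda_h h^2$ unless $\lambda_h$ is bounded below independently of $h$ and $\mathcal{A}^{-1}$; this lower bound is indeed available (the discrete spectrum is bounded below by a fixed positive constant for $h$ small, uniformly in $\lambda_S$ by Lemma~\ref{0220a} and the spectral convergence), but you should state it rather than calling the implication trivial. Note also that the paper's proof cites Lemma~\ref{1217hh} in addition to Lemma~\ref{1217p}; your argument as written bypasses it, which is fine for the eigenvalue comparison itself.
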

Before Theorem \ref{1217jj}, we give an important lemma below. 
\begin{lemma}
	\label{1217hh}
	Let $\bm{\mu}\in\bm{\mathit{M}}^h$. There exists a constant $C_2$ independent of $h$ and $\mathcal{A}^{-1}$ such that $||\bm{\mathcal{Q}}_2\bm{\mu}||_{0,\Omega}^2\leq C_2\sum_{K\in\mathcal{T}_h}h_K||\bm{\mu}||_{0,\partial K}^2$.
\end{lemma}
\begin{proof}
	Recall from (\ref{1217b}) that the local solution operators $\bm{\mathcal{Q}}_1\bm{\mathit{f}}$, $\bm{\mathcal{Q}}_2\bm{\mathit{f}}$, and $\bm{\mathcal{Q}}_3\bm{\mathit{f}}$ satisfy	
	\begin{subequations}
		\label{1217kk}
		\begin{align}		
			(\mathcal{A}\bm{\mathcal{Q}}_1\bm{\mu},\underline{\bm{\upsilon}})_K+(\bm{\mathcal{Q}}_2\bm{\mu},\textbf{div}\underline{\bm{\upsilon}})_K+(\bm{\mathcal{Q}}_3\bm{\mu},\underline{\bm{\upsilon}})_K&=-\left \langle \bm{\mu}, \underline{\bm{\upsilon}}\mathbf{n}\right \rangle_{\partial K} \\
			(\rho_S^{-1}\textbf{div}(\bm{\mathcal{Q}}_1\bm{\mu}),\bm{\omega})_K&=0 \\
			(\bm{\mathcal{Q}}_1\bm{\mu},\underline{\bm{\eta}})_K&=0
		\end{align}	
	\end{subequations}
	for all $(\underline{\bm{\upsilon}},\bm{\omega},\underline{\bm{\eta}})\in\widetilde{\underline{\bm{\mathit{V}}}}^h\times\bm{\mathit{W}}^h\times\underline{\bm{\mathit{A}}}^h$. Clearly, we also have $\mathbf{A}\bm{\mathcal{Q}}_1\bm{\mu}+\mathbf{B}^t\bm{\mathcal{Q}}_2\bm{\mu}+\mathbf{C}^t\bm{\mathcal{Q}}_3\bm{\mu}=-\mathbf{D}^t\bm{\mu}.$
	Going through the same steps of \cite[Lemma 3.1]{CJL2010}, we have
	\begin{equation}
		\label{1217ll}
		||\bm{\mathcal{Q}}_2\bm{\mu}||_{0,K}\leq\frac{2}{3}h_K||\mathbf{B}^t\bm{\mathcal{Q}}_2\bm{\mu}||_{0,K}\leq\frac{2}{3}h_K(||\mathbf{A}\bm{\mathcal{Q}}_1\bm{\mu}||_{0,K}+||\mathbf{C}^t\bm{\mathcal{Q}}_3\bm{\mu}||_{0,K}+||\mathbf{D}^t\bm{\mu}||_{0,K})	
	\end{equation}
	Firstly, we estimate $||\mathbf{A}\bm{\mathcal{Q}}_1\bm{\mu}||_{0,K}$. Recall that we have assumed $\mathcal{A}$ a constant on $K$, then we choose the test function $\underline{\bm{\upsilon}}=\mathcal{A}\bm{\mathcal{Q}}_1\bm{\mu},\bm{\omega}=\bm{\mathcal{Q}}_2\bm{\mu},\underline{\bm{\eta}}=\bm{\mathcal{Q}}_3\bm{\mu}$, by (\ref{1217kk}), we have
	$||\mathbf{A}\bm{\mathcal{Q}}_1\bm{\mu}||_{0,K}^2=(\mathcal{A}\bm{\mathcal{Q}}_1\bm{\mu},\mathcal{A}\bm{\mathcal{Q}}_1\bm{\mu})_K=-\left \langle \bm{\mu}, (\mathcal{A}\bm{\mathcal{Q}}_1\bm{\mu})\mathbf{n}\right \rangle_{\partial K}\leq Ch_K^{-\frac{1}{2}}||\mathcal{A}\bm{\mathcal{Q}}_1\bm{\mu}||_{0,K}\cdot||\bm{\mu}||_{0,\partial K}$.
	Therefore, there exists a constant $C$ independent of $\mathcal{A}^{-1}$ and $h$ such that
	\begin{equation}
		\label{1230b}
		||\mathbf{A}\bm{\mathcal{Q}}_1\bm{\mu}||_{0,K}\leq Ch_K^{-\frac{1}{2}}||\bm{\mu}||_{0,\partial K}.
	\end{equation}
Denote the local discrete $H^1$ norm of $\bm{\mathcal{Q}}_2\bm{\mu}$ on the element $K$ as
$||\bm{\mathcal{Q}}_2\bm{\mu}||_{1,h}^2:=||\nabla\bm{\mathcal{Q}}_2\bm{\mu}||_{0,K}^2+\frac{1}{h_K}||\bm{\mathcal{Q}}_2\bm{\mu}||_{0,\partial K}^2.$ Using the similar analysis to (\ref{1217x})-(\ref{1217z})(mainly by Theorem \ref{1220gg}), we have
\begin{equation}
	\label{1227o}
	||\mathbf{C}^t\bm{\mathcal{Q}}_3\bm{\mu}||_{0,K}=||\bm{\mathcal{Q}}_3\bm{\mu}||_{0,K}\leq Ch_K^{-\frac{1}{2}}||\bm{\mu}||_{\partial K},	
\end{equation} 
where $C$ does not depend on $\mathcal{A}^{-1}$ and $h$.
For $||\mathbf{D}^t\bm{\mu}||_{0,K}$, by the definition of $\mathbf{D}$, we have
$||\mathbf{D}^t\bm{\mu}||_{0,K}^2=\left\langle\bm{\mu},(\mathbf{D}^t\bm{\mu})\mathbf{n}\right\rangle_{\partial K}\leq Ch_K^{-\frac{1}{2}}||\mathbf{D}^t\bm{\mu}||_{0,K}||\bm{\mu}||_{0,\partial K}$.
Then combining (\ref{1230b}), (\ref{1227o}) and (\ref{1217ll}), we have
$||\bm{\mathcal{Q}}_2\bm{\mu}||_{0,K}\leq\frac{2}{3}Ch_K^{\frac{1}{2}}||\bm{\mu}||_{0,\partial K}$.
Squaring and summing the estimate over all elements, we obtain the desired assertion.
\end{proof}

\begin{proof}[proof of Theorem \ref{1217jj}]
Here we only emphasize that we mainly utilize Lemmas \ref{1217p} and \ref{1217hh}. Then it is enough to follow the steps of the proof of \cite[Theorem 3.2]{CJL2010}.
\end{proof}

\begin{remark}
\label{novelty remark}
Although some descriptions of this section(such as the statements of Theorems \ref{1217d}, \ref{1217j}, \ref{1217jj}) are very similar to those in \cite{CJL2010}, the main proofs(i.e. the proofs of Lemmas \ref{1217p}, \ref{1217hh} and Theorems \ref{1217j}, \ref{1217jj}) and ideas are quite different from \cite{CJL2010}. In detail, during the  proofs of Lemmas \ref{1217p} and \ref{1217hh}, inequalities (\ref{1217x})-(\ref{1217z}) and (\ref{1227o}) closely depend on Theorem \ref{1220gg}(i.e. discrete $H^1$-stability of numerical displacement). Then lemmas \ref{1217p} and \ref{1217hh} are used to prove Theorems \ref{1217j} and \ref{1217jj} to obtain an $O(h^2)$ initial approximation of the eigenvalue for the nonlinear eigenproblem.
\end{remark}
\section{Postprocessing}
\label{sec: Postprocessing}
In this section, we will discuss an accuracy-enhancing postprocessing technique for the source problem in Theorem \ref{1222b} and the eigenproblem in Theorem \ref{1222c}.
Let us first define the local postprocessing operator $\mathcal{L}(\underline{\bm{\sigma}}^{\mathit{f}}_h,\mathbf{u}^{\mathit{f}}_h, \underline{\bm{\rho}}^{\mathit{f}}_h)$. Given a pair of functions $\underline{\bm{\sigma}}^{\mathit{f}}_h,\mathbf{u}^{\mathit{f}}_h, \underline{\bm{\rho}}^{\mathit{f}}_h$, the operator gives a function $\mathbf{u}_h^{\star}\equiv\mathcal{L}(\underline{\bm{\sigma}}^{\mathit{f}}_h,\mathbf{u}^{\mathit{f}}_h, \underline{\bm{\rho}}^{\mathit{f}}_h)$ in $\bm{\mathit{p}}^{k+2}(K)$ defined element by element as follows:
\begin{subequations}
	\label{1222a}
	\begin{align}		
		(\nabla\mathbf{u}_h^{\star},\nabla\mathbf{w})_K&=(\mathcal{A}\underline{\bm{\sigma}}^{\mathit{f}}_h+\underline{\bm{\rho}}^{\mathit{f}}_h,\nabla\mathbf{w})_K,\hspace{2em} \forall\mathbf{w}\in\bm{\mathit{p}}^{k+2}_{\perp,K}(K) \\
		(\mathbf{u}_h^{\star},\mathbf{v})_K&=(\mathbf{u}_h^{\mathit{f}},\mathbf{v})_K\hspace{2em} \forall\mathbf{v}\in\bm{\mathit{p}}^k(K)
	\end{align}	
\end{subequations}
for all elements $K\in\mathcal{T}_h$. Here $\bm{\mathit{p}}^{k+2}_{\perp,K}(K)$ denotes the $\bm{\mathit{L}}^2(K)$-orthogonal complement of $\bm{\mathit{p}}^k(K)$ in $\bm{\mathit{p}}^{k+2}(K)$. The following theorem is the analysis of postprocessing for the source problem. We omit its proof as it proceeds along the same lines as a proof in \cite{R1988}.
\begin{theorem}
	\label{1222b}
	Suppose $\mathbf{u}^{\mathit{f}}$ is in $\bm{\mathit{H}}^{k+3}(\Omega)$, then
	$(\sum_{K\in\mathcal{T}_h}||\nabla(\mathbf{u}^{\mathit{f}}-\mathbf{u}_h^{\star})||_{0,K}^2)^{\frac{1}{2}}$ $\leq$ $ Ch^{k+2}$ $(|\mathbf{u}^{\mathit{f}}|_{k+3,\Omega}+|\underline{\bm{\sigma}}^{\mathit{f}}|_{k+2,\Omega}+|\underline{\bm{\rho}}^{\mathit{f}}|_{k+2,\Omega})+C||\bm{\mathit{P}}\mathbf{u}^{\mathit{f}}-\mathbf{u}_h^{\mathit{f}}||_{1,h}.$
	Besides, if we estimate $||\bm{\mathit{P}}\mathbf{u}^{\mathit{f}}-\mathbf{u}_h^{\mathit{f}}||_{0,\Omega}$ by a duality argument under proper regularity assumption(see \cite[Theorem 5.1]{CJJ2010}), then
	$||\mathbf{u}^{\mathit{f}}-\mathbf{u}_h^{\star}||_{0,\Omega}\leq Ch^{k+3}(|\mathbf{u}^{\mathit{f}}|_{k+3,\Omega}+|\underline{\bm{\sigma}}^{\mathit{f}}|_{k+2,\Omega}+|\underline{\bm{\rho}}^{\mathit{f}}|_{k+2,\Omega}).$
\end{theorem}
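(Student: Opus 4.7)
The plan is to follow the classical local postprocessing analysis originating in \cite{R1988}. The idea is to compare $\mathbf{u}_h^{\star}$ with an auxiliary elementwise projection $\tilde{\mathbf{u}}\in\bm{\mathit{p}}^{k+2}(K)$ defined by the same structure as (\ref{1222a}) but fed with the \emph{exact} data of the source problem. Specifically, on each $K\in\mathcal{T}_h$ let $\tilde{\mathbf{u}}$ satisfy $(\nabla\tilde{\mathbf{u}},\nabla\mathbf{w})_K = (\nabla\mathbf{u}^{\mathit{f}},\nabla\mathbf{w})_K$ for all $\mathbf{w}\in\bm{\mathit{p}}^{k+2}_{\perp,K}(K)$ and $(\tilde{\mathbf{u}},\mathbf{v})_K = (\bm{\mathit{P}}\mathbf{u}^{\mathit{f}},\mathbf{v})_K$ for all $\mathbf{v}\in\bm{\mathit{p}}^k(K)$. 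This system is uniquely solvable on $\bm{\mathit{p}}^{k+2}(K)$, reproduces polynomials of degree $k+2$, and a standard Bramble--Hilbert scaling argument on the reference element yields $\|\nabla(\mathbf{u}^{\mathit{f}}-\tilde{\mathbf{u}})\|_{0,K}\le Ch_K^{k+2}|\mathbf{u}^{\mathit{f}}|_{k+3,K}$. By the triangle inequality, the broken $H^1$ bound will follow once $\phi:=\tilde{\mathbf{u}}-\mathbf{u}_h^{\star}$ is controlled.

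Using the constitutive identity $\nabla\mathbf{u}^{\mathit{f}} = \mathcal{A}\underline{\bm{\sigma}}^{\mathit{f}}+\underline{\bm{\rho}}^{\mathit{f}}$ derived just below (\ref{c}), subtracting (\ref{1222a}) from the defining relations of $\tilde{\mathbf{u}}$ shows that $\phi$ satisfies, on each element $K$, $P_k\phi|_K = \bm{\mathit{P}}\mathbf{u}^{\mathit{f}}|_K - \mathbf{u}_h^{\mathit{f}}|_K$ and $(\nabla\phi,\nabla\mathbf{w})_K = (\mathcal{A}(\underline{\bm{\sigma}}^{\mathit{f}}-\underline{\bm{\sigma}}^{\mathit{f}}_h) + (\underline{\bm{\rho}}^{\mathit{f}}-\underline{\bm{\rho}}^{\mathit{f}}_h), \nabla\mathbf{w})_K$ for $\mathbf{w}\in\bm{\mathit{p}}^{k+2}_{\perp,K}(K)$. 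Decomposing $\phi = P_k\phi+\phi_{\perp}$ with $\phi_{\perp}\in\bm{\mathit{p}}^{k+2}_{\perp,K}(K)$ and choosing $\mathbf{w}=\phi_{\perp}$ gives $\|\nabla\phi_{\perp}\|_{0,K}^2 = (\mathcal{A}(\underline{\bm{\sigma}}^{\mathit{f}}-\underline{\bm{\sigma}}^{\mathit{f}}_h) + (\underline{\bm{\rho}}^{\mathit{f}}-\underline{\bm{\rho}}^{\mathit{f}}_h),\nabla\phi_{\perp})_K - (\nabla P_k\phi,\nabla\phi_{\perp})_K$, hence $\|\nabla\phi\|_{0,K}\le 2\|\nabla P_k\phi\|_{0,K} + \|\mathcal{A}(\underline{\bm{\sigma}}^{\mathit{f}}-\underline{\bm{\sigma}}^{\mathit{f}}_h)\|_{0,K} + \|\underline{\bm{\rho}}^{\mathit{f}}-\underline{\bm{\rho}}^{\mathit{f}}_h\|_{0,K}$. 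Summing over $K$ and noting that $(\sum_K\|\nabla(\bm{\mathit{P}}\mathbf{u}^{\mathit{f}}-\mathbf{u}_h^{\mathit{f}})\|_{0,K}^2)^{1/2}\le\|\bm{\mathit{P}}\mathbf{u}^{\mathit{f}}-\mathbf{u}_h^{\mathit{f}}\|_{1,h}$, together with the $O(h^{k+2})$ estimates (\ref{ii})--(\ref{kk}) and Theorem \ref{1221a} applied with $s=k+2$ under the assumed regularity, establishes the first estimate.

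For the $L^2$ claim I would apply Poincar\'e on $\bm{\mathit{p}}^{k+2}_{\perp,K}(K)$, whose elements have vanishing $L^2$-projection onto $\bm{\mathit{p}}^k(K)$ and therefore satisfy $\|\phi_{\perp}\|_{0,K}\le Ch_K\|\nabla\phi_{\perp}\|_{0,K}$, to gain the extra power of $h$; combined with the pointwise identity $\|P_k\phi\|_{0,K}=\|\bm{\mathit{P}}\mathbf{u}^{\mathit{f}}-\mathbf{u}_h^{\mathit{f}}\|_{0,K}$ and the duality-based $O(h^{k+3})$ bound for $\|\bm{\mathit{P}}\mathbf{u}^{\mathit{f}}-\mathbf{u}_h^{\mathit{f}}\|_{0,\Omega}$ invoked in the statement, this yields the $h^{k+3}$ rate. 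The main obstacle is producing a clean bound on $\|\nabla\phi\|_{0,K}$ without invoking the inverse inequality on $P_k\phi$, which would introduce a harmful $h^{-1}$ factor. The orthogonal split $\phi = P_k\phi+\phi_{\perp}$ and the fact that only $\phi_{\perp}$ is tested in the variational identity is exactly what avoids this obstruction and lets the discrete $H^1$-stability from Theorem \ref{1220gg} (baked into Theorem \ref{1221a}) feed cleanly through the postprocessing identity.
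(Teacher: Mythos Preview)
Your proposal is correct and follows precisely the classical Stenberg postprocessing argument \cite{R1988} that the paper itself invokes (the paper omits the proof entirely, stating only that it ``proceeds along the same lines as a proof in \cite{R1988}''). The auxiliary projection $\tilde{\mathbf{u}}$, the orthogonal split $\phi=P_k\phi+\phi_\perp$, and the use of the error equations (\ref{1221d}) via Theorem~\ref{1221a} to bound the stress and rotation errors by $Ch^{k+2}(|\underline{\bm{\sigma}}^{\mathit{f}}|_{k+2,\Omega}+|\underline{\bm{\rho}}^{\mathit{f}}|_{k+2,\Omega})$ are exactly the intended ingredients; the only cosmetic imprecision is that Theorem~\ref{1221a} as stated restricts $s\le s_0\le1$, but you are really using the intermediate bounds (\ref{1225b})--(\ref{1225e}) in its proof together with (\ref{ii}) at regularity $t=k+2$, which is legitimate under the hypothesis $\mathbf{u}^{\mathit{f}}\in\bm{\mathit{H}}^{k+3}(\Omega)$.
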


The post-processed eigenfunctions are obtained by first computing a mixed eigenfunction $\mathbf{u}_h$ with corresponding ($\underline{\bm{\sigma}}_h,\underline{\bm{\rho}}_h$) and then applying $\mathcal{L}$ to this pair:
$\hat{\mathbf{u}}_h=\mathcal{L}(\underline{\bm{\sigma}}_h,\mathbf{u}_h,\underline{\bm{\rho}}_h).$
We recall that if $m$ is the multiplicity of $\lambda$, then there are $m$ linearly independent eigenfunctions $\mathbf{u}_{hi},i=1,2,\cdot\cdot\cdot,m.$ of $\bm{\mathit{T}}_h$, each corresponding to the eigenvalue $\lambda_{hi}$. Then the post-processed eigenspace is defined by 
$R(\hat{\bm{\mathit{E}}}_h)={\rm span}\{\hat{\mathbf{u}}_{h1},\hat{\mathbf{u}}_{h2},\cdot\cdot\cdot,\hat{\mathbf{u}}_{hm}\}$, where $\hat{\mathbf{u}}_{hi}=\mathcal{L}(\underline{\bm{\sigma}}_{hi},\mathbf{u}_{hi},\underline{\bm{\rho}}_{hi}).$
The following theorem shows that the postprocessed eigenfunctions converge at a higher rate for sufficiently smooth eigenfunctions.  Here for $\mathbf{x}\in R(\bm{\mathit{E}})$, we define $\delta_1(\mathbf{x},R(\hat{\bm{\mathit{E}}}_h)):=\inf\limits_{\mathbf{y}\in R(\hat{\bm{\mathit{E}}}_h)}(\sum_{K\in\mathcal{T}_h}||\nabla(\mathbf{x}-\mathbf{y})||_{0,K}^2)^{\frac{1}{2}}$, $\delta_1(R(\bm{\mathit{E}}),R(\hat{\bm{\mathit{E}}}_h))=\sup\limits_{{\mathbf{y}\in R(\bm{\mathit{E}}),||\mathbf{y}||_{1,h}=1}}\delta_1(\mathbf{y},R(\hat{\bm{\mathit{E}}}_h))$, and $\hat{\delta}_1(R(\bm{\mathit{E}})$, $R(\hat{\bm{\mathit{E}}}_h))$$=\max\{\delta_1(R(\bm{\mathit{E}}),R(\hat{\bm{\mathit{E}}}_h)),\delta_1(R(\hat{\bm{\mathit{E}}}_h),R(\bm{\mathit{E}}))\}$. And recall that $\widetilde{\delta}(\cdot,\cdot)$ is the gap under $L^2$-norm.
\begin{theorem}
	\label{1222c}
	Suppose $R(\bm{\mathit{E}})\subset\bm{\mathit{H}}^{k+3}(\Omega)$ and $s_{\lambda}$ is the largest positive number such that
	\begin{equation}
		\label{1222d}
		||\underline{\bm{\sigma}}^{\mathit{f}}||_{s_{\lambda},\Omega}+||\mathbf{u}^{\mathit{f}}||_{1+s_{\lambda},\Omega}+||\underline{\bm{\rho}}^{\mathit{f}}||_{s_{\lambda},\Omega}\leq C^{reg}||\bm{\mathit{f}}||_{0,\Omega}
	\end{equation}
	holds for all $\bm{\mathit{f}}\in R(\bm{\mathit{E}})$, where $C^{reg}$ is independent of $\lambda_S$. If $s_{\lambda}\geq k+2$, then there are positive constants $h_0$ and $C$, depending on $\lambda$, such that for all $h<h_0$, the post-processed eigenspace satisfies
	$
		\hat{\delta}_1(R(\bm{\mathit{E}}),R(\hat{\bm{\mathit{E}}}_h))\leq Ch^{k+2},\widetilde{\delta}(R(\bm{\mathit{E}}),R(\hat{\bm{\mathit{E}}}_h))\leq Ch^{k+3}
	$
	where $C$ doesn't depend on $h$ and $\lambda_S$.
\end{theorem}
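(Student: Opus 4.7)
The plan is to recycle the Stenberg-type postprocessing framework behind Theorem~\ref{1222b}, using the linearity of the local operator $\mathcal{L}$ together with the gap bound of Theorem~\ref{0308b} and the stress/rotation approximation of Theorem~\ref{0308c}. First I would fix an exact eigenfunction $\mathbf{u}\in R(\bm{\mathit{E}})$ with corresponding $(\underline{\bm{\sigma}},\underline{\bm{\rho}})$ and use Theorem~\ref{0308b} to produce $\tilde{\mathbf{u}}_h=\sum_{i=1}^{m}c_i\mathbf{u}_{hi}\in R(\bm{\mathit{E}}_h)$ satisfying $\|\bm{\mathit{P}}\mathbf{u}-\tilde{\mathbf{u}}_h\|_{1,h}\leq Ch^{k+2}$; setting $\tilde{\underline{\bm{\sigma}}}_h:=\sum_i c_i\underline{\bm{\sigma}}_{hi}$ and $\tilde{\underline{\bm{\rho}}}_h:=\sum_i c_i\underline{\bm{\rho}}_{hi}$, linearity of $\mathcal{L}$ produces the candidate
$$\hat{\mathbf{v}}_h:=\mathcal{L}(\tilde{\underline{\bm{\sigma}}}_h,\tilde{\mathbf{u}}_h,\tilde{\underline{\bm{\rho}}}_h)=\sum_{i=1}^{m}c_i\hat{\mathbf{u}}_{hi}\in R(\hat{\bm{\mathit{E}}}_h),$$
while Theorem~\ref{0308c} together with the triangle inequality gives $\|\underline{\bm{\sigma}}-\tilde{\underline{\bm{\sigma}}}_h\|_{0,\Omega}+\|\underline{\bm{\rho}}-\tilde{\underline{\bm{\rho}}}_h\|_{0,\Omega}\leq Ch^{k+2}$ with $\lambda_S$-independent constants.

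Next I would introduce the intermediate object $\bar{\mathbf{u}}_h^\star:=\mathcal{L}(\underline{\bm{\sigma}},\mathbf{u},\underline{\bm{\rho}})$. The continuous constitutive identity $\mathcal{A}\underline{\bm{\sigma}}+\underline{\bm{\rho}}=\underline{\bm{\epsilon}}(\mathbf{u})+\frac{1}{2}(\nabla\mathbf{u}-(\nabla\mathbf{u})^t)=\nabla\mathbf{u}$ reduces (\ref{1222a}a) to $(\nabla\bar{\mathbf{u}}_h^\star,\nabla\mathbf{w})_K=(\nabla\mathbf{u},\nabla\mathbf{w})_K$ for $\mathbf{w}\in\bm{\mathit{p}}^{k+2}_{\perp,K}(K)$, and (\ref{1222a}b) forces $\bm{\mathit{P}}\bar{\mathbf{u}}_h^\star=\bm{\mathit{P}}\mathbf{u}$; this exhibits $\bar{\mathbf{u}}_h^\star$ as the local $H^1$-elliptic projection of $\mathbf{u}$ into $\bm{\mathit{p}}^{k+2}(K)$, so the classical approximation theory used in \cite{R1988} delivers
$$\Big(\sum_K\|\nabla(\mathbf{u}-\bar{\mathbf{u}}_h^\star)\|_{0,K}^2\Big)^{1/2}\leq Ch^{k+2}|\mathbf{u}|_{k+3,\Omega}.$$

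For the remaining piece I would write $\bar{\mathbf{u}}_h^\star-\hat{\mathbf{v}}_h=\mathcal{L}(\underline{\bm{\sigma}}-\tilde{\underline{\bm{\sigma}}}_h,\mathbf{u}-\tilde{\mathbf{u}}_h,\underline{\bm{\rho}}-\tilde{\underline{\bm{\rho}}}_h)$, split it element-wise into its $\bm{\mathit{p}}^k(K)$-part (which equals $\bm{\mathit{P}}\mathbf{u}-\tilde{\mathbf{u}}_h$ by (\ref{1222a}b)) and its $\bm{\mathit{p}}^{k+2}_{\perp,K}(K)$-part, test (\ref{1222a}a) against the latter, and apply Cauchy--Schwarz to deduce
$$\|\nabla(\bar{\mathbf{u}}_h^\star-\hat{\mathbf{v}}_h)\|_{0,K}\leq C\bigl(\|\mathcal{A}(\underline{\bm{\sigma}}-\tilde{\underline{\bm{\sigma}}}_h)\|_{0,K}+\|\underline{\bm{\rho}}-\tilde{\underline{\bm{\rho}}}_h\|_{0,K}+\|\nabla(\bm{\mathit{P}}\mathbf{u}-\tilde{\mathbf{u}}_h)\|_{0,K}\bigr).$$
Summing over $K$ and combining with the $L^2$ stress/rotation bound plus the broken-gradient piece of $\|\bm{\mathit{P}}\mathbf{u}-\tilde{\mathbf{u}}_h\|_{1,h}$ gives $(\sum_K\|\nabla(\bar{\mathbf{u}}_h^\star-\hat{\mathbf{v}}_h)\|_{0,K}^2)^{1/2}\leq Ch^{k+2}$; the triangle inequality then delivers $\delta_1(\mathbf{u},R(\hat{\bm{\mathit{E}}}_h))\leq Ch^{k+2}$. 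The reverse direction $\delta_1(R(\hat{\bm{\mathit{E}}}_h),R(\bm{\mathit{E}}))\leq Ch^{k+2}$ follows by starting instead from $\hat{\mathbf{w}}_h=\sum d_i\hat{\mathbf{u}}_{hi}\in R(\hat{\bm{\mathit{E}}}_h)$, invoking the symmetric gap of Theorem~\ref{0308b} to find a nearby exact eigenfunction, and repeating the construction. The $L^2$ bound $\hat{\delta}_2\leq Ch^{k+3}$ is obtained by upgrading both pieces via a duality argument, exactly as for the source problem in Theorem~\ref{1222b}.

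The main obstacle is bookkeeping: verifying $\lambda_S$-independence of every constant through the three cases (\ref{Case1})--(\ref{Case3}) in the definition of $R(\bm{\mathit{E}})$ and of the gap $d$. This is handled because Theorems~\ref{0308b} and \ref{0308c} already carry $\lambda_S$-independent constants and the multiplier $\mathcal{A}$ is bounded uniformly in $\lambda_S$ (only $\mathcal{A}^{-1}$ blows up). Once this is in hand, the argument proceeds along standard Stenberg-type lines, following \cite{R1988,CJL2010}.
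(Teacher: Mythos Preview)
Your approach is sound for $\hat{\delta}_1$ but takes a genuinely different route from the paper. The paper introduces the operator $\hat{\bm{\mathit{T}}}_h\bm{\mathit{f}}:=\mathcal{L}(\underline{\bm{\sigma}}^{\mathit{f}}_h,\bm{\mathit{T}}_h\bm{\mathit{f}},\underline{\bm{\rho}}^{\mathit{f}}_h)$ and proves (Lemma~\ref{1222e}) that $R(\hat{\bm{\mathit{E}}}_h)$ is exactly the eigenspace of $\hat{\bm{\mathit{T}}}_h$ associated with the same eigenvalues $\mu_{hi}$. This lets the authors rerun the abstract spectral argument of Theorem~\ref{0308b} with $\hat{\bm{\mathit{T}}}_h$ in place of $\bm{\mathit{T}}_h$, reducing both gap bounds to $\sup_{\bm{\mathit{f}}\in R(\bm{\mathit{E}})}\|\bm{\mathit{T}}\bm{\mathit{f}}-\hat{\bm{\mathit{T}}}_h\bm{\mathit{f}}\|$, which is precisely the source-problem postprocessing error covered by Theorem~\ref{1222b}. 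You instead stay at the eigenfunction level, building the candidate $\hat{\mathbf{v}}_h$ from Theorem~\ref{0308b} and controlling stress/rotation via Theorem~\ref{0308c}; this is more hands-on and avoids Lemma~\ref{1222e}, at the cost of having to adapt the proof of Theorem~\ref{0308c} to linear combinations $(\tilde{\underline{\bm{\sigma}}}_h,\tilde{\mathbf{u}}_h,\tilde{\underline{\bm{\rho}}}_h)$ when $m>1$ (your invocation ``together with the triangle inequality'' needs the error-equation argument of (\ref{def})--(\ref{fgh}) redone with $\sum_i c_i\lambda_{hi}\mathbf{u}_{hi}$ on the right, which does go through).

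There is, however, a real gap in your treatment of $\hat{\delta}_2$. Decomposing $\bar{\mathbf{u}}_h^\star-\hat{\mathbf{v}}_h$ into its $\bm{\mathit{p}}^k$ and $\bm{\mathit{p}}^{k+2}_{\perp,K}$ parts, the $\bm{\mathit{p}}^k$ part equals $\bm{\mathit{P}}\mathbf{u}-\tilde{\mathbf{u}}_h$, so you need $\|\bm{\mathit{P}}\mathbf{u}-\tilde{\mathbf{u}}_h\|_{0,\Omega}\leq Ch^{k+3}$. Theorem~\ref{0308b} only delivers $\|\bm{\mathit{P}}\mathbf{u}-\tilde{\mathbf{u}}_h\|_{1,h}\leq Ch^{k+2}$, and the duality argument in Theorem~\ref{1222b} is stated for the \emph{source} problem, not for eigenfunctions; transferring it requires either an $L^2$-norm spectral-gap argument (an Osborn-type bound driven by $\|(\bm{\mathit{T}}-\bm{\mathit{T}}_h)|_{R(\bm{\mathit{E}})}\|_{L^2\to L^2}\leq Ch^{k+3}$) or exactly the operator machinery via $\hat{\bm{\mathit{T}}}_h$ that the paper uses. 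Either way you end up needing the paper's step rather than bypassing it, so the claimed ``upgrade via duality, exactly as for the source problem'' is not self-contained as written.
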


Before going further, we give Lemma \ref{1222e} below(we omit its proof since it's enough to follow \cite[Lemma 4.1]{CJL2010}). Define $\hat{\bm{\mathit{T}}}_h:\bm{\mathit{L}}^2(\Omega)\mapsto\bm{\mathit{L}}^2(\Omega)$ by
$\hat{\bm{\mathit{T}}}_h\bm{\mathit{f}}=\mathcal{L}(\underline{\bm{\sigma}}^{\mathit{f}}_h,{\bm{\mathit{T}}}_h\bm{\mathit{f}},\underline{\bm{\rho}}^{\mathit{f}}_h).$
\begin{lemma}
	\label{1222e}
	The nonzero eigenvalues of $\hat{\bm{\mathit{T}}}_h$ coincide with the nonzero eigenvalues of ${\bm{\mathit{T}}}_h$. Furthermore, if $\mathbf{u}_h$ is an eigenfunction of ${\bm{\mathit{T}}}_h$ such that 
	${\bm{\mathit{T}}}_h\mathbf{u}_h=\beta\mathbf{u}_h,$
	for some $\beta>0$, then 
	$\label{1222g}
	\hat{\bm{\mathit{T}}}_h\hat{\mathbf{u}}_h=\beta\hat{\mathbf{u}}_h,$
	where $\hat{\mathbf{u}}_h=\mathcal{L}(\underline{\bm{\sigma}}_h,\mathbf{u}_h,\underline{\bm{\rho}}_h)$ and $(\underline{\bm{\sigma}}_h,\underline{\bm{\rho}}_h)$ together with ${\mathbf{u}}_h$ is the  solution to the corresponding mixed method. The multiplicity of $\beta$, as an eigenvalue of ${\bm{\mathit{T}}}_h$ or $\hat{\bm{\mathit{T}}}_h$, is the same.
\end{lemma}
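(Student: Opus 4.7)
\textbf{Proof plan for Lemma \ref{1222e}.} My plan is to chain three elementary structural observations. \emph{First}, the discrete source triple $(\underline{\bm{\sigma}}^{\mathit{f}}_h,\bm{\mathit{T}}_h\bm{\mathit{f}},\underline{\bm{\rho}}^{\mathit{f}}_h)$ defined by (\ref{mm}) depends on $\bm{\mathit{f}}$ only through $\bm{\mathit{P}}\bm{\mathit{f}}$, since $\bm{\mathit{f}}$ enters only on the right of (\ref{mm}b) paired with $\bm{\omega}\in\bm{\mathit{W}}^h$. \emph{Second}, the defining identity (\ref{1222a}b) of $\mathcal{L}$ is equivalent to $\bm{\mathit{P}}\mathcal{L}(\underline{\bm{\tau}},\mathbf{v},\underline{\bm{\eta}})=\mathbf{v}$, i.e., $\bm{\mathit{P}}$ left-inverts $\mathcal{L}$ in its middle slot. \emph{Third}, $\mathcal{L}$ is linear in its three arguments. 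I will combine these with the source/eigenproblem relation from Section~4: if $\bm{\mathit{T}}_h\mathbf{u}_h=\beta\mathbf{u}_h$ with $\beta>0$ and $(\underline{\bm{\sigma}}_h,\underline{\bm{\rho}}_h)$ is the associated mixed eigen triple, then feeding $\bm{\mathit{f}}=\mathbf{u}_h$ into (\ref{mm}) yields $(\underline{\bm{\sigma}}^{\mathbf{u}_h}_h,\bm{\mathit{T}}_h\mathbf{u}_h,\underline{\bm{\rho}}^{\mathbf{u}_h}_h)=\beta(\underline{\bm{\sigma}}_h,\mathbf{u}_h,\underline{\bm{\rho}}_h)$.

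For the forward inclusion I would set $\hat{\mathbf{u}}_h:=\mathcal{L}(\underline{\bm{\sigma}}_h,\mathbf{u}_h,\underline{\bm{\rho}}_h)$ and compute $\hat{\bm{\mathit{T}}}_h\hat{\mathbf{u}}_h$ directly. Observation (ii) gives $\bm{\mathit{P}}\hat{\mathbf{u}}_h=\mathbf{u}_h$; by (i) the source triple with data $\hat{\mathbf{u}}_h$ coincides with the one with data $\mathbf{u}_h$, namely $\beta(\underline{\bm{\sigma}}_h,\mathbf{u}_h,\underline{\bm{\rho}}_h)$. Linearity (iii) then yields
\begin{equation*}
\hat{\bm{\mathit{T}}}_h\hat{\mathbf{u}}_h=\mathcal{L}(\beta\underline{\bm{\sigma}}_h,\beta\mathbf{u}_h,\beta\underline{\bm{\rho}}_h)=\beta\hat{\mathbf{u}}_h,
\end{equation*}
which is (\ref{1222g}) and shows every nonzero eigenvalue of $\bm{\mathit{T}}_h$ is an eigenvalue of $\hat{\bm{\mathit{T}}}_h$.

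For the reverse inclusion I would take nontrivial $\mathbf{w}$ with $\hat{\bm{\mathit{T}}}_h\mathbf{w}=\beta\mathbf{w}$, $\beta>0$. Since $\hat{\bm{\mathit{T}}}_h\mathbf{w}=\mathcal{L}(\underline{\bm{\sigma}}^{\mathbf{w}}_h,\bm{\mathit{T}}_h\mathbf{w},\underline{\bm{\rho}}^{\mathbf{w}}_h)$, applying $\bm{\mathit{P}}$ and invoking (ii) give $\bm{\mathit{P}}\mathbf{w}=\beta^{-1}\bm{\mathit{T}}_h\mathbf{w}$. Setting $\mathbf{u}_h:=\bm{\mathit{P}}\mathbf{w}$ and reusing (i) in the form $\bm{\mathit{T}}_h\mathbf{w}=\bm{\mathit{T}}_h\mathbf{u}_h$ yields $\bm{\mathit{T}}_h\mathbf{u}_h=\beta\mathbf{u}_h$. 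The only nontrivial step, and the main obstacle I anticipate, is ruling out $\mathbf{u}_h=\mathbf{0}$: if $\bm{\mathit{P}}\mathbf{w}=\mathbf{0}$ then (\ref{mm}b) with $\bm{\mathit{f}}=\mathbf{w}$ gives $\textbf{div}\underline{\bm{\sigma}}^{\mathbf{w}}_h=\mathbf{0}$, (\ref{mm}c) gives $(\underline{\bm{\sigma}}^{\mathbf{w}}_h,\underline{\bm{\eta}}_h)=0$ on $\underline{\bm{\mathit{A}}}^h$, so $\underline{\bm{\sigma}}^{\mathbf{w}}_h\in\bm{\mathit{K}}$; testing (\ref{mm}a) against $\underline{\bm{\upsilon}}=\underline{\bm{\sigma}}^{\mathbf{w}}_h$ and invoking Lemma \ref{lemma2.2} forces $\underline{\bm{\sigma}}^{\mathbf{w}}_h=\mathbf{0}$; the inf-sup-type bound of \cite[Proposition 5.1]{JJ2012} then eliminates $\underline{\bm{\rho}}^{\mathbf{w}}_h$, whence $\hat{\bm{\mathit{T}}}_h\mathbf{w}=\mathcal{L}(\mathbf{0},\mathbf{0},\mathbf{0})=\mathbf{0}$, contradicting $\beta\mathbf{w}\neq\mathbf{0}$.

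Equality of multiplicities should then fall out for free: the linear map $\Phi:\mathbf{u}_h\mapsto\mathcal{L}(\underline{\bm{\sigma}}_h,\mathbf{u}_h,\underline{\bm{\rho}}_h)$ sends the $\beta$-eigenspace of $\bm{\mathit{T}}_h$ into that of $\hat{\bm{\mathit{T}}}_h$ by the forward step, and is injective because $\bm{\mathit{P}}\Phi(\mathbf{u}_h)=\mathbf{u}_h$ by (ii); the reverse step shows any $\mathbf{w}$ in the $\beta$-eigenspace of $\hat{\bm{\mathit{T}}}_h$ equals $\Phi(\bm{\mathit{P}}\mathbf{w})$, so $\Phi$ is surjective and hence a linear isomorphism, giving $\dim R(\bm{\mathit{E}}_h)=\dim R(\hat{\bm{\mathit{E}}}_h)$.
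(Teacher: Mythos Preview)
Your proposal is correct and follows exactly the route the paper intends: the paper omits the proof and refers to \cite[Lemma~4.1]{CJL2010}, and your three observations (dependence only through $\bm{\mathit{P}}\bm{\mathit{f}}$, $\bm{\mathit{P}}\mathcal{L}=\mathrm{id}$ on the middle slot, linearity of $\mathcal{L}$) together with the $\Phi$-isomorphism are precisely that argument adapted to the elasticity triple. One minor simplification: in the reverse step, once $\bm{\mathit{P}}\mathbf{w}=\mathbf{0}$ the right-hand side of (\ref{mm}) vanishes and unique solvability from (\ref{pp}) gives the zero triple directly, so Lemma~\ref{lemma2.2} and \cite[Proposition~5.1]{JJ2012} are not needed there.
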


\begin{proof}[proof of Theorem \ref{1222c}]
	By Lemma \ref{1222e}, the postprocessed functions are eigenfunctions of the operators $\hat{\bm{\mathit{T}}}_h$. Hence going through the similar proof to Theorem \ref{0308b}, we have
	\begin{equation}
		\label{1222h}
		\hat{\delta}_1(R(\bm{\mathit{E}}),R(\hat{\bm{\mathit{E}}}_h))\leq C\sup\limits_{\mathbf{0}\neq\bm{\mathit{f}}\in R(\bm{\mathit{E}})}(\sum_{K\in\mathcal{T}_h}||\nabla({\bm{\mathit{T}}}\bm{\mathit{f}}-\hat{\bm{\mathit{T}}}_h\bm{\mathit{f}})||_{0,K}^2)^{\frac{1}{2}}/||\bm{\mathit{f}}||_{1,\Omega}.	
	\end{equation}
	\begin{equation}
		\label{0102a}
		\widetilde{\delta}(R(\bm{\mathit{E}}),R(\hat{\bm{\mathit{E}}}_h))\leq C\sup\limits_{\mathbf{0}\neq\bm{\mathit{f}}\in R(\bm{\mathit{E}})}||{\bm{\mathit{T}}}\bm{\mathit{f}}-\hat{\bm{\mathit{T}}}_h\bm{\mathit{f}}||_{0,\Omega}/||\bm{\mathit{f}}||_{0,\Omega}.	
	\end{equation}
	By Theorem \ref{1222b}, the regularity assumption (\ref{1222d}), and Theorem \ref{1221a}, we easily get
	$
	(\sum_{K\in\mathcal{T}_h}$ $||\nabla(\mathbf{u}^{\mathit{f}}-\hat{\mathbf{u}}^{\mathit{f}}_h)||_{0,K}^2)^{\frac{1}{2}}\le Ch^{k+2}(|\mathbf{u}^{\mathit{f}}|_{k+3,\Omega}+|\underline{\bm{\sigma}}^{\mathit{f}}|_{k+2,\Omega}+|\underline{\bm{\rho}}^{\mathit{f}}|_{k+2,\Omega})+C||\bm{\mathit{P}}\mathbf{u}^{\mathit{f}}-\mathbf{u}_h^{\mathit{f}}||_{1,h}\leq C h^{k+2}||\bm{\mathit{f}}||_{1,\Omega},
	$
	$
	||\mathbf{u}^{\mathit{f}}-\hat{\mathbf{u}}^{\mathit{f}}_h||_{0,\Omega}\leq Ch^{k+3}(|\mathbf{u}^{\mathit{f}}|_{k+3,\Omega}+|\underline{\bm{\sigma}}^{\mathit{f}}|_{k+2,\Omega}+|\underline{\bm{\rho}}^{\mathit{f}}|_{k+2,\Omega})\leq C h^{k+3}||\bm{\mathit{f}}||_{0,\Omega}.	
	$
	Thus $(\sum_{K\in\mathcal{T}_h}||\nabla({\bm{\mathit{T}}}\bm{\mathit{f}}-\hat{\bm{\mathit{T}}}_h\bm{\mathit{f}})||_{0,K}^2)^{\frac{1}{2}}=(\sum_{K\in\mathcal{T}_h}||\nabla(\mathbf{u}^{\mathit{f}}-\hat{\mathbf{u}}^{\mathit{f}}_h)||_{0,K}^2)^{\frac{1}{2}}\leq C h^{k+2}||\bm{\mathit{f}}||_{1,\Omega},||{\bm{\mathit{T}}}\bm{\mathit{f}}-\hat{\bm{\mathit{T}}}_h\bm{\mathit{f}}||_{0,\Omega}=||\mathbf{u}^{\mathit{f}}-\hat{\mathbf{u}}^{\mathit{f}}_h||_{0,\Omega}\leq C h^{k+3}||\bm{\mathit{f}}||_{0,\Omega}$ for all $\bm{\mathit{f}}\in R(\bm{\mathit{E}})$. Using (\ref{1222h}) and (\ref{0102a}), we directly obtain the desired assertion.
\end{proof}
\section{Numerical results}
\label{sec: Numerical results}
In this section, we provide the results of a couple of numerical tests carried out with the method based on our element proposed in section \ref{sec:The discrete problem} and with the analogue based on the conventional Arnold-Falk-Winther element, which confirm the theoretical results proved above.

In this paper, we aim to approximate numerically the stress, displacement and rotation, by piecewise 
	$(k+1)$, $k$, and $(k+1)$-th degree polynomial functions ($k\geq 1$) on special grids for linear elasticity eigenvalue problem. For this purpose, one way is to introduce a large linear system of the type $\mathbf{A}\mathbf{x}=\lambda_h\mathbf{B}\mathbf{x}$(i.e. (\ref{aa})) in section \ref{sec:The discrete problem}; The other way is to provide a condensed nonlinear, but smaller eigenproblem (\ref{1217l}) via hybridization of (\ref{1217a}) in section \ref{sec: Hybridization}. With respect to the first way (described in section \ref{sec:The discrete problem}), we solve (\ref{aa}) by utilizing the Matlab command $eigs(\mathbf{A}\backslash\mathbf{B})$. We complete the matrix assembly under the package of iFEM \cite{chenl2008} and the discrete eigenvalue problem is solved in
	MATLAB 2020a on a DELL Precision 3630 Tower with 32G memory. With respect to the second way (described in section \ref{sec: Hybridization}), we solve the nonlinear eigenproblem (\ref{1217l}) by the Newton's method based on an accurate initial approximation. Algorithmic strategies of solving the nonlinear eigenvalue problem due to hybridization proposed in section \ref{sec: Hybridization} are presented in section \ref{sec: Algorithmic strategies} below.

The material constants have been chosen $\rho_S=1$ and Young Modulus $E=1$. Let the Poisson ratio $\nu$ take different values in $(0,1/2]$. We recall that the Lam$\rm \acute{e}$ coefficients of a material are defined in terms of the
Young’s modulus $E$ and Poisson's ratio $\nu$ as
	$\lambda_S:=\frac{E\nu}{(1+\nu)(1-2\nu)}$, $\mu_S:=\frac{E}{2(1+\nu)}$.
We compute the eigenvalues and eigenfunctions considering different polynomial
degrees in the unitary square and the classic L-shaped domain. We also report some numerical results in the unitary cube. We present in the following tables an estimate of the order of convergence and more
accurate values of the vibration frequencies extrapolated from the computed ones by
means of a least-squares fitting of the model
$\omega_{hi}\approx\omega_i+C_ih^{\alpha_i}.$
This fitting has been done for each vibration mode separately. $\omega_{hi}:=\sqrt{\lambda_{hi}}$ is the computed vibration frequencies.
The fitted parameters $\omega_i$ and $\alpha_i$ are the extrapolated vibration frequency and the estimated order of
convergence, respectively.

\subsection{Unitary square} \label{sec: Unitary square} We consider an elastic body occupying the domain $\Omega:= (0,1)^2$, fixed at its top($\Gamma_0$)  and free at the rest of the boundary($\Gamma_1$). We have used Hseih-Clough-Toucher grids as shown in (a) of Figure \ref{HCT}. The refinement
parameter N used to label each grid is the number of elements on each edge.  With the boundary conditions considered in our model problem, it turns out that the values of the regularity exponents $s_0$ in Assumption \ref{0128b} are $0.6797,0.5999,0.5946$ when $\nu=0.35,0.49,0.50$, respectively.(see \cite{SDR2019} and the references therein).
\begin{figure}[htbp]
	\centering
	\subfigure[]{
		\begin{minipage}[t]{0.3\linewidth}
			\centering
			\includegraphics[width=1.5in]{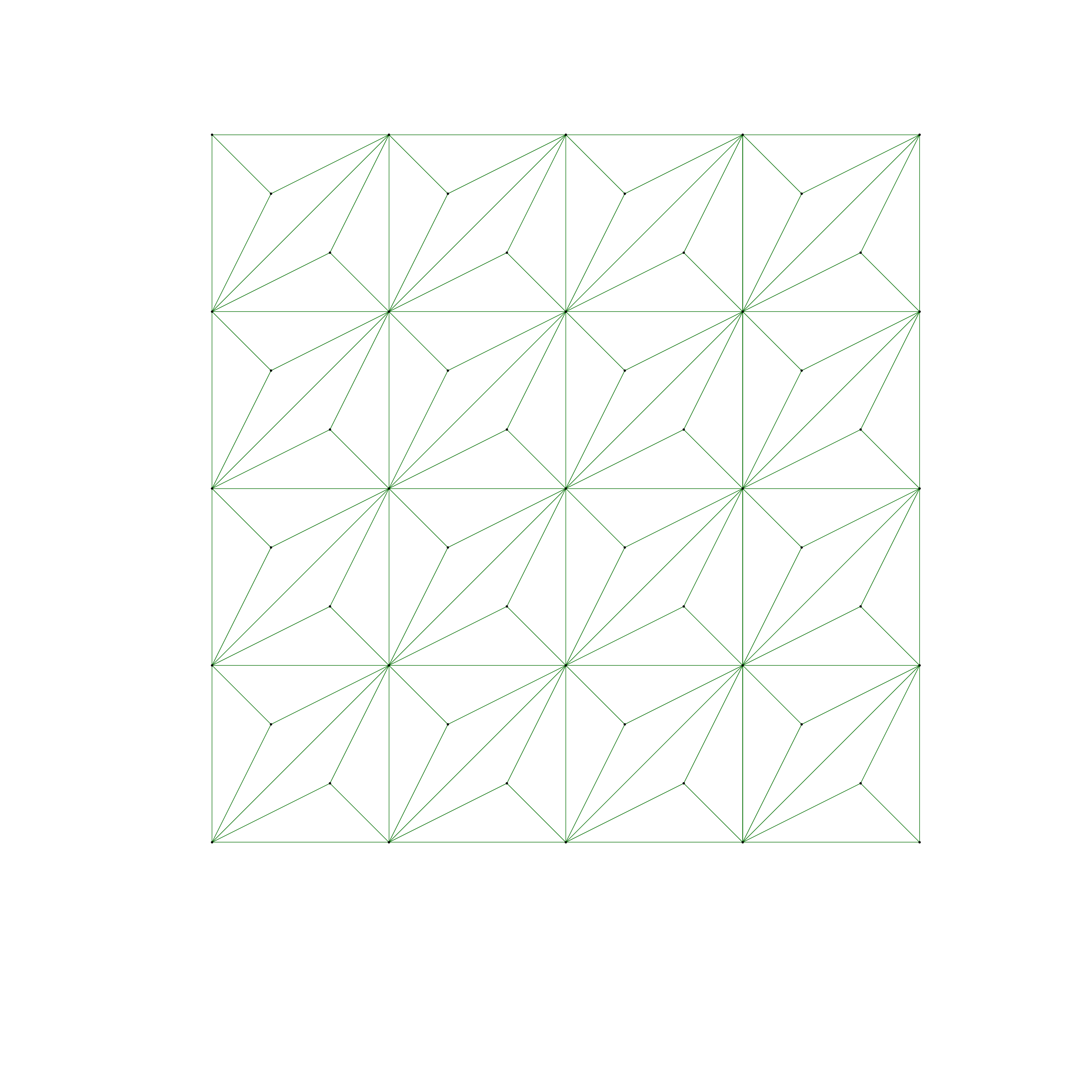}
		\end{minipage}%
	} 
    \subfigure[]{
    	\begin{minipage}[t]{0.3\linewidth}
    		\centering
    		\includegraphics[width=1.5in]{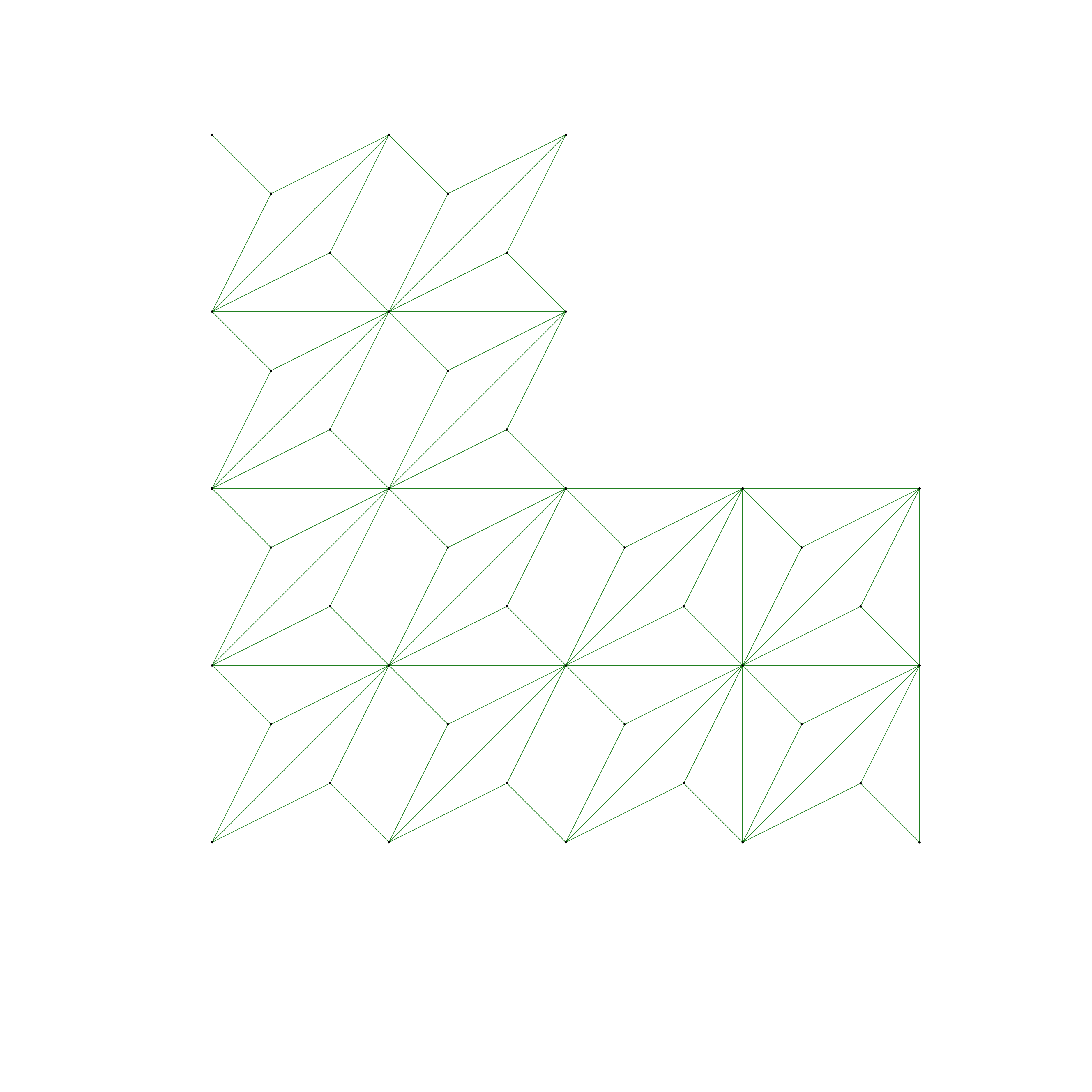}
    	\end{minipage}%
    }%
	\subfigure[]{
		\begin{minipage}[t]{0.3\linewidth}
			\centering
			\includegraphics[width=1.5in]{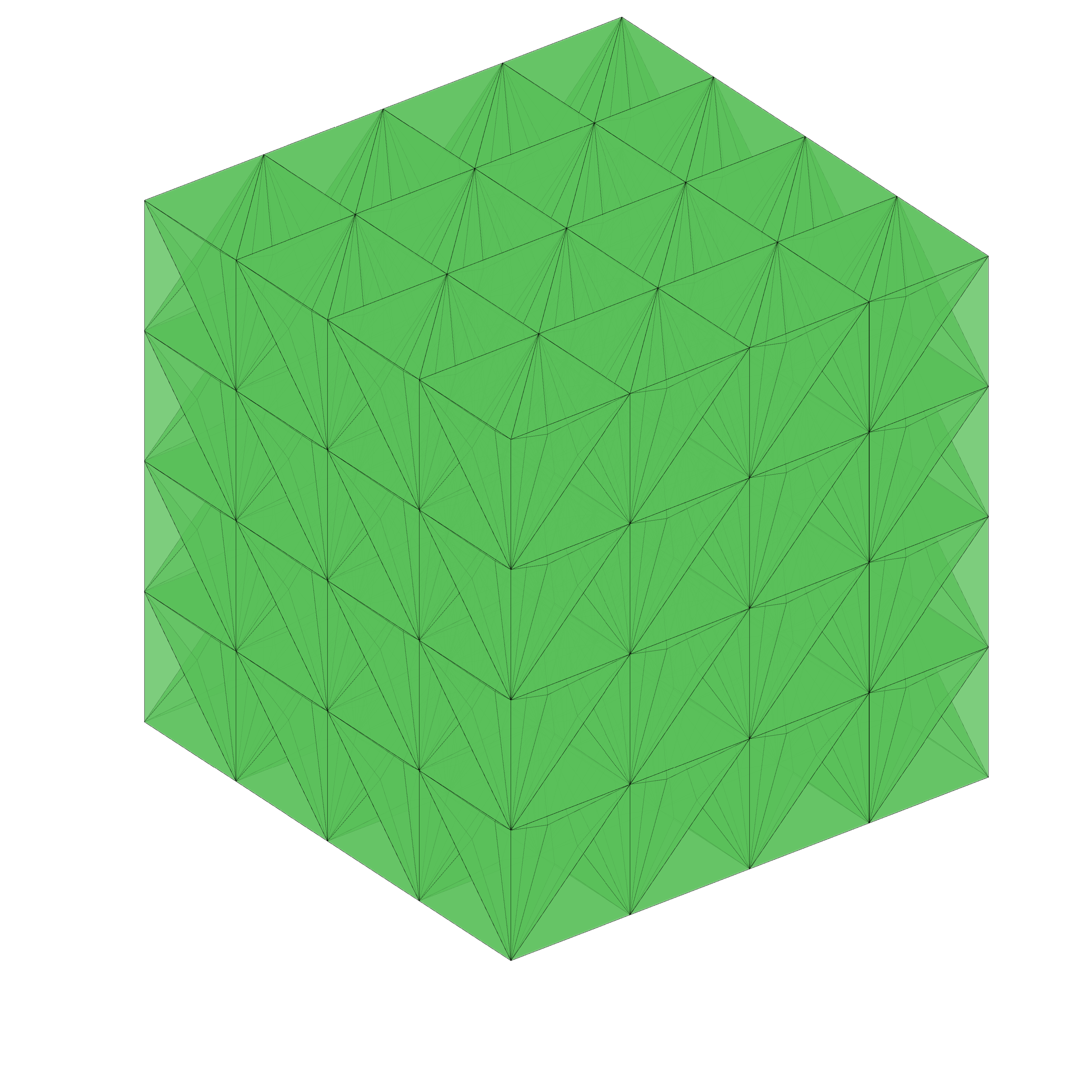}
		\end{minipage}%
	}%
	\centering
	\caption{Hseih-Clough-Toucher grids in different domains}
	\label{HCT}
\end{figure}

We report in Tables \ref{comparison} and \ref{comparison_k=2}(corresponding to polynomial degrees k=1,2, respectively) the first two vibration frequencies computed on HCT grids for Poisson ratios $\nu=0.35,0.49$ and 0.50 of AFW element and ours defined in section \ref{sec:The discrete problem}. Comparing with the values of regularity exponents $s_0$ given above, we observe that there is a double order of convergence for the vibration frequencies on both AFW element and ours. That is, in all cases we have $\alpha\simeq2min\{r,k+1\}\approx2s_0$, which corresponds to the best possible order convergence for this problem. We point out that the method is clearly locking-free.

In the following test, we apply the method to a problem with smooth eigenfunctions. So the rate of convergence becomes $\alpha\simeq2min\{r,k+1\}=2(k+1)$. For this purpose, we consider a homogeneous Dirichlet condition on the whole boundary. We present in Tables \ref{smooth_1} and \ref{smooth_2} the lowest vibration frequencies computed using different Poisson ratios $\nu=0.35$ and 0.50 (for $\nu=0.49$ the results are similar) on HCT grids of AFW element and ours when $k=1$. 
In this case, it can be clearly seen that, when using degree $k$, the order of convergence
is $2(k+1)$ as the theory predicts.
We present in Figure \ref{magnitude} plots of the first, third and fourth eigenfunctions of the spectral
problem obtained
with our method for $\nu=0.35,k=1$ and $N = 20$ in the unit square. The colors represent the magnitude of the
displacement $\mathbf{u}$ of the elastic structure.

For completeness, next we compare the 
	performance of the two schemes (AFW and Ours), where the AFW scheme 
	is used on a more conventional mesh (for instance, Left panel of \cite[Figure 1.]{XYY2020}) and the number of degrees of freedom of 
	the two methods are similar to each other. More precisely, we consider the AFW scheme on the conventional grids \cite[Figure 1.(Left panel)]{XYY2020} and our scheme (i.e. (\ref{aa})) on HCT grids. Let NE, NT denote the number of all grid edges and all elements in the unit square, respectively. A simple computation reveals the degrees of freedom of AFW scheme on the conventional grids \cite[Figure 1.(Left panel)]{XYY2020} is 6NE+15NT and the degrees of freedom of ours on HCT grids is 6NE+18NT when $k=1$. Therefore, inspired by \cite[Fig.4, Fig.7]{JA2018}, to make sure the number of degrees of freedom of 
	the two methods are similar to each other, we just need to let the number of all grid edges and all elements in the unit square be similar. 
	Then we also apply the two methods to a problem with smooth eigenfunctions, as what have been done in Tables \ref{smooth_1} and \ref{smooth_2}. 
	We present in Tables \ref{degrees of freedom of our scheme on HCT grids} and \ref{degrees of freedom of AFW on conventional grids} the lowest vibration frequencies computed on HCT grids for our scheme and on conventional grids (\cite[Figure 1.(Left panel)]{XYY2020}) for AFW scheme, respectively. The two schemes have similar degrees of freedom at each refinement level. For simplicity, we consider $\nu=0.35$ (for
	other values of $\nu$ the results are similar) and $k=1$. In this case, it can be seen that the order of convergence of both schemes
	is $2(k+1)$ as the theory predicts.
\begin{table}
	\footnotesize
	\centering
	\caption{Computed lowest vibration frequencies and convergence order of AFW element and ours on HCT grids for k=1 in the unitary square.}
	\label{comparison}
	\begin{tabular}{|c|ccccccc|}
		\hline
		&$\nu$  & N=5 & N=10 & N=15 & N=20 & $\alpha$ & Extra \\
		\hline
		\multirow{6}*{AFW} & 0.35 &  0.6803984	&0.6806595	&0.6807324	&0.6807654	&1.30	&0.68084
		\\
		& &1.6987527	&1.6991041	&1.6992042	&1.6992482	&1.29	&1.69935
		\\
		& 0.49 &0.6987399	&0.6991534	&0.6992891	&0.6993557	&1.16	&0.69952
		\\
		& &1.8356588	&1.8365383	&1.8368018	&1.8369218	&1.20	&1.83722
		\\
		& 0.5 &0.7007181	&0.7011933	&0.7013345	&0.7014042	&1.17	&0.70157
		\\
		& &1.8469392	&1.8478613	&1.8481387	&1.8482654	&1.19	&1.84858
		\\
		\hline
		\multirow{6}*{Ours} & 0.35 & 0.6796493	&0.680362	&0.6805600	&0.6806486	&1.31	&0.68084
		\\
		& &1.6978382	&1.6987575	&1.6990058	&1.6991143	&1.37	&1.69934
		\\
		& 0.49 &0.6969628	&0.6983811	&0.6988158	&0.6990212	&1.21 &0.69951
		\\
		& &1.8329877	&1.8353825	&1.8360905	&1.8364175	&1.21	&1.83721
		\\
		& 0.50 &0.6988048	&0.7003704	&0.7008283	&0.7010455	&1.21	&0.70156
		\\
		& &1.8440948	&1.8466189	&1.8473703	&1.8477188	&1.20	&1.84857\\
		\hline	
	\end{tabular}
\end{table}

\begin{table}
	\footnotesize
	\centering
	\caption{Computed lowest vibration frequencies and convergence order of AFW element and ours on HCT grids  for k=2 in the unitary square.}
	\label{comparison_k=2}
	\begin{tabular}{|c|ccccccc|}
		\hline
		&$\nu$ & N=5 & N=10 & N=15 & N=20 & $\alpha$ & Extra \\
		\hline
		\multirow{6}*{AFW} & 0.35 &0.6805874	&0.6808241	&0.6809177	&0.6809213	&1.31	&0.68100
		\\
		& &1.6989894	&1.6992701	&1.6993784	&1.6993826	&1.34	&1.69947
		\\
		& 0.49 &0.6992833	&0.6999822	&0.7002879	&0.7003012	&1.17	&0.70059
		\\
		& & 1.8365431	&1.8376107	&1.8380733	&1.8380935	&1.18	&1.83853
		\\
		& 0.5 &0.7013035	&0.7020556	&0.7023866	&0.7024011	&1.16	&0.70272
		\\
		& &1.8478974	&1.8490555	&1.8495613	&1.8495835	&1.17	&1.85007
		\\
		\hline
		\multirow{6}*{Ours} & 0.35 & 0.6798383	&0.6800750	&0.6801686	&0.6801704	&1.33	&0.68025
		\\
		& &1.6980749	&1.6983555	&1.6984639	&1.6984660	&1.36	&1.69855
		\\
		& 0.49 &0.6975062	&0.6982050	&0.6985108	&0.6985174	&1.20	&0.69880
		\\
		& &1.8338720	&1.8349396	&1.8354022	&1.8354123	&1.21	&1.83583
		\\
		& 0.50 &0.6993902	&0.7001423	&0.7004732	&0.7004805	&1.19	&0.70079
		\\
		& &1.8450530	&1.8462111	&1.8467170	&1.8467280	&1.20	&1.84719\\
		\hline	
	\end{tabular}
\end{table}

\begin{table}
	\footnotesize
	\centering
	\caption{Computed lowest vibration frequencies and convergence order of AFW element on HCT grids for k=1 with smooth eigenfunctions in the unitary square.}
	\label{smooth_1}
	\begin{tabular}{|c|cccccc|}
		\hline
		$\nu$& N=5 & N=10 & N=15 & N=20 & $\alpha$ & Extrapolated \\
		\hline
		\multirow{6}*{0.35}&4.1938369	&4.1931527	&4.1931125	&4.1931056	&3.86	&4.19310\\
		&4.1955110	&4.1932731	&4.1931373	&4.1931136	&3.81	&4.19310\\
		&4.3774386	&4.3725322	&4.3722447	&4.3721954	&3.86 	&4.37217\\
		&5.9433488	&5.9338900	&5.9332882	&5.9331824	&3.74	&5.93312\\
		&6.1723016	&6.1559710	&6.1549785	&6.1548066	&3.81	&6.15471\\
		&6.1818336	&6.1566702	&6.1551214	&6.1548523	&3.79	&6.15471\\	
		\hline
		\multirow{6}*{0.50}&4.1825398	&4.1774873	&4.1771847	&4.1771324	&3.83	&4.17710\\
		&5.5571180	&5.5426042	&5.5417184	&5.5415644	&3.80	&5.54148\\
		&5.5659647	&5.5432475	&5.5418492	&5.5416061	&3.79	&5.54147\\
		&6.5823754	&6.5408166	&6.5380437	&6.5375519	&3.67	&6.53726\\
		&7.2318856	&7.1723532	&7.1686115	&7.1679559	&3.76	&7.16759\\
		&7.5335711	&7.4669774	&7.4627546	&7.4620126	&3.74	&7.46160
		\\
		\hline
	\end{tabular}
\end{table}

\begin{table}
	\footnotesize
	\centering
	\caption{Computed lowest vibration frequencies and convergence order of Our element on HCT grids for k=1 with smooth eigenfunctions in the unitary square.}
	\label{smooth_2}
	\begin{tabular}{|c|cccccc|}
		\hline
		$\nu$& N=5 & N=10 & N=15 & N=20 & $\alpha$ & Extrapolated \\
		\hline
		\multirow{6}*{0.35}&4.1933998	&4.1931164	&4.1931029	&4.1931016	&4.24	&4.19310\\
		&4.1942253	&4.1931663	&4.1931101	&4.1931028	&4.05	&4.19310\\
		&4.3754374	&4.3724032	&4.3722193	&4.3721874	&3.81	&4.37217\\
		&5.9382535	&5.9335228	&5.9332037	&5.9331513	&3.66	&5.93312\\
		&6.1657333	&6.1555383	&6.1548926	&6.1547793	&3.74	&6.15472\\
		&6.1719278	&6.1560049	&6.1549882	&6.1548098	&3.73	&6.15471\\
		\hline
		\multirow{6}*{0.50}&4.1803763	&4.1773465	&4.1771572	&4.1771238	&3.76	&4.17711\\
		&5.5514705	&5.5422260	&5.5416434	&5.5415407	&3.75	&5.54148\\
		&5.5574321	&5.5426655	&5.5417335	&5.5415695	&3.75	&5.54148\\
		&6.5643373	&6.5395799	&6.5377965	&6.5374736	&3.55	&6.53726\\
		&7.2115804	&7.1708805	&7.1683155	&7.1678619	&3.75	&7.16761\\
		&7.5085037	&7.4652099	&7.4623984	&7.4618995	&3.71	&7.46161
		\\
		\hline
	\end{tabular}
\end{table}
\begin{figure}[htbp]
	\centering
	\subfigure{
		\begin{minipage}[t]{0.3\linewidth}
			\centering
			\includegraphics[width=1.2in]{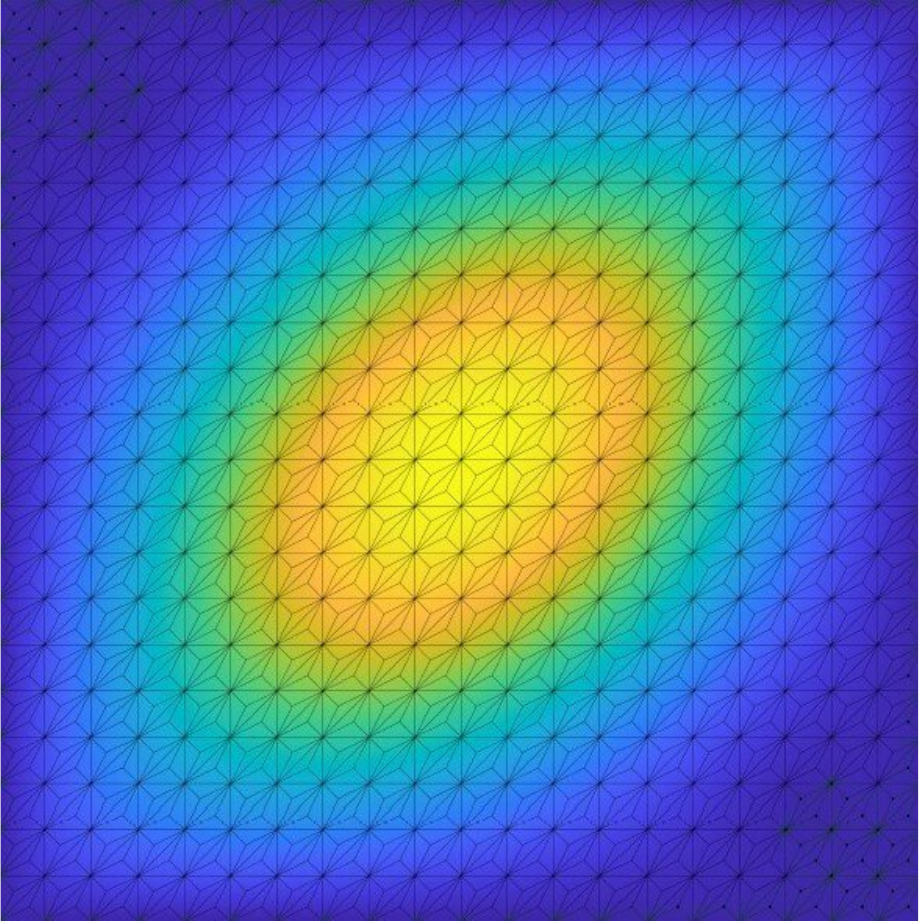}
		\end{minipage}%
	}%
	\subfigure{
		\begin{minipage}[t]{0.3\linewidth}
			\centering
			\includegraphics[width=1.2in]{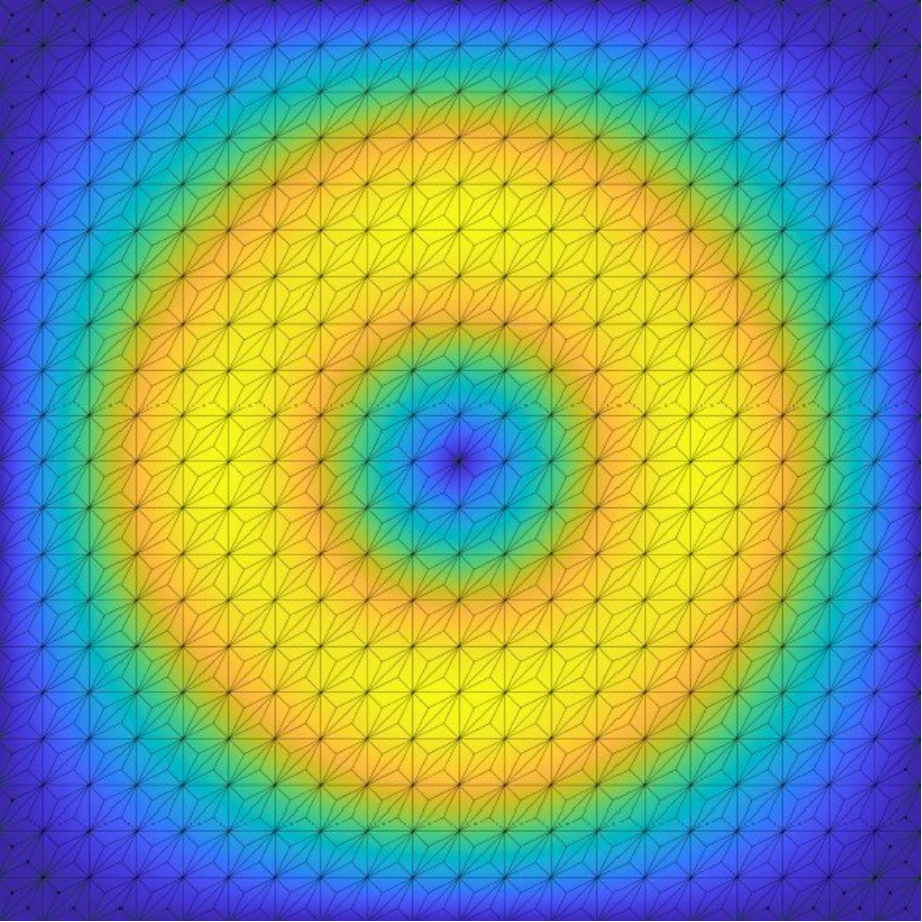}
		\end{minipage}%
	}%
	\subfigure{
		\begin{minipage}[t]{0.3\linewidth}
			\centering
			\includegraphics[width=1.2in]{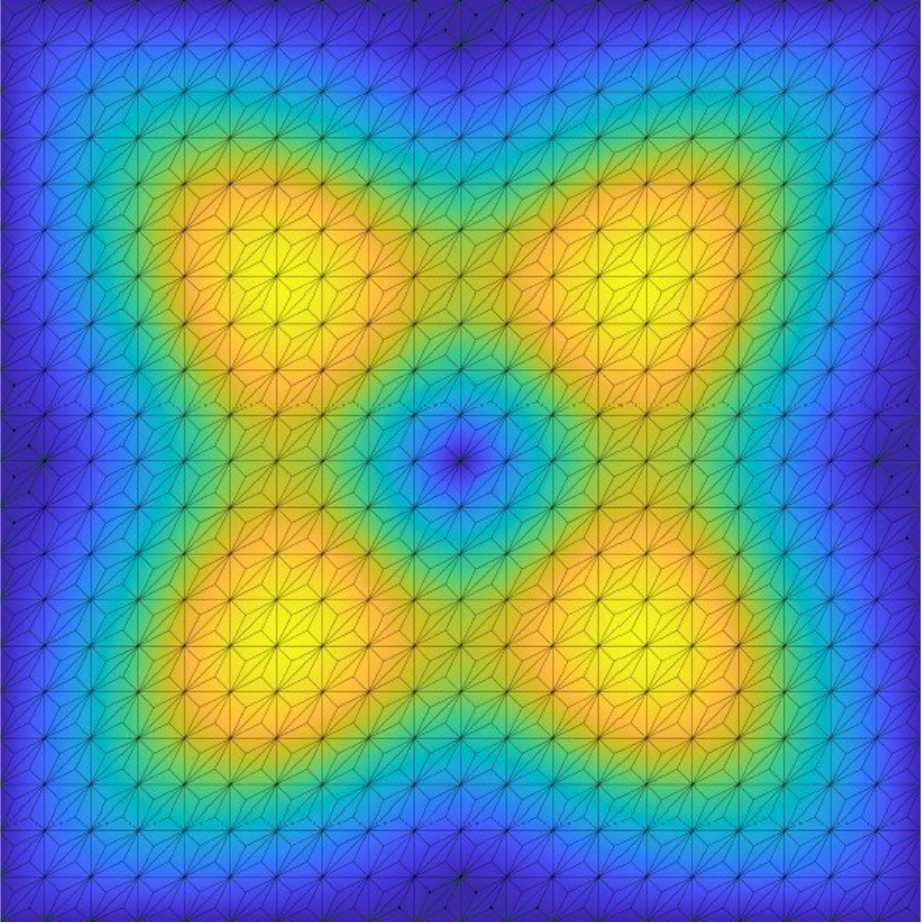}
		\end{minipage}
	}%
	\centering
	\caption{Eigenfunctions corresponding to the first (upper left), third (middle) and fourth (right) computed eigenvalues with $\nu=0.35$, $N=20$ and $k=1$.}
	\label{magnitude}
\end{figure}

\begin{table}[!htbp]
	\footnotesize
	\centering
	\caption{Computed lowest vibration frequencies and convergence order of our scheme on HCT grids for k=1, $\nu=0.35$ with smooth eigenfunctions in different degrees of freedom in the unitary square.}
	\label{degrees of freedom of our scheme on HCT grids}
	\begin{tabular}{|c|c|c|c|c|c|}
		\hline		
		\multicolumn{4}{|c|}{degrees of freedom of our scheme on HCT grids} &\multirow{2}*{$\alpha$}&\multirow{2}*{Extrapolated}\\
		\cline{1-4}
		\thead{4110\\(NE=235,\\NT=150)}&\thead{16320\\(NE=920,\\NT=600)}&\thead{36630\\(NE=2055,\\NT=1350)}&\thead{65040\\(NE=3640,\\NT=2400)}&&\\
		\hline
		4.193399817	&4.193116368	&4.193102866	&4.193101643	&4.24	&4.193100462\\
		4.194225282	&4.193166286	&4.193110074	&4.193102843	&4.05	&4.193097902\\
		4.375437405	&4.372403186	&4.372219332	&4.372187356	&3.81	&4.372170305\\
		5.938253523	&5.933522844	&5.933203689	&5.93315131	&3.66	&5.933115993\\
		6.165733255	&6.155538342	&6.154892607	&6.154779336	&3.74	&6.15471561\\
		6.17192779	&6.156004919	&6.154988186	&6.15480978	&3.73	&6.154708253\\
		\hline
	\end{tabular}
\end{table}

\begin{table}[!htbp]
	\footnotesize
	\centering
	\caption{Computed lowest vibration frequencies and convergence order of AFW scheme on conventional grids(i.e.\cite[Figure 1.(Left panel)]{XYY2020}) for k=1, $\nu=0.35$ with smooth eigenfunctions in different degrees of freedom in the unitary square.}
	\label{degrees of freedom of AFW on conventional grids}
	\begin{tabular}{|c|c|c|c|c|c|}
		\hline		
		\multicolumn{4}{|c|}{degrees of freedom of AFW scheme on conventional grids} &\multirow{2}*{$\alpha$}&\multirow{2}*{Extrapolated}\\
		\cline{1-4}
		\thead{3996\\(NE=261,\\NT=162)}&\thead{15768\\(NE=1008,\\NT=648)}&\thead{35316\\(NE=2241,\\NT=1458)}&\thead{66156\\(NE=4181,\\NT=2738)}&&\\
		\hline
		4.193298538	&4.193116168	&4.19310539	&4.193103309	&3.84	&4.19310247
		\\
		4.193748133	&4.193146848	&4.193111811	&4.193105241	&3.86	&4.193102528
		\\
		4.373476812	&4.372255511	&4.372188748	&4.372176946	&3.97	&4.372172121
		\\
		5.935976791	&5.93332532	&5.933172035	&5.933143966	&3.88	&5.933132079
		\\
		6.159325043	&6.155024477	&6.154785237	&6.154742745	&3.94	&6.154725001
		\\
		6.161835403	&6.155188189	&6.154817845	&6.154752032	&3.94	&6.154724504
		\\
		\hline
	\end{tabular}
\end{table}
\subsection{L-shaped domain}\label{sec: L-shaped domain}
In this subsection we consider a non-convex domain that we
will call the L-shaped domain which is defined by $\Omega_L=(0,1)^2\backslash[1/2,1]^2$. We also fixed at its top($\Gamma_0$) and free at the rest of the boundary($\Gamma_1$). The grids have been shown in (b) of Figure \ref{HCT}.
The eigenfunctions of this problem may present singularities due the reentrant
angles of the domain. In this case, the theoretical
order of convergence satisfies $2r\geq1.08$(see \cite{DFG2021} and the references therein). In Tables \ref{L_k=1}, we report the
first five vibration frequencies obtained with AFW element and ours, the respective order of
convergence and extrapolated values for $\nu=0.35$ (for
other values of $\nu$ the results are similar) and
$k=1$.

We observe from Table \ref{L_k=1} that both AFW element and ours provide a double order
of convergence for the vibration frequencies. Namely, in all cases we have $\alpha\simeq2min\{r,k+1\}$, which corresponds to the the best possible order of convergence for this problem.

\begin{table}
	\footnotesize
	\centering
	\caption{Computed lowest vibration frequencies and convergence order of AFW element on HCT grids for k=1, $\nu=0.35$ in the L-shaped domain.}
	\label{L_k=1}
	\begin{tabular}{|c|cccccc|}
		\hline
		$\nu$& N=8 & N=10 & N=14 & N=20 & $\alpha$ & Extrapolated \\
		\hline
		\multirow{6}*{AFW}&0.2848491	&0.2939103	&0.2976808	&0.3054181	&1.11	&0.31566\\
		&0.7589797	&0.7598993	&0.7609046	&0.7616218	&1.15	&0.76303\\
		&1.5525942	&1.5526532	&1.5527219	&1.5527684	&1.13	&1.55287\\
		&2.3258082	&2.3262582	&2.3267595	&2.3271234	&1.09	&2.32789\\
		&2.9934912	&2.9941597	&2.9959462	&2.9963987	&1.11	&2.99827
		\\	
		\hline
		\multirow{6}*{Ours}&0.2844861	&0.2846356	&0.2848004	&0.2849184	&1.13	&0.28516\\
		&0.7562699	&0.7577808	&0.7594440	&0.7606374	&1.13	&0.76303\\
		&1.5520395	&1.5522592	&1.5524828	&1.5526288	&1.41	&1.55285\\
		&2.3232704	&2.3244319	&2.3256113	&2.3264030	&1.38	&2.32764\\
		&2.9908387	&2.9921828	&2.9936368	&2.9946699	&1.18	&2.99664\\
		\hline
	\end{tabular}
\end{table}

For completeness, we compare the 
	performance of the two schemes (AFW and Ours), where the AFW scheme 
	is used on a more conventional mesh (i.e. \cite[Figure 1.(Middle panel)]{XYY2020}) and the number of degrees of freedom of 
	the two methods are similar to each other. 
	We report in Tables \ref{degrees of freedom of our scheme on HCT grids_L} and \ref{degrees of freedom of AFW on conventional grids_L} the first two lowest vibration frequencies of both schemes for $\nu=0.35$ (for
	other values of $\nu$ the results are similar) and $k=1$. In this case, it can be seen that $\alpha\simeq2min\{r,k+1\}$, which corresponds to the best possible order of convergence for this problem. 

We end this subsection presenting plots of the second, third, and fourth eigenfunctions obtained
with our method in the L-shaped domain in Figure \ref{L_magnitude}. In particular, we show the eigenfunctions computed with $\nu=0.35,k=1$ and $N = 20$.
\begin{figure}[htbp]
	\footnotesize
	\centering
	\subfigure{
		\begin{minipage}[t]{0.3\linewidth}
			\centering
			\includegraphics[width=1.2in]{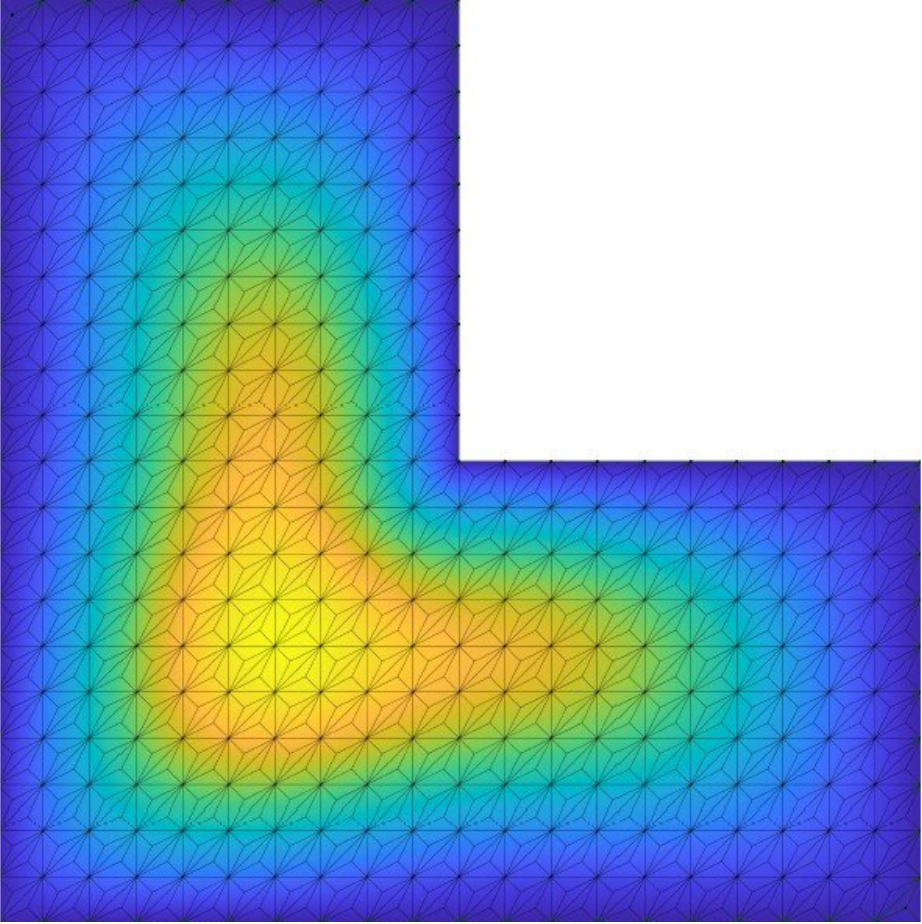}
		\end{minipage}%
	}%
	\subfigure{
		\begin{minipage}[t]{0.3\linewidth}
			\centering
			\includegraphics[width=1.2in]{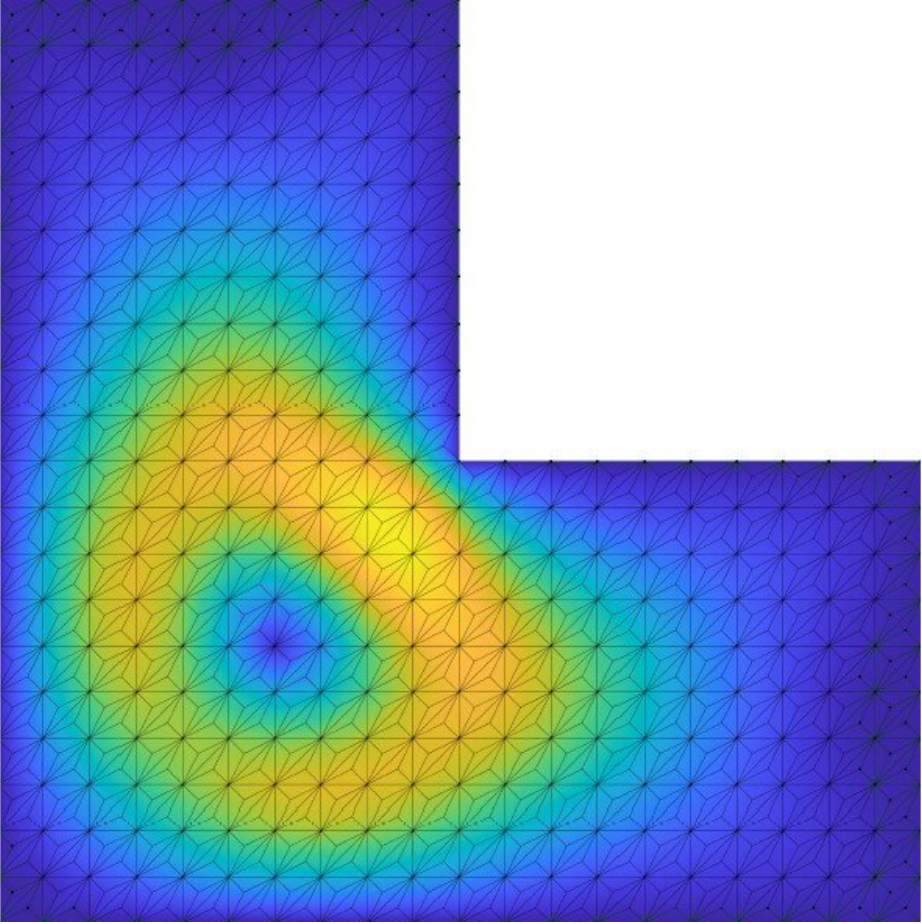}
		\end{minipage}%
	}%
	\subfigure{
		\begin{minipage}[t]{0.3\linewidth}
			\centering
			\includegraphics[width=1.2in]{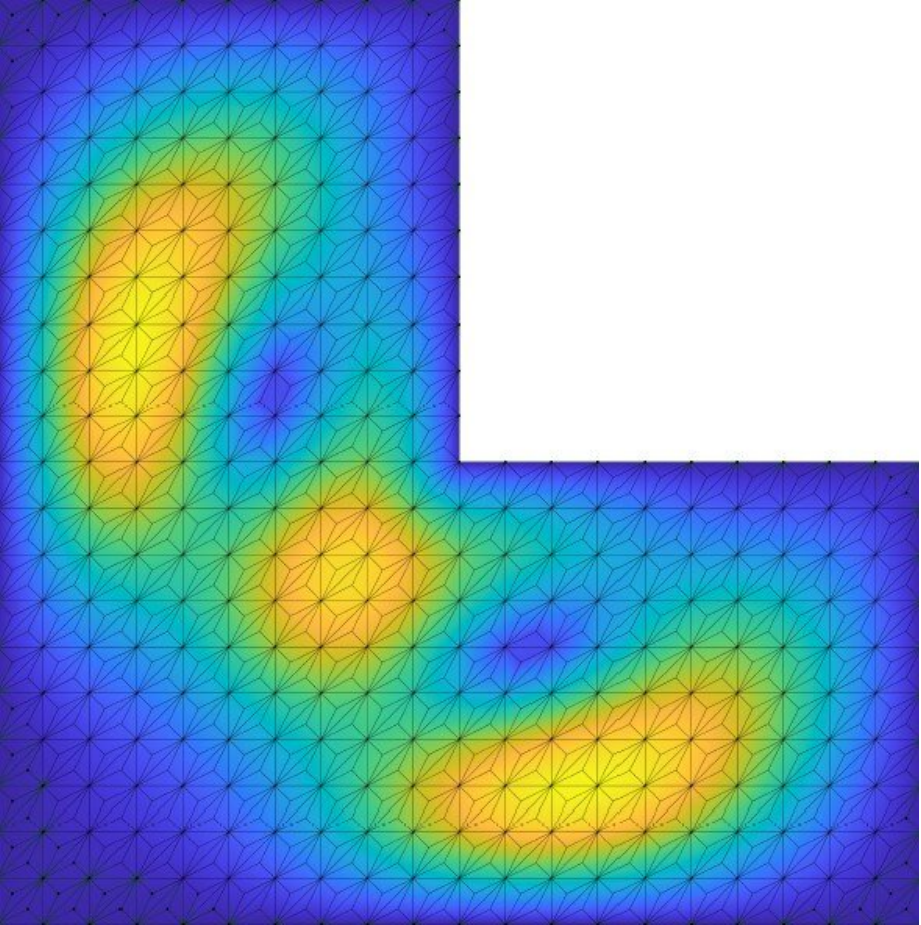}
		\end{minipage}
	}%
	\centering
	\caption{Eigenfunctions corresponding to the second (upper left), third (middle) and fourth (right) computed eigenvalues with $\nu=0.35$, $N=20$ and $k=1$.}
	\label{L_magnitude}
\end{figure}

\begin{table}[!htbp]
	\footnotesize
	\centering
	\caption{Computed lowest vibration frequencies and convergence order of our scheme on HCT grids for k=1, $\nu=0.35$ in different degrees of freedom in the L-shaped domain.}
	\label{degrees of freedom of our scheme on HCT grids_L}
	\begin{tabular}{|c|c|c|c|c|c|}
		\hline		
		\multicolumn{4}{|c|}{degrees of freedom of our scheme on HCT grids} &\multirow{2}*{$\alpha$}&\multirow{2}*{Extrapolated}\\
		\cline{1-4}
		\thead{7872\\(NE=448,\\NT=288)}&\thead{12270\\(NE=695,\\NT=450)}&\thead{23982\\(NE=1351,\\NT=882)}&\thead{48840\\(NE=2740,\\NT=1800)}&&\\
		\hline
		0.284486101	&0.284635552	&0.284800373	&0.284918353	&1.13	&0.285155458\\
		0.756269944	&0.757780789	&0.759443985	&0.760637362	&1.13	&0.763034178\\
		\hline
	\end{tabular}
\end{table}

\begin{table}[!htbp]
	\footnotesize
	\centering
	\caption{Computed lowest vibration frequencies and convergence order of AFW scheme on conventional grids(i.e.\cite[Figure 1.(Middle panel)]{XYY2020}) for k=1, $\nu=0.35$ in different degrees of freedom in the L-shaped domain.}
	\label{degrees of freedom of AFW on conventional grids_L}
	\begin{tabular}{|c|c|c|c|c|c|}
		\hline		
		\multicolumn{4}{|c|}{degrees of freedom of AFW scheme on conventional grids} &\multirow{2}*{$\alpha$}&\multirow{2}*{Extrapolated}\\
		\cline{1-4}
		\thead{7224\\(NE=469,\\NT=294)}&\thead{11880\\(NE=765,\\NT=486)}&\thead{24648\\(NE=1573,\\NT=1014)}&\thead{47088\\(NE=2988,\\NT=1944)}&&\\
		\hline
		0.284920877	&0.284975711	&0.285033671	&0.285068764	&1.08	&0.285152955
		\\
		0.76038081	&0.761043364	&0.761730094	&0.762141723	&1.12	&0.763073773
		\\
		\hline
	\end{tabular}
\end{table}

\subsection{Unitary cube}\label{sec: Unitary cube}
In the following test, we consider the unitary cube 
$\Omega=(0,1)^3$ and the lowest order Arnold-Falk-Winther
finite element spaces (i.e. $\underline{\bm{\mathit{p}}}^{1}(K)$, $\bm{\mathit{p}}^0(K),\underline{\bm{\mathit{A}}}^{0}(K)$. Notice that all analyses in section \ref{sec:The discrete problem} and \ref{sec: Spectral approximation} can cover this case, see Remarks \ref{apply to other elements} and \ref{apply to others} for more details). The grids have been shown in (c) of Figure \ref{HCT}.
In Table \ref{3D}, we report the computed lowest vibration frequencies from $N=2$ to $N=5$ for different Poisson ratios. We cannot compute more refinement of the grids to make the convergence order stable because of the computer memory limitation. 
\begin{table}
	\footnotesize
	\centering
	\caption{Computed lowest vibration frequencies of the lowest order AFW element in the unitary cube.}
	\label{3D}
	\begin{tabular}{|c|cccc|}
		\hline
		$\nu$ & N=2 & N=3 & N=4 & N=5 \\
		\hline
		\multirow{6}*{0.35}&4.966970743	&4.710164711	&4.608767817	&4.558358596\\
		&5.088532532	&4.784707885	&4.657825328	&4.592680299\\
		&5.088532532	&4.784707885	&4.657825328	&4.592680299\\
		&5.717989249	&5.337325264	&5.106323175	&4.990902401\\
		&5.868937316	&5.446359377	&5.175519032	&5.037597398\\
		\hline
		\multirow{6}*{0.50}&5.510608838	&5.098093559	&4.878228891	&4.767062214\\
		&5.646994176	&5.206697614	&4.946591774	&4.812860194\\
		&5.646994176	&5.206697614	&4.946591774	&4.812860194\\
		&6.565845408	&6.425771429	&6.100930923	&5.917027855\\
		&7.561862492	&6.569398314	&6.275577054	&6.079412616\\
		\hline
	\end{tabular}
\end{table}
\subsection{Some details of the nonlinear eigenvalue problem}\label{sec: Some details of the nonlinear eigenvalue problem} In section \ref{sec: Hybridization}, we introduce a hybridization to reduce the mixed method to a condensed nonlinear eigenproblem. Now we compare global degree freedom with different methods and provide algorithmic strategies for solution of the nonlinear eigenproblem (\ref{1217l}).

\subsubsection{Comparison of global degree freedom with different methods}\label{sec: Comparison of global degree freedom with different methods} In this subsection, we give the
numerical comparison of global degree freedom for the element of \cite{SDR2013,SDR2019}, our element described in section \ref{sec:The discrete problem} and the condensed nonlinear eigenproblem in Tables \ref{global degree freedom 2D} and \ref{global degree freedom 3D}. Observing Tables \ref{global degree freedom 2D} and \ref{global degree freedom 3D}, we find though the global degree freedom of our element described in section \ref{sec:The discrete problem} is larger than that in \cite{SDR2013,SDR2019}, the degree freedom of the condensed nonlinear eigenvalue problem after hybridization is much smaller than
the other two methods.
\begin{table}
	\footnotesize
	\centering
	\caption{Global degree freedom of trigonal finite element spaces with different methods.}
	\label{global degree freedom 2D}
	\begin{tabular}{|c|c|c|c|c|c|}
		\hline
		Method	&Formula  &k=1  &k=2  &k=3  &k=4    \\
		\hline
		\thead{ \cite{SDR2013,SDR2019}($\underline{\bm{\mathit{p}}}^{k+1}(K), \underline{\bm{\mathit{A}}}^{k}(K)$)}&\thead{$2(k+3)(k+2)$+$(k+2)(k+1)/2$}  &27  &46  &70  &99    \\
		\hline
		\thead{Our element($\underline{\bm{\mathit{p}}}^{k+1}(K)$,\\$\bm{\mathit{p}}^k(K),\underline{\bm{\mathit{A}}}^{k+1}(K)$)}	&\thead{$2(k+3)(k+2)+(k+2)(k+1)$\\+$(k+3)(k+2)/2$}  &36  &62  &95  &135    \\
		\hline
		\thead{Nonlinear eigenproblem \\($\bm{\mathit{p}}^k(F),F\in\partial K$)}	&$3(k+2)$  &9  &  12& 15 & 18   \\
		\hline
	\end{tabular}
\end{table}
\begin{table}
	\footnotesize
	\centering
	\caption{Global degree freedom of tetrahedral finite element spaces with different methods.  Here $p_k$ denotes $dim(\underline{\bm{\mathit{p}}}^{k+1}(K))=3(k+4)(k+3)(k+2)/2.$}
	\label{global degree freedom 3D}
	\begin{tabular}{|c|c|c|c|c|c|}
		\hline
		Method	&Formula  &k=1  &k=2  &k=3  &k=4    \\
		\hline
		\thead{ \cite{SDR2013,SDR2019}($\underline{\bm{\mathit{p}}}^{k+1}(K), \underline{\bm{\mathit{A}}}^{k}(K)$)}&\thead{$p_k+(k+3)(k+2)(k+1)/2$}  &102  &210  &375  &609    \\
		\hline
		\thead{Our element($\underline{\bm{\mathit{p}}}^{k+1}(K)$,\\$\bm{\mathit{p}}^k(K),\underline{\bm{\mathit{A}}}^{k+1}(K)$)}	&\thead{$p_k+(k+3)(k+2)(k+1)/2$\\+$(k+4)(k+3)(k+2)/2$}  &132  &270  &480  &777    \\
		\hline
		\thead{Nonlinear eigenproblem \\($\bm{\mathit{p}}^k(F),F\in\partial K$)}	&$2(k+3)(k+2)$  &24  &40 &60 &84   \\
		\hline
	\end{tabular}
\end{table}

\subsubsection{Algorithmic strategies}\label{sec: Algorithmic strategies} In this subsection, we discuss an algorithm for solution of the nonlinear eigenproblem (\ref{1217l}) based on an accurate initial approximation. We obtain the accurate initial approximation by solving a standard eigenproblem, namely, the perturbed problem analyzed in section \ref{sec: The perturbed eigenvalue problem}. Therefore, Newton's method is well suited for solving (\ref{1217l}), as discussed below. Similar algorithmic strategies have been shown in \cite{CJL2010}.

Hybridization of (\ref{1217a}) gives rise to the nonlinear eigenproblem (\ref{1217l}). We will recast this as a problem of finding the
	zero of a differentiable function and apply the Newton iteration. Define the operator $\bm{\mathit{F}}:\bm{\mathit{M}}^h\mapsto\bm{\mathit{M}}^h$ ($\bm{\mathit{M}}^h$ is defined in section \ref{sec: Hybridization}) by
	$\left \langle \bm{\mathit{F}}\bm{\mu}, \bm{\gamma}\right \rangle=a_h(\bm{\mu}, \bm{\gamma}),$ for all $\bm{\mu}, \bm{\gamma}\in\bm{\mathit{M}}^h,$
	where $\left \langle \cdot, \cdot\right \rangle$ denotes the $\bm{\mathit{L}}^2(\varepsilon_h)$-innerproduct. Also, define $\bm{\mathit{M}}(\lambda)$ to be the operator-valued function of $\lambda$ given by (see section \ref{sec: Hybridization} for the definitions of $\bm{\mathcal{Q}}_2,\bm{\mathcal{Q}}_2^L$)
	\begin{equation}
		\label{1220k}
		\left \langle \bm{\mathit{M}}(\lambda)\bm{\mu}, \bm{\gamma}\right \rangle=((\bm{\mathit{I}}-\lambda\bm{\mathcal{Q}}_2^L)^{-1}\bm{\mathcal{Q}}_2\bm{\mu},\bm{\mathcal{Q}}_2\bm{\gamma}).
	\end{equation}
	The nonlinear eigenproblem then takes the following form: Find $\bm{\beta}\in\bm{\mathit{M}}^h$ and $\lambda>0$ satisfying
	\begin{equation}
		\label{1220i}
		\bm{\mathit{H}}(\bm{\beta},\lambda)\equiv
		\begin{pmatrix}
			\, \bm{\mathit{F}}\bm{\beta}-\lambda\bm{\mathit{M}}(\lambda)\bm{\beta}   \,\  \\
			\, \left \langle \bm{\beta}, \bm{\beta}\right \rangle-1 
		\end{pmatrix}=\mathbf{0}.
	\end{equation}
	The first equation of above system is the same as (\ref{1217l}), while the second is a normalization condition. We apply Newton's method to solve (\ref{1220i}). Calculating the Fr$\rm \acute{e}$chet derivative of $\bm{\mathit{H}}$ at an arbitrary $(\bm{\beta},\lambda)$ and writing down the Newton iteration, we find that the next iteration $(\bm{\beta}',\lambda')$ is defined by
	\begin{equation}
		\label{1220j}
		\begin{pmatrix}
			\, (\bm{\mathit{F}}-\lambda\bm{\mathit{M}}(\lambda))(\bm{\beta}'-\bm{\beta})-(\lambda'-\lambda)\bm{\mathit{N}}(\lambda)\bm{\beta}   \,\  \\
			\, 2\left \langle \bm{\beta}, \bm{\beta}'-\bm{\beta}\right \rangle 
		\end{pmatrix}=-\bm{\mathit{H}}(\bm{\beta},\lambda),
	\end{equation}
	where $\bm{\mathit{N}}(\lambda)=\bm{\mathit{M}}(\lambda)+\lambda\frac{d\bm{\mathit{M}}(\lambda)}{d\lambda}.$  We also have
	$\left \langle \bm{\mathit{N}}(\lambda)\bm{\mu}, \bm{\gamma}\right \rangle=((\bm{\mathit{I}}-\lambda\bm{\mathcal{Q}}_2^L)^{-2}\bm{\mathcal{Q}}_2\bm{\mu},\bm{\mathcal{Q}}_2\bm{\gamma}). $
	Combining (\ref{1220i}), (\ref{1220j}) can be rewritten as
	\begin{subequations}
		\label{1220m}
		\begin{align}		
			(\bm{\mathit{F}}-\lambda\bm{\mathit{M}}(\lambda))\bm{\beta}'&=(\lambda'-\lambda)\bm{\mathit{N}}(\lambda)\bm{\beta}, \label{1220m_a}\\
			\left \langle \bm{\beta}', \bm{\beta}\right \rangle&=1 \label{1220m_b}
		\end{align}	
	\end{subequations}
	Observe that (\ref{1220m_a}) implies that $\bm{\beta}'$ depends linearly on $\lambda'-\lambda$. Hence we can decouple the above system and rearrange the computations, as stated in Algorithm \ref{alg:A}. Step 1 of Algorithm \ref{alg:A} gives good initial approximations, as already established
	in Theorem \ref{1217jj}. The value of $\delta_{\lambda}$, equaling the difference of successive eigenvalue
	iterations, is determined by (\ref{1220m_b}).
	\begin{algorithm}
		\caption{Newton's method for solving the nonlinear eigenproblem}  
		\label{alg:A}
		To solve for a nonlinear eigenvalue and eigenfunction satisfying (\ref{1217l}), proceed as follows:
		\begin{itemize}
			\item[1.] First obtain an initial approximation $\bm{\beta}_0$ and $\lambda_0$ by solving the linear eigenproblem
			$\bm{\mathit{F}}\bm{\beta}_0=\lambda_0\bm{\mathit{M}}(0)\bm{\beta}_0.$
			\item[2.] For $n=0,1,2,...,$ until convergence, perform the steps as follows:
			\begin{itemize}
				\item[(a)] Compute $\hat{\bm{\beta}}$ by solving the linear system
				\begin{equation}
					\label{1220n}
					(\bm{\mathit{F}}-\lambda_n\bm{\mathit{M}}(\lambda_n))\hat{\bm{\beta}}=\bm{\mathit{N}}(\lambda_n)\bm{\beta}_n.	
				\end{equation}
				\item[(b)] Set $\delta_{\lambda}=1/\left \langle \hat{\bm{\beta}}, \bm{\beta}_n\right \rangle$.
				\item[(c)] Update the eigenvalue: $\lambda_{n+1}=\lambda_n+\delta_{\lambda}.$
				\item[(d)] Update the nonlinear eigenfunction: $\bm{\beta}_{n+1}=\delta_{\lambda}\hat{\bm{\beta}}.$
			\end{itemize}
		\end{itemize} 
\end{algorithm}

\subsection{Computational costs of the compared methods}\label{sec: Computational costs of the compared methods} Recall that we use MATLAB 2020a to compute on a DELL Precision 3630 Tower with 32G memory. We compare the computational costs of our element with that of the conventional AFW element for $k=1$ and different Poisson's ratio $\nu$ on HCT grids in the unitary square and L-shaped domain in Tables \ref{Computational cost of unitary square} and \ref{Computational cost of L shaped domain}. Observing Tables \ref{Computational cost of unitary square} and \ref{Computational cost of L shaped domain}, we find that the computational costs of both methods are similar but the computational cost of our element is a little larger than that of AFW element. In fact, this is to be expected since the global degree freedom of our element described in section \ref{sec:The discrete problem}($\underline{\bm{\mathit{p}}}^{k+1},\bm{\mathit{p}}^k,\underline{\bm{\mathit{A}}}^{k+1}$) is larger than that of AFW element($\underline{\bm{\mathit{p}}}^{k+1},\bm{\mathit{p}}^k,\underline{\bm{\mathit{A}}}^{k}$).
\begin{table}
	\footnotesize
	\centering
	\caption{Computational cost(times($s$)) of  AFW element and ours for $k=1$ and different Poisson's ratio in the unitary square}
	\label{Computational cost of unitary square}
	\begin{tabular}{|c|ccccc|}
		\hline
		$\nu$ &Method &N=5  &N=10  &N=15  &N=20  \\
		\hline
		$0.35$&AFW  &2.346391
		&68.712579
		&405.645936
		&2167.479581
		\\
		&Ours  &2.936615
		&86.011346
		&518.568172
		&2527.466308
		\\
		\hline
		$0.50$&AFW  &4.237313
		&74.888692
		&475.002422
		&4371.372905
		\\
		&Ours  &5.289374
		&92.785181
		&621.320673
		&5788.094644
		\\
		\hline
	\end{tabular}
\end{table}
\begin{table}[H]
	\footnotesize
	\centering
	\caption{Computational cost(times($s$)) of AFW element and ours for $k=1$ and different Poisson's ratio in the L-shaped domain}
	\label{Computational cost of L shaped domain}
	\begin{tabular}{|c|ccccc|}
		\hline
		$\nu$ & Method &N=8  &N=10  &N=14  &N=20  \\
		\hline
		$0.35$&AFW  &10.394203
		&33.732006
		&151.975342
		&723.244933
		\\
		&Ours  &13.279231
		&41.996868
		&197.698534
		&903.910458
		\\
		\hline
		$0.50$&AFW  &15.50252
		&39.449416
		&179.962158
		&874.177556
		\\
		&Ours  &19.691046
		&50.040814
		&228.446985
		&1167.807067
		\\
		\hline
	\end{tabular}
\end{table}

\bibliographystyle{siamplain}
\bibliography{references2}

\end{document}